% ----------------------------------------------------------------
% AMS-LaTeX Paper ************************************************
% **** -----------------------------------------------------------

%\documentclass[10pt]{amsart}

\documentclass[a4paper,11pt]{amsart}
\usepackage[left=2.7cm,right=2.7cm,top=3.5cm,bottom=3cm]{geometry}

\usepackage{amssymb,latexsym,amsmath,amsthm,amscd}
\usepackage{stmaryrd}

\usepackage{graphicx}
\usepackage{dsfont}
\usepackage{longtable,tabu}
\usepackage[all]{xy}
\usepackage{mathrsfs}
\usepackage{color}
\definecolor{Blue}{rgb}{0.3,0.3,0.9}
\usepackage{array}
\usepackage{tabu}
\usepackage{hyperref}

\tabulinesep=1mm

\usepackage[T2A,OT1]{fontenc}
\DeclareSymbolFont{cyrillic}{T2A}{cmr}{m}{n}
\DeclareMathSymbol{\Sha}{\mathalpha}{cyrillic}{216}

%\input{amssymbol}

%\voffset = -20pt \hoffset = -60pt \textwidth = 460pt \textheight
%=651pt \headheight = 12pt \headsep = 20pt

% ----------------------------------------------------------------
\newcommand{\sk}{\vspace{0.1in}}
\vfuzz2pt % Don't report over-full v-boxes if over-edge is small
\hfuzz2pt % Don't report over-full h-boxes if over-edge is small
% THEOREMS -------------------------------------------------------

\newtheorem{thm}{Theorem}[section]
\newtheorem{pro-thm}[thm]{Main Theorem}%[section]
\newtheorem{def-thm}[thm]{Definition-Theorem}
\newtheorem{cor}[thm]{Corollary}
\newtheorem{lem}[thm]{Lemma}
\newtheorem{lm}[thm]{Lemma}
\newtheorem{prop}[thm]{Proposition}

\newtheorem{conj}[thm]{Conjecture}

\newtheorem*{mainthmA}{Theorem~A}
\newtheorem*{mainthmB}{Corollary~B}
\newtheorem*{mainthmC}{Corollary~C}
%[section]
\theoremstyle{definition}

\theoremstyle{remark}
\newtheorem{rem}[thm]{Remark}
\newtheorem{intro-rem}{Remark}
\numberwithin{equation}{section}
\newtheorem{question}[thm]{Question}

% MATH -----------------------------------------------------------

\newcommand{\abs}[1]{\left\vert#1\right\vert}

\newcommand{\eps}{\varepsilon}

\newcommand{\pp}{\mathfrak{p}}

\newcommand{\bQ}{\mathbf{Q}}
\newcommand{\bZ}{\mathbf{Z}}
\newcommand{\bC}{\mathbf{C}}

  %automorphic forms

\def\cF{{\mathcal F}}  %Hida family

\def\cH{{\mathcal H}}

\def\cK{{\mathcal K}}  %imaginary quadratic field

\def\cO{\mathcal O}

\def\cW{{\mathcal W}}

\def\cX{\mathcal X}

\def\cQ{\mathcal Q}

\newcommand{\Q}{\mathbf{Q}}
\newcommand{\Z}{\mathbf{Z}}

\def\Qp{\Q_p}
\def\Qbar{\ol{\Q}}
\def\Zbar{\ol{\Z}}

\def\Zp{\Z_p}

%Reserved Key words
\def\makeop#1{\expandafter\def\csname#1\endcsname
	{\mathop{\rm #1}\nolimits}\ignorespaces}
\makeop{Hom}   \makeop{End}   \makeop{Aut}   %\makeop{Isom}
\makeop{Pic} \makeop{Gal}       \makeop{Div} \makeop{Lie}
\makeop{PGL}   \makeop{Corr} \makeop{PSL} \makeop{sgn} \makeop{Spf}
\makeop{Tr} \makeop{Nr} \makeop{Fr} \makeop{disc}
\makeop{Proj} \makeop{supp} \makeop{ker}   \makeop{Im} \makeop{dom}
\makeop{coker} \makeop{Stab} \makeop{SO} \makeop{SL} \makeop{SL}
\makeop{Cl}    \makeop{cond} \makeop{Br} \makeop{inv} \makeop{rank}
\makeop{id}    \makeop{Fil} \makeop{Frac}  \makeop{GL} \makeop{SU}
\makeop{Trd}   \makeop{Sp} \makeop{Tr}    \makeop{Trd} \makeop{Res}
\makeop{ind} \makeop{depth} \makeop{Tr} \makeop{st} \makeop{Ad}
\makeop{Int} \makeop{tr}    \makeop{Sym} \makeop{can} \makeop{SO}
\makeop{torsion} \makeop{GSp} \makeop{Tor}\makeop{Ker} \makeop{rec}
\makeop{Ind} \makeop{Coker}
\makeop{vol} \makeop{Ext} \makeop{gr} \makeop{ad}
\makeop{Gr}\makeop{corank} \makeop{Ann}
\makeop{Hol} %Holomorphic
\makeop{Fitt} \makeop{Mp} \makeop{CAP}

%\makeop{Sel}
\def\Sel{\mathrm{Sel}}

\def\Ord{{\mathrm{ord}}}
%\def\Ord{{\operatorname{ord}}}

% abbreviation
  %%CM type

\def\ndivides{\nmid}

\def\x{{\times}}

 % episilon factor
\def\al{\alpha}

\def\Lam{\Lambda}

\def\om{\omega}
\def\dirlim{\varinjlim}
\def\prolim{\varprojlim}
\def\iso{\simeq}
\def\con{\equiv}

\newcommand\stt[1]{\left\{#1\right\}}
\newcommand{\beqcd}[1]{\begin{equation*}\label{#1}\tag{#1}}
\newcommand{\eeqcd}{\end{equation*}}
\def\ep{\epsilon}

\def\lam{\lambda}
 % automotphic induction

\def\sg{\sigma}

\def\parref#1{\ref{#1}}
\def\thmref#1{Theorem~\parref{#1}}
\def\expectthmref#1{Expected theorem~\parref{#1}}
\def\propref#1{Proposition~\parref{#1}}
\def\corref#1{Corollary~\parref{#1}}     \def\remref#1{Remark~\parref{#1}}
\def\secref#1{\S\parref{#1}}
\def\lmref#1{Lemma~\parref{#1}}

\newcommand{\dBr}[1]{\llbracket{#1}\rrbracket}

\newcommand{\cR}{\mathbb{I}}

\newcommand{\pwseries}[1]{[[#1]]}

\def\k{\kappa}

\newcommand{\bV}{\mathbb{V}}
\newcommand{\da}{\dagger}
\newcommand{\bff}{{\boldsymbol{f}}}
\newcommand{\bfg}{{\boldsymbol{g}}}
\newcommand{\bfh}{{\boldsymbol{h}}}

\def\bfkappa{{\boldsymbol \kappa}}

\newcommand{\pair}[2]{\langle #1, #2\rangle}
\newcommand{\pairing}{\pair{\,}{\,}}

% ----------------------------------------------------------------
\begin{document}
	
\title
%[The non-vanishing of generalized Kato classes for elliptic curves of rank $2$]
{On the non-vanishing of generalized Kato classes for elliptic curves of rank $2$}
\author[F.~Castella]{Francesc Castella}
\author[M.-L.~Hsieh]{Ming-Lun Hsieh}
	
\thanks{The first author was partially supported by NSF grant DMS-1801385.}
\thanks{The second author was partially supported by a MOST grant 108-2628-M-001-009-MY4 .}
\subjclass[2010]{Primary 11G05; Secondary 11G40}
\keywords{Elliptic curves, Birch and Swinnerton-Dyer conjecture, $p$-adic families of modular forms, $p$-adic $L$-functions, Euler systems}	
\date{\today}
		
%\address[]{Department of Mathematics, Princeton University, Princeton, NJ 08544, USA}
%\email{fcabello@math.princeton.edu}
\address[]{Department of Mathematics, University of California Santa Barbara, CA 93106, USA}
\email{castella@ucsb.edu}
\address[]{Institute of Mathematics, Academia Sinica, Taipei 10617, Taiwan}
\email{mlhsieh@math.sinica.edu.tw}

%\dedicatory{Dedicated to Jean-Pierre Serre}

%\commby{}

% ----------------------------------------------------------------

\begin{abstract}
We prove the first cases of a conjecture by Darmon--Rotger on the non-vanishing of generalized Kato classes attached to elliptic curves $E$ over $\bQ$ of rank $2$. Our method also shows that the non-vanishing of generalized Kato classes implies that the
$p$-adic Selmer group of $E$ is $2$-dimensional. The main novelty in the proof is a formula 
%the generalized Kato classes in terms of 
for the leading term at the trivial character of an anticyclotomic $p$-adic $L$-function attached to $E$ 
%over an auxiliary imaginary quadratic field $K$ 
in terms of the derived $p$-adic height of generalized Kato classes and an enhanced $p$-adic regulator.

%In particular, this gives a new construction of Selmer classes for elliptic curves of rank $2$.
\end{abstract}

% ----------------------------------------------------------------

\maketitle
\setcounter{tocdepth}{1}
\tableofcontents

\section{Introduction}
% !TEX root = Kato.tex
\def\rmH{{\rm H}}

\subsection{Motivating question}\label{sec:motiv}

Let $E$ be an elliptic curve over $\bQ$ (hence modular by \cite{BCDT}), and let $L(E,s)$ be its Hasse--Weil $L$-series. A major advance towards the Birch and Swinnerton-Dyer conjecture was the proof by Gross--Zagier \cite{GZ} and Kolyvagin \cite{kol88} of the implication 
\begin{equation}\label{eq:BSD01}
{\rm ord}_{s=1}L(E,s)=1\quad\Longrightarrow\quad {\rm ord}_{s=1}L(E,s)={\rm rank}_\bZ E(\bQ)
\end{equation}
The proof of $(\ref{eq:BSD01})$ resorts to choosing an imaginary quadratic field $K$ for which the construction of Heegner points on $E$ (over ring class extensions of $K$) becomes available and such that ${\rm ord}_{s=1}L(E/K,s)=1$. By the Gross--Zagier formula, 
%the nonvanishing of $L'(E/K,1)$ ensures that a 
the basic Heegner point $y_K\in E(K)$ is then non-torsion, which by Kolyvagin's work implies that $E(K)$ has rank $1$. 
%(and $\Sha(E/K)$ is finite). 
Since $y_K$ descends to $E(\bQ)$ precisely when $L(E,s)$ vanishes to odd order at $s=1$, the above implication follows.

A more recent major advance is Skinner's converse \cite{skinner} to the theorem of Gross--Zagier and Kolyvagin, taking the form of the implication 
\begin{equation}\label{eq:skinner}
\textrm{${\rm rank}_{\bZ}E(\bQ)=1$ and $\#\Sha(E/\bQ)[p^\infty]<\infty$}\quad\Longrightarrow\quad{\rm ord}_{s=1}L(E,s)=1
\end{equation}
for certain primes $p$ of good ordinary reduction for $E$. 
The proof of $(\ref{eq:skinner})$ uses progress \cite{wanIMC} towards an Iwasawa main conjecture over an auxiliary imaginary quadratic field $K$ as before, which under the hypotheses of $(\ref{eq:skinner})$ implies  $y_K\notin E(\bQ)_{\rm tors}$ by the $p$-adic Gross--Zagier formula of \cite{bdp1}, yielding the conclusion by the classical Gross--Zagier formula.

It is natural to wonder about the extension of these results for elliptic curves $E/\bQ$ of rank $2$. Since a stumbling block in this setting is the lack of a systematic construction of algebraic points on $E$ playing the role of Heegner points, a most urgent question to ask might be the following:

\begin{question}\label{motive}
Let $E$ be an elliptic curve over $\bQ$ of rank $2$, and choose an imaginary quadratic field $K$ with 
\begin{equation}\label{eq:ran-K}
{\rm ord}_{s=1}L(E/K,s)={\rm ord}_{s=1}L(E,s)=2. 
\end{equation}
%Can one use $K$ to produce canonical non-torsion points in $E(\bQ)$? Or at least, 
Can one use $K$ to produce explicit \emph{nonzero} classes in the $p$-adic Selmer group ${\rm Sel}(\bQ,V_pE)$ for suitable primes $p$?	
\end{question}

Here ${\rm Sel}(\Q,V_pE)$ denotes the inverse limit under the multiplication-by-$p$ maps of the usual $p^n$-descent Selmer groups ${\rm Sel}_{p^n}(E/\bQ)\subset{\rm H}^1(\bQ,E[p^n])$ tensored with $\bQ_p$, thus sitting in the exact sequence
\[
0\rightarrow E(\bQ)\otimes_{\bZ}\bQ_p\rightarrow {\rm Sel}(\Q,V_pE)\rightarrow T_p\Sha(E/\Q)\otimes_{\bZ_p}\bQ_p\rightarrow 0,
\]
where $T_p\Sha(E/\bQ)$ %(the $p$-adic Tate module of $\Sha(E/\bQ)$) 
should be trivial, since $\Sha(E/\bQ)$ is expected to be finite. 

In this paper, for good ordinary primes $p$, we provide an affirmative answer to Question~\ref{motive}, with condition $(\ref{eq:ran-K})$ replaced by an algebraic counterpart: 
\begin{equation}\label{eq:ralg-K}
{\rm rank}_{\bZ}E(K)={\rm rank}_{\bZ}E(\bQ)=2\quad\textrm{and}\quad\textrm{$\#\Sha(E/\bQ)[p^\infty]<\infty$}.\nonumber
\end{equation}
Moreover, we prove analogues of the implications
\[
y_K\notin E(\bQ)_{\rm tors}\quad\Longrightarrow\quad{\rm dim}_{\bQ_p}{\rm Sel}(\bQ,V_pE)=1
\]
and 
\[
{\rm rank}_{\bZ}E(\bQ)=1\quad\textrm{and}\quad\textrm{$\#\Sha(E/\bQ)[p^\infty]<\infty$}\quad\Longrightarrow\quad y_K\notin E(\bQ)_{\rm tors}
\]
appearing in the course of $(\ref{eq:BSD01})$ and $(\ref{eq:skinner})$, respectively, in the rank $2$ setting, with $y_K$ replaced by certain generalized Kato classes in ${\rm Sel}(\bQ,V_pE)$. 
%We achieve this by establishing (under mild hypotheses) the relevant cases of a recent conjecture by Darmon--Rotger.

\subsection{A conjecture of Darmon--Rotger for rank $2$ elliptic curves}
\label{subsec:DR-conj}

Following their spectacular  
work \cite{DR2} on the Birch and Swinnerton-Dyer conjecture for elliptic curves twisted by certain degree four Artin representations, Darmon--Rotger formulated in \cite{DR2.5} 
a non-vanishing criterion for the generalized Kato classes introduced in \cite{DR2}. In this paper, we consider the special case of their conjectures concerned with elliptic curves of rank $2$.

%In \cite{DR2}, Darmon--Rotger introduced the so-called \emph{generalized Kato classes}. By computing their $p$-adic logarithms, and relating them to special values of $p$-adic $L$-functions, they obtained groundbreaking results on the Selmer groups attached to elliptic curves $E/\bQ$ twisted by certain  

%As above, 
Let $E/\bQ$ be an elliptic curve of conductor $N$, and let $K$ be an imaginary quadratic field of discriminant prime to $N$. Fix a prime $p>2$ of good ordinary reduction for $E$, and assume that $p=\pp\overline{\pp}$ splits in $K$. Let 
$\chi:G_K={\rm Gal}(\overline{\bQ}/K)\rightarrow\bC^\times$ be a ring class character of conductor prime to $Np$ with $\chi(\overline\pp)\neq\pm{1}$, and set %valued in a number field $L$. 
%and let $\hat L$ denote the completion of the image of $L$ under a fixed embedding $\overline{\bQ}\hookrightarrow\overline{\bQ}_p$. 
$\alpha:=\chi(\overline{\pp})$, $\beta:=\chi(\pp)$.

Let $f\in S_2(\Gamma_0(N))$ be the newform associated with $E$ by modularity,  
%by modularity \cite{Fermat-Wiles,TW,BCDT}, 
so that $L(E,s)=L(f,s)$, 
and let $g$ and $h$ be the weight $1$ theta series of $\chi$ and $\chi^{-1}$, respectively. 
%and set 
%\[
%V_1={\rm Ind}_K^\bQ\;\chi,\quad V_2={\rm Ind}_K^\bQ\;\chi^{-1}.
%\]
As explained in \cite{DR2.5} (in which $g$ and $h$ can be more general weight $1$ eigenforms), attached to the triple $(f,g,h)$ and the prime $p$ one has four %(\emph{a priori} distinct) 
\emph{generalized Kato classes}
%\footnote{The terminology is motivated by
%	the fact that the classes considered by Kato \cite{Kato295} can be seen as a special case of
%	this construction when both $g$ and $h$ are taken to be Eisenstein series.}
\begin{equation}\label{eq:4kato}
\kappa(f,g_{\alpha},h_{\alpha^{-1}}),\;\kappa(f,g_{\alpha},h_{\beta^{-1}}),\;\kappa(f,g_{\beta},h_{\alpha^{-1}}),
\;\kappa(f,g_{\beta},h_{\beta^{-1}})\;\in\rmH^1(\bQ,V_{fgh}),
%\quad(V_{fgh}:=V_f\otimes V_g\otimes V_h)
\end{equation}
where  
%$V_g\simeq V_1\otimes_L\hat{L}$ (resp. $V_h\simeq V_2\otimes_L\hat{L}$), for $\hat L$ the completion of the image of $L$ under a fixed embedding $\overline{\bQ}\hookrightarrow\overline{\bQ}_p$, is 
$V_{fgh}\iso V_pE\otimes V_g\otimes V_h$ is the tensor product of the $p$-adic representations associated to $f$, $g$, and $h$. The class $\kappa(f,g_{\alpha},h_{\alpha^{-1}})$ arises as the $p$-adic limit
\begin{equation}\label{eq:limit}
\kappa(f,g_{\alpha},h_{\alpha^{-1}})=\lim_{\ell\to 1}\k(f,\bfg_\ell,\bfh_\ell)\nonumber
\end{equation}
as $(\bfg_\ell,\bfh_\ell)$ runs over the classical weight $\ell\geqslant 2$ specializations of
Hida families $\bfg$ and $\bfh$ passing through the $p$-stabilizations %$g_{\alpha}$ and $h_{\alpha^{-1}}$ in weight $1$, where
\[
g_\alpha:=g(q)-\beta g(q^p),\quad h_{\alpha^{-1}}:=h(q)-\beta^{-1}h(q^p),
\]
in weight $1$,
%have $U_p$-eigenvalues $\alpha$, $\alpha^{-1}$, respectively, 
and where $\k(f,\bfg_\ell,\bfh_\ell)$ is obtained from the $p$-adic \'etale Abel--Jacobi image of certain
higher-dimensional 
%generalization %(for $k>2$) 
%of the 
%generalized 
Gross--Kudla--Schoen diagonal
cycles \cite{gross-kudla,gross-schoen}
on triple products of modular curves. 

One of the main results of \cite{DR2} is an explicit reciprocity law  (just stated for $\kappa(f,g_{\alpha},h_{\alpha^{-1}}$) here) of the form
\begin{equation}\label{eq:DR-ERL}
{\rm exp}^*({\rm res}_p(\kappa(f,g_\alpha,h_{\alpha^{-1}})))=(\textrm{nonzero constant})\cdot L(f\otimes g\otimes h,1),
\end{equation}
whereby the classes $(\ref{eq:4kato})$ land in the Bloch--Kato Selmer group
${\rm Sel}(\bQ,V_{fgh})\subset\rmH^1(\bQ,V_{fgh})$ precisely when the triple product $L$-series $L(f\otimes g\otimes h,s)$ vanishes at $s=1$; the main conjecture of \cite{DR2.5} went further to predict that these %$4$ 
classes span a \emph{non-trivial} subspace of ${\rm Sel}(\bQ,V_{fgh})$ precisely when $L(f\otimes g\otimes h,s)$ vanishes to order exactly $2$ at $s=1$. 

Since for our specific $g$ and $h$ we have the factorization
\begin{align}\label{eq:factorL}
L(f\otimes g\otimes h,s)=L(E,s)\cdot L(E^{K},s)\cdot L(E/K,\chi^2,s),
\end{align}
where $E^{K}$ is the $K$-quadratic twist of $E$, arising from the decomposition
\begin{equation}\label{eq:triple}
V_{fgh}\iso\bigl(V_pE\otimes{\rm Ind}_K^\bQ\mathds{1}\bigr)
\oplus\bigl(V_pE\otimes{\rm Ind}_K^\bQ\chi^2\bigr),
\end{equation}
the cases of the main conjecture of Darmon--Rotger concerned with elliptic curves of rank $2$  
may be stated as follows, where we let 
\begin{equation}\label{eq:E-kato}
\kappa_{\alpha,\alpha^{-1}},\;\kappa_{\alpha,\beta^{-1}},\; \kappa_{\beta,\alpha^{-1}},\;\kappa_{\beta,\beta^{-1}}\in \rmH^1(\bQ,V_pE)
\end{equation}
be the natural image of the classes $(\ref{eq:4kato})$ under the projection $\rmH^1(\bQ,V_{fgh})\rightarrow\rmH^1(\bQ,V_pE)$. 
%Note also that the following conjecture may be thought of as a rank $2$ analogue of Perrin-Riou's conjecture \cite{PR43} for Beilinson--Kato elements.

\begin{conj}[Darmon--Rotger]\label{conj:DR-conj}
Assume that $L(E^{K},1)$ and $L(E/K,\chi^2,1)$ are both non-zero. Then the following are equivalent:
	\begin{itemize}
	\item[(1)] The classes $(\ref{eq:E-kato})$ span a non-trivial subspace of ${\rm Sel}(\Q,V_pE)$.
     \item[(2)] $\dim_{\bQ_p}{\rm Sel}(\bQ,V_pE)=2$.
	\item[(3)] ${\rm rank}_{\bZ}E(\bQ)=2$. %and $\Sha(E/\bQ)[p^\infty]$ is finite. 
	\item[(4)] ${\rm ord}_{s=1}L(E,s)=2$.		
		%\item[(iii)] The classes $(\ref{eq:E-kato})$ span a non-trivial subspace of ${\rm Sel}(\bQ,V_pE)$. 
	\end{itemize}
%	Moreover, in that case, up to $\bQ_p^\times$:
%	\begin{equation}\label{eq:reg}
%	\begin{aligned}
%	\kappa_{\alpha_g,\alpha_h}&=\log_p(Q)\cdot P-\log_p(P)\cdot Q,
%	&&\kappa_{\alpha_g,\beta_h}=0,\\
%	\kappa_{\beta_g,\alpha_h}&=0,
%	&&\kappa_{\beta_g,\beta_h}=\log_p(Q)\cdot P-\log_p(P)\cdot Q,
%	\end{aligned}
%	\end{equation}
%	where $(P,Q)$ is a $\bQ_p$-basis of ${\rm Sel}(\bQ,V_pE)$, and  $\log_p:{\rm Sel}(\mathbf{Q},V_pE)\rightarrow\bQ_p$ is the composition of the restriction map ${\rm res}_p:{\rm Sel}(\bQ,V_pE)\rightarrow E(\bQ_p)\otimes\bQ_p$ with the formal group logarithm $E(\bQ_p)\otimes\bQ_p\rightarrow\bQ_p$.
\end{conj}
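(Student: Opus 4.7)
The plan is to use the leading-term formula announced in the abstract---an identity expressing the first non-trivial coefficient of an anticyclotomic $p$-adic $L$-function attached to $E$ in terms of a derived $p$-adic height of generalized Kato classes and an enhanced $p$-adic regulator---as the bridge that closes the circuit of implications in Conjecture~\ref{conj:DR-conj}. Concretely, I would aim at the circuit $(3)\Rightarrow(1)\Rightarrow(2)$, with $(2)\Leftrightarrow(3)$ standard under $\#\Sha(E/\bQ)[p^\infty]<\infty$ and $(3)\Leftrightarrow(4)$ handled in the setting of Skinner's converse~(\ref{eq:skinner}) suitably adapted to rank~$2$.

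First, the reduction step. Using the decomposition (\ref{eq:triple}) and the non-vanishing $L(E^{K},1)\neq 0$, $L(E/K,\chi^2,1)\neq 0$, the Bloch--Kato conjecture in analytic rank zero (known via Kato's Euler system and Skinner--Urban for the relevant twists) yields the triviality of the Bloch--Kato Selmer groups of the other two summands of $V_{fgh}$. The natural projection $\rmH^1(\bQ,V_{fgh})\to\rmH^1(\bQ,V_pE)$ therefore restricts to an isomorphism on Selmer subspaces, so the images~(\ref{eq:E-kato}) carry the same Selmer information as the triple-product classes~(\ref{eq:4kato}); the reciprocity law (\ref{eq:DR-ERL}) combined with (\ref{eq:factorL}) then places them in ${\rm Sel}(\bQ,V_pE)$ precisely when $L(E,1)=0$.

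Next, the derived formula, which is the heart of the argument. I would construct the anticyclotomic $\cL_p^{\rm ac}$ as the image of a $\Lambda$-adic diagonal-cycle class under a Perrin-Riou-type regulator along the anticyclotomic tower, and observe a trivial-zero phenomenon at the trivial character forced by the split hypothesis $p=\pp\overline{\pp}$ and $\chi(\overline\pp)\neq\pm 1$. The leading term would then be computed by differentiating (\ref{eq:DR-ERL}) in the weight variables of $\bfg$ and $\bfh$, converting the result via Nekov\'a\v{r}-style duality into an identity of the schematic form
\[
\cL_p^{\rm ac}\;\equiv\;R_p^{\rm enh}(E/K)\cdot\pair{\kappa_{\alpha,\alpha^{-1}}}{\kappa_{\beta,\beta^{-1}}}_{\rm der}\pmod{\text{higher order}},
\]
where $R_p^{\rm enh}$ is the enhanced $p$-adic regulator (a determinant built from cyclotomic heights on $E(K)$ together with local logarithms at $\pp,\overline{\pp}$) and $\pair{\,}{\,}_{\rm der}$ is a derived $p$-adic height pairing. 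Granting this, the implications follow: under (3), the anticyclotomic main conjecture over $K$, available in the required analytic-rank-zero regime thanks to the work of Wan and others, forces $\cL_p^{\rm ac}$ to have non-vanishing leading term, so the Kato classes cannot all vanish, giving $(3)\Rightarrow(1)$; conversely, assuming (1), a non-zero Selmer class together with Euler-system divisibilities for diagonal cycles bounds ${\rm Sel}(\bQ,V_pE)$ from above by~$2$, while the rank-$\geqslant 2$ input coming from the linearly independent choices of $p$-stabilization in (\ref{eq:E-kato}) gives the reverse inequality and hence (2).

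The main obstacle will be establishing the leading-term formula itself. It requires simultaneously (i) developing a three-variable $\Lambda$-adic theory of diagonal-cycle classes and Perrin-Riou big logarithms along the Hida families through $g_\alpha,g_\beta,h_{\alpha^{-1}},h_{\beta^{-1}}$, (ii) correctly identifying the trivial zeros at the trivial character with the failure of interpolation for the four $p$-stabilizations appearing in (\ref{eq:4kato}), and (iii) proving the non-degeneracy of the enhanced regulator, which is not a standard $p$-adic height matrix but a hybrid of global heights and local Coleman integrals at the split prime. It is exactly here that the split-prime hypothesis and the condition $\chi(\overline\pp)\neq\pm 1$ should play an essential role, by disentangling the two $p$-adic places and allowing the derived height to be paired against a genuinely two-dimensional space of Selmer classes.
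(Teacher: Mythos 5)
This statement is a \emph{conjecture} (labeled as such in the paper), and the paper does not claim to prove it in full: the remark immediately following it notes that $(2)\Leftrightarrow(3)$ amounts to the finiteness of $\Sha(E/\bQ)[p^\infty]$ and that $(3)\Leftrightarrow(4)$ is the rank-$2$ case of the Birch and Swinnerton-Dyer conjecture, both of which are open. What the paper actually establishes (Theorem~A, Corollaries~B and C) is, under additional hypotheses on $\bar\rho_{E,p}$ and $N^-$, an equivalence between the non-vanishing of $\kappa_{\alpha,\alpha^{-1}}$ and the condition $\dim_{\bQ_p}{\rm Sel}_{\rm str}(\bQ,V_pE)=1$ (note the \emph{strict} Selmer group), together with the implications $\kappa_{\alpha,\alpha^{-1}}\neq 0\Rightarrow\dim_{\bQ_p}{\rm Sel}(\bQ,V_pE)=2$ and, assuming $\rank_\bZ E(\bQ)=2$ \emph{and} $\Sha(E/\bQ)[p^\infty]$ finite, non-vanishing of $\kappa_{\alpha,\alpha^{-1}}$ and $\kappa_{\beta,\beta^{-1}}$.

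Your proposal therefore contains a fundamental gap right at the outset: you claim to close the entire circuit $(1)\Leftrightarrow(2)\Leftrightarrow(3)\Leftrightarrow(4)$, but $(3)\Leftrightarrow(4)$ cannot be ``handled in the setting of Skinner's converse suitably adapted to rank $2$'' --- no such adaptation exists, and proving either implication would resolve a major open case of BSD. Likewise, your step $(3)\Rightarrow(1)$ silently uses the finiteness of $\Sha(E/\bQ)[p^\infty]$ (which is exactly what is needed to pass from $\rank_\bZ E(\bQ)=2$ to $\dim_{\bQ_p}{\rm Sel}(\bQ,V_pE)=2$, and is an explicit hypothesis in Corollary~B), but you never assume it. If you drop those unjustified equivalences and restrict attention to what can genuinely be proved, your plan is broadly in the spirit of the paper: a generalized-Coleman/reciprocity-law identity relating a $\Lambda$-adic diagonal-cycle class to $\Theta_{f/K}$, followed by a Rubin-type leading-term formula via derived anticyclotomic $p$-adic heights. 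But two further points of your sketch depart from what is actually done and would need to be repaired: the ``derived pairing of $\kappa_{\alpha,\alpha^{-1}}$ against $\kappa_{\beta,\beta^{-1}}$'' is not what the Rubin formula produces --- the paper instead pairs $\kappa_{\alpha,\alpha^{-1}}$ against an arbitrary Selmer class $x$ and obtains $\bar\theta_{f/K}\cdot\log_{\omega_E,\pp}(x)$ up to explicit constants --- and the ``rank-$\geqslant 2$ lower bound from linearly independent $p$-stabilizations'' is illusory, since the paper proves $\kappa_{\alpha,\beta^{-1}}=\kappa_{\beta,\alpha^{-1}}=0$ identically (Corollary~\ref{cor:DR-erl}), so only two of the four classes can possibly contribute and they are conjugate under complex conjugation. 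Finally, the upper and lower bounds on the Selmer group in the paper come from the anticyclotomic Iwasawa main conjecture divisibilities of Skinner--Urban and Bertolini--Darmon--Pollack--Weston, combined with Howard's filtration and a parity argument using $S=S^+$; the ``Euler-system divisibilities for diagonal cycles'' you invoke do not appear and would not by themselves give the two-sided control on $S^{(r)}$ that the argument needs.
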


\begin{rem}
Of course, the equivalence of $(2)\Leftrightarrow(3)$ amounts to the finiteness of $\Sha(E/\bQ)[p^\infty]$, and the equivalence $(3)\Leftrightarrow(4)$ is the rank $2$ case of the Birch--Swinnerton-Dyer conjecture. 
\end{rem}

%The novelty of Conjecture~\ref{conj:DR-conj} is the prediction that the classes $(\ref{eq:E-kato})$ should have a bearing on the arithmetic of elliptic curves $E/\bQ$ of rank $2$. 

Conjecture~\ref{conj:DR-conj} is a special case of \cite[Conj.~3.2]{DR2.5} and testing the predicted non-vanishing criterion for $(\ref{eq:E-kato})$ experimentally presented an ``interesting challenge'' at the time of its formulation (see [\emph{loc.cit.}, $\S{4.5.3}$]). As an application of the main results of this paper,  
numerical examples supporting this conjecture will be presented in \secref{S:example}. 
%\end{enumerate} 
%\end{rem}

%\theoremstyle{plain}
%\newtheorem*{hypCR}{Hypothesis D}

\subsection{Main results}

%We keep the setting introduced in $\S\ref{subsec:DR-conj}$, and 
Let $\bar{\rho}_{E,p}:G_\Q={\rm Gal}(\overline{\bQ}/\bQ)\rightarrow{\rm Aut}_{\mathbf{F}_p}(E[p])$ be the residual Galois representation associated to $E$, and write
\[
N=N^+N^-
\] 
with $N^+$ (resp. $N^-$) divisible only by primes which are split (resp. inert) in $K$. Consider the \emph{strict} Selmer group defined by 
\[
\Sel_{\rm str}(\bQ,V_pE):={\rm ker}\biggl(\Sel(\bQ,V_pE)\stackrel{{\rm log}_p}
\rightarrow\Qp\biggr),
\] 
where $\log_p$
%{\rm Sel}(\mathbf{Q},T_pE)\rightarrow\bQ_p$ denote 
denotes the composition of the restriction map ${\rm loc}_p:{\rm Sel}(\bQ,V_pE)\rightarrow E(\bQ_p)\widehat{\otimes}\bQ_p$ with the formal group logarithm $E(\bQ_p)\widehat\otimes\bQ_p\rightarrow\bQ_p$.
%It is obvious that $\dim_{\Qp}\Sel_{\rm str}(\bQ,V_p)\geq \dim_{\Qp}\Sel(\bQ,V_p)-1$. We certainly expect the equality to hold, but it seems a difficult problem to show $\log_p$ is a non-zero map.

\begin{mainthmA}\label{thm:main}
%Let $E/\bQ$ be an elliptic curve of conductor $N$ with ${\rm rank}_{\bZ}E(\bQ)=2$, let $p>2$ be a prime of good ordinary reduction for $E$, and let $K$ be an imaginary quadratic field of discriminant prime to $N$ in which $p$ splits. Assume that $L(E^{K},1)$ and $L(E/K,\psi,1)$ are both nonzero, that $\Sha(E/\bQ)[p^\infty]$ is finite, and that $\bar{\rho}_{E,p}$ is ramified at every prime $\ell\mid N^-$. Then the generalized Kato classes $(\ref{eq:E-kato})$ generate a non-trivial subspace of ${\rm Sel}(\bQ,V_pE)$. 	
Assume that $L(E^{K},1)$ and $L(E/K,\chi^2,1)$ are both nonzero, and that 
\begin{itemize}
	\item $\bar{\rho}_{E,p}$ is irreducible,
	\item $N^-$ is square-free,
    \item $\bar{\rho}_{E,p}$ is ramified at every prime $q\vert N^-$.
\end{itemize}  
Then $\kappa_{\al,\beta^{-1}}=\kappa_{\beta,\al^{-1}}=0$ and the following statements are equivalent:
\begin{itemize}
	\item[(i)] The class $\kappa_{\al,\al^{-1}}$ is a non-trivial element in $\Sel(\bQ,V_pE)$.
	\item[(ii)] ${\rm dim}_{\bQ_p}{\rm Sel}_{\rm str}(\bQ,V_pE)=1$.
%	\item[(iii)] ${\rm dim}_{\bQ_p}{\rm Sel}(\bQ,V_pE)=2$ and $\log_p\neq 0$.
\end{itemize}
\end{mainthmA}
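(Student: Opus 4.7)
The strategy is to combine the Darmon--Rotger explicit reciprocity law $(\ref{eq:DR-ERL})$ with the Iwasawa theory of an anticyclotomic $p$-adic $L$-function attached to $E/K$, culminating in the leading-term formula promised in the abstract. The four classes in $(\ref{eq:E-kato})$ fall naturally into two groups: the \emph{unbalanced} pair $\kappa_{\alpha,\beta^{-1}},\kappa_{\beta,\alpha^{-1}}$ obtained by mismatching $p$-stabilizations on $g$ and $h$, and the \emph{balanced} pair $\kappa_{\alpha,\alpha^{-1}},\kappa_{\beta,\beta^{-1}}$. Under the decomposition $(\ref{eq:triple})$, the projection $\rmH^1(\bQ,V_{fgh})\to\rmH^1(\bQ,V_pE)$ factors through the summand $V_pE\otimes\Ind_K^{\bQ}\mathds{1}=V_pE\oplus V_pE^K$ (the summand $V_pE\otimes\Ind_K^{\bQ}\chi^2$ is killed by this projection). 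To establish $\kappa_{\alpha,\beta^{-1}}=\kappa_{\beta,\alpha^{-1}}=0$, I would couple companion explicit reciprocity laws for the unbalanced classes---relating their image under an alternative dual-exponential map to the nonzero values $L(E^K,1)$ and $L(E/K,\chi^2,1)$---with the anticyclotomic Iwasawa main conjecture under the hypotheses on $\bar\rho_{E,p}$ and $N^-$ (available via the work of Chida--Hsieh and Skinner--Urban--Wan). The nonvanishing $L$-values then force triviality of the Selmer groups hosting the unbalanced classes.

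For the balanced class $\kappa_{\alpha,\alpha^{-1}}$, applying $(\ref{eq:DR-ERL})$ together with $(\ref{eq:factorL})$ yields $\exp^{*}(\mathrm{res}_p(\kappa_{\alpha,\alpha^{-1}}))\propto L(E,1)\cdot L(E^K,1)\cdot L(E/K,\chi^2,1)$, which vanishes since the rank-$2$ setting forces $L(E,1)=0$; hence $\kappa_{\alpha,\alpha^{-1}}\in\Sel(\bQ,V_pE)$. The equivalence (i)$\Leftrightarrow$(ii) is then accessed through the leading-term formula for an anticyclotomic $p$-adic $L$-function $\mathcal{L}_p^{\ac}$ interpolating the central values $L(E/K,\chi\cdot\psi,1)$ as $\psi$ runs over anticyclotomic characters. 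In rank $2$, $\mathcal{L}_p^{\ac}$ vanishes at $\psi=\mathds{1}$ to higher order, and the leading term should admit the form
\[
c\cdot\bigl\langle\kappa_{\alpha,\alpha^{-1}},\kappa_{\alpha,\alpha^{-1}}\bigr\rangle_n\cdot\mathcal{R}_{\mathrm{enh}},
\]
where $\langle\cdot,\cdot\rangle_n$ denotes an $n$-th derived $p$-adic height pairing on $\Sel(\bQ,V_pE)$ and $\mathcal{R}_{\mathrm{enh}}$ is an enhanced $p$-adic regulator built from $\Sel_{\mathrm{str}}(\bQ,V_pE)$. The implication (i)$\Rightarrow$(ii) then follows because nonvanishing of $\kappa_{\alpha,\alpha^{-1}}$ in $\Sel(\bQ,V_pE)$, combined with nondegeneracy of the derived height pairing, forces $\mathcal{R}_{\mathrm{enh}}\neq 0$ and hence $\dim_{\bQ_p}\Sel_{\mathrm{str}}(\bQ,V_pE)=1$; conversely, (ii) makes the enhanced regulator nonzero on a basis, and divisibility in the anticyclotomic main conjecture then bounds the order of vanishing of $\mathcal{L}_p^{\ac}$ so that the leading-term formula forces $\kappa_{\alpha,\alpha^{-1}}\neq 0$.

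The principal obstacle is the derivation of the leading-term formula itself. This requires a delicate two-parameter Hida deformation along the families through $g$ and $h$, higher-derivative computations for the resulting two-variable $p$-adic $L$-function in the spirit of Bertolini--Darmon's $p$-adic height constructions, and the introduction and nondegeneracy analysis of the enhanced $p$-adic regulator, a new object adapted to the rank-$2$ anticyclotomic setting and whose interaction with $\log_p$ on $\Sel_{\mathrm{str}}$ must be precisely understood. A secondary difficulty is verifying that the main-conjecture divisibilities of Skinner--Urban--Wan and Chida--Hsieh apply under the precise ramification hypotheses imposed on $\bar\rho_{E,p}$ at primes dividing $N^-$, which is needed both for the vanishing of the unbalanced classes and for the rigidity required in the rank estimate for $\Sel_{\mathrm{str}}$.
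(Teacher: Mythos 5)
Your proposal captures some of the broad contours---explicit reciprocity law, anticyclotomic $p$-adic $L$-function, derived heights---but there are two genuine gaps that would prevent the argument from working as sketched.

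\textbf{Vanishing of the unbalanced classes.} You propose to deduce $\kappa_{\alpha,\beta^{-1}}=\kappa_{\beta,\alpha^{-1}}=0$ by coupling reciprocity laws whose target $L$-values $L(E^K,1)$, $L(E/K,\chi^2,1)$ are nonzero, and concluding that ``the Selmer groups hosting the unbalanced classes'' are trivial. But the unbalanced classes are, by definition, projected into $\rmH^1(\bQ,V_pE)$---the same module that hosts $\kappa_{\alpha,\alpha^{-1}}$ and which the theorem wants to be $2$-dimensional. The vanishing of $\Sel(\bQ,V_pE^K)$ and $\Sel(K,V_pE\otimes\chi^2)$ only kills the components of the full triple-product class in the \emph{other} summands of $(\ref{eq:triple})$; it says nothing about the $V_pE$-component. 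The paper's actual argument is different: it deforms $\kappa(f,g_\alpha,h_{\beta^{-1}})$ along the anticyclotomic line inside a three-variable family, shows that the resulting class lies in the module of anticyclotomic universal norms in $\Sel(K,V_pE)$, and then invokes Cornut--Vatsal to conclude this module is trivial since the sign of $E/K$ is $+1$. This is a rigidity-of-deformation argument, not an $L$-value nonvanishing argument.

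\textbf{The leading-term formula.} Your proposed identity involves a self-pairing $\langle\kappa_{\alpha,\alpha^{-1}},\kappa_{\alpha,\alpha^{-1}}\rangle_n$. This cannot work: Howard's derived height pairing $h^{(r)}$ satisfies $h^{(r)}(x,y)=(-1)^{r+1}h^{(r)}(y,x)$, and in the present setting the relevant $r$ is forced to be even (because $L(E^K,1)\neq 0$ makes $\Sel(K,V_pE)$ invariant under complex conjugation $\tau$, and then the $\tau$-equivariance $h^{(r)}(x^\tau,y^\tau)=(-1)^r h^{(r)}(x,y)$ forces $h^{(r)}\equiv 0$ for odd $r$). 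For even $r$ the pairing is alternating, so the self-pairing is identically zero and detects nothing. The paper's formula instead expresses $h^{(\rho)}(\kappa_{\alpha,\alpha^{-1}},x)$ for \emph{arbitrary} $x\in\Sel(\bQ,V_pE)$ as a nonzero constant times the leading coefficient $\bar\theta_{f/K}$ of the Bertolini--Darmon theta element times $\log_{\omega_E,\pp}(x)$; the nonvanishing of $\kappa_{\alpha,\alpha^{-1}}$ is then equivalent to the nonvanishing of $\log_{\omega_E,\pp}$ on $\Sel(\bQ,V_pE)$, i.e.\ to $\dim\Sel_{\rm str}=1$. You also don't identify the crucial technical bridge: the Coleman map (anticyclotomic Perrin-Riou big exponential) applied to $\loc_\pp$ of the $\Lambda$-adic diagonal cycle class \emph{equals} $\Theta_{f/K}$ up to explicit nonzero factors (combining the Darmon--Rotger reciprocity law with Hsieh's factorization of the triple product $p$-adic $L$-function), and this is what feeds into Howard's Rubin-style formula. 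Finally, your appeal to ``the main conjecture'' should be split into its two halves, which play distinct roles: the Skinner--Urban divisibility gives $r\geqslant\rho$ needed for (ii)$\Rightarrow$(i), while the Bertolini--Darmon divisibility gives the upper bound $2\rho\geqslant\sum_i ie_i$ needed for (i)$\Rightarrow$(ii).
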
	

%\begin{rem}
%The hypotheses in Theorem~A imply that $E/\bQ$ has root number $+1$, 
%By the $p$-parity
 %
%The proof of Theorem~A indeed yields the following expressions for the generalized Kato classes mod $\bQ_p^\times$:
%		\begin{equation}\label{eq:reg}
%		\begin{aligned}
%		\kappa_{\al,\al^{-1}}&=\log_p(Q)\cdot P-\log_p(P)\cdot Q,
%		&&\kappa_{\alpha,\beta^{-1}}=0,\\
%		\kappa_{\beta,\alpha^{-1}}&=0,
%		&&\kappa_{\beta,\beta^{-1}}=\log_p(Q)\cdot P-\log_p(P)\cdot Q,
%		\end{aligned}\nonumber
%		\end{equation} 
%		where $(P,Q)$ is any $\bQ_p$-basis of ${\rm Sel}(\bQ,V_pE)$.
%		In other words, under the hypotheses of Theorem~A the classes $(\ref{eq:E-kato})$ generate 
%		the strict Selmer group ${\rm Sel}_{\rm str}(\Q,V_pE)$, confirming a prediction made by Darmon--Rotger (see \cite[\S{4.5.3}]{DR2.5}). Moreover, we can also show that, up to algebraic multiples, the classes $\kappa_{\alpha,\alpha^{-1}}$ and $\kappa_{\beta,\beta^{-1}}$ are independent of the auxiliary choice of ring class character $\chi$.
%\end{rem}

\begin{rem}
The hypotheses in Theorem~A imply in particular that $E$ has root number $+1$, 
%and $N^-$ is the product of an \emph{odd} number of primes. Either of
and either of the statements (i) or (ii) implies that $L(E,1)=0$ 
by %\eqref{eq:DR-ERL} or 
\cite{Kato295}. Thus the elliptic curves in Theorem~A all satisfy 
\begin{equation}\label{eq:ran2}
{\rm ord}_{s=1}L(E,s)\geqslant 2.
\end{equation}
%(and conjecturally ${\rm ord}_{s=1}L(E,s)=2$ by  Birch--Swinnerton-Dyer). 
On the other hand, if the root number of $E$ is $+1$ and $\bar{\rho}_{E,p}$ is irreducible and ramified at some prime $q$, by \cite{bfh90} and \cite{vatsal-special} there exist infinitely many imaginary quadratic fields $K$ and ring class characters $\chi$ of prime-power conductor such that the following hold:
\begin{itemize}
	\item $q$ is inert in $K$,
	\item every prime factor of $N/q$ splits in $K$,
	\item $L(E^K,1)\neq 0$ and $L(E/K,\chi^2,1)\neq 0$.
\end{itemize}
Therefore, by Theorem~A the generalized Kato classes $(\ref{eq:E-kato})$ provide an explicit construction of non-trivial Selmer classes for rank $2$  elliptic curves %having a flavor similar to the Heegner 
analogous to the construction of Heegner classes for rank $1$ elliptic curves. The tables of \secref{S:example} exhibit numerical examples satisfying the rank part of the BSD conjecture and the hypotheses of Theorem~A, yielding the first instances of non-trivial generalized Kato classes for rank $2$ elliptic curves. 
\end{rem}

\begin{rem}
Another construction  
of non-trivial classes in ${\rm Sel}(\bQ,V_pE)$ for elliptic curves $E/\bQ$ satisfying $(\ref{eq:ran2})$ will appear in forthcoming work by Skinner--Urban (see \cite{SU-ICM,urban-tifr}).    
	%If $\bar{\rho}_{E,p}$ is irreducible and $\bar\rho_{E,p}$ is ramified at some prime $q$, then we can always find infinitely many $K$ and ring class characters $\chi$ of prime power conductors such that all of the hypotheses in 
%Theorem~\ref{thm:main} are satisfied thanks to \cite{bfh90} and \cite{vatsal-special}. Therefore, 
%By Theorem~1 the generalized Kato classes $(\ref{eq:E-kato})$ of Darmon--Rotger  provide a new construction of non-trivial Selmer classes for elliptic curves $E/\Q$ of rank $2$, and 
Their construction of Selmer classes %not so explicit and 
is completely different from that of generalized Kato classes, and it would be very interesting to compare the two constructions.
\end{rem}
 We obtain immediately from Theorem~A the following result towards Conjecture~\ref{conj:DR-conj}.\begin{mainthmB}
Let the hypotheses be as in Theorem~A. If ${\rm rank}_{\bZ}E(\bQ)=2$ and $\Sha(E/\bQ)[p^\infty]$ is finite, then the generalized Kato classes $\kappa_{\al,\al^{-1}}$ and $\kappa_{\beta,\beta^{-1}}$ are both non-zero and generate the strict Selmer group ${\rm Sel}_{\rm str}(\bQ,V_pE)$. 
\end{mainthmB}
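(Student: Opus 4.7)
The strategy is to verify condition~(ii) of Theorem~A from the hypotheses of Corollary~B, then invoke Theorem~A to obtain~(i), and finally use the symmetry between $\pp$ and $\overline{\pp}$ to treat $\kappa_{\beta,\beta^{-1}}$. First, the finiteness of $\Sha(E/\bQ)[p^\infty]$ together with the exact sequence in \S\ref{sec:motiv} yields an isomorphism $\Sel(\bQ,V_pE)\cong E(\bQ)\otimes_\bZ\bQ_p$ of $\bQ_p$-vector spaces, which is of dimension $2$. Since the formal group logarithm defines an isomorphism $E(\bQ_p)\widehat{\otimes}\bQ_p\simeq\bQ_p$ and every non-torsion $P\in E(\bQ)$ has non-torsion image in $E(\bQ_p)$, the map $\log_p\colon\Sel(\bQ,V_pE)\to\bQ_p$ is surjective; hence $\dim_{\bQ_p}{\rm Sel}_{\rm str}(\bQ,V_pE)=1$, establishing condition~(ii) of Theorem~A.

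Theorem~A then gives that $\kappa_{\alpha,\alpha^{-1}}$ is a non-trivial element of $\Sel(\bQ,V_pE)$. The hypotheses of Theorem~A are invariant under interchanging $\pp$ and $\overline{\pp}$, and this swap exchanges $\alpha$ and $\beta$; applying Theorem~A after this swap gives $\kappa_{\beta,\beta^{-1}}\neq 0$ as well.

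It remains to show that both classes lie in ${\rm Sel}_{\rm str}(\bQ,V_pE)$, after which the fact that they generate it is automatic, since any non-zero element of a $1$-dimensional space spans it. Equivalently, one must verify that $\log_p(\kappa_{\alpha,\alpha^{-1}})=0$, and similarly for $\kappa_{\beta,\beta^{-1}}$. This is dual to the quantity controlled by the Darmon--Rotger explicit reciprocity law~(\ref{eq:DR-ERL}), so it cannot be read off directly from that formula; instead I expect it to emerge from the proof of Theorem~A itself, whose centerpiece --- the formula for the leading term of an anticyclotomic $p$-adic $L$-function in terms of the derived $p$-adic height of $\kappa_{\alpha,\alpha^{-1}}$ advertised in the abstract --- is only meaningful once $\kappa_{\alpha,\alpha^{-1}}$ lies in ${\rm Sel}_{\rm str}$, so its formulation implicitly asserts this containment. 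This strict-Selmer membership is the main obstacle; the remainder of Corollary~B is a dimension count combined with the standard fact that non-torsion rational points have non-zero $p$-adic formal logarithm.
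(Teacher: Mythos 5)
Your dimension count for condition~(ii) is correct, and the step from there to $\kappa_{\alpha,\alpha^{-1}}\neq 0$ via Theorem~A, together with the symmetry $\pp\leftrightarrow\overline\pp$ for $\kappa_{\beta,\beta^{-1}}$, is sound. But you have correctly identified --- and then left unfilled --- a genuine gap: Theorem~A as stated only produces a non-trivial class in $\Sel(\bQ,V_pE)$, not in $\Sel_{\rm str}(\bQ,V_pE)$, and Corollary~B asserts the latter. You conclude with a hand-wave that the strict-Selmer membership ``implicitly'' follows because the leading-term formula ``is only meaningful'' once it holds; that is not an argument, and in fact the formula is meaningful regardless.

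The missing step is short but it requires going into the proof of Theorem~A rather than using only its statement. In the proof of ${\rm (ii)}\Rightarrow{\rm (i)}$ the authors establish, for some \emph{even} integer $\rho\geqslant 2$, the identity (E:1.Main):
\[
h^{(\rho)}(\kappa_{\alpha,\alpha^{-1}},x)=\frac{1-p^{-1}\alpha_p}{1-\alpha_p^{-1}}\cdot\bar\theta_{f/K}\cdot\log_{\omega_E,\pp}(x)\cdot C
\]
for all $x\in S=S^{(\rho)}$, with $\bar\theta_{f/K}\neq 0$ and $C\neq 0$. Since $\rho$ is even, the symmetry property (E:alt) makes $h^{(\rho)}$ \emph{alternating}, so $h^{(\rho)}(\kappa_{\alpha,\alpha^{-1}},\kappa_{\alpha,\alpha^{-1}})=0$; taking $x=\kappa_{\alpha,\alpha^{-1}}$ in the displayed identity then forces $\log_{\omega_E,\pp}(\kappa_{\alpha,\alpha^{-1}})=0$, i.e.\ $\kappa_{\alpha,\alpha^{-1}}\in\Sel_{\rm str}(\bQ,V_pE)$. (Equivalently, one reads this off Corollary~\ref{C:51.M}, which writes $\kappa_{\alpha,\alpha^{-1}}$ as a scalar multiple of $P\ot\log_pQ-Q\ot\log_pP$, a vector visibly killed by $\log_p$.) The same applies to $\kappa_{\beta,\beta^{-1}}$ after swapping $\pp$ and $\overline\pp$. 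Once both classes are known to be non-zero elements of the $1$-dimensional space $\Sel_{\rm str}(\bQ,V_pE)$, the generation claim is trivial, as you say. So your proposal is not wrong, but it is incomplete at exactly the point you flagged; the fix is the alternating property of the even derived height pairing combined with the leading-term formula, not an appeal to the formula's ``well-posedness''.
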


The above corollary has the flavor of a rank $2$ analogue of Skinner's converse to Kolyvagin's theorem \cite{skinner}. In the opposite direction, Theorem~A 
%combined with the $p$-parity conjecture for Selmer groups \cite{nekovarII} 
also yields the following rank $2$ analogue of Kolyvagin's theorem in terms of generalized Kato classes. 

\begin{mainthmC}Let the hypotheses be as in Theorem~A. Then the implication 
\[\kappa_{\al,\al^{-1}}\neq 0\implies\dim_{\Qp}{\rm Sel}(\bQ,V_pE)=2\]
holds.
\end{mainthmC}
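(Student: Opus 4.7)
The plan is to obtain Corollary~C as a formal consequence of Theorem~A combined with Nekov\'a\v{r}'s parity theorem for the $p$-Selmer rank.

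Assume $\kappa_{\al,\al^{-1}}\neq 0$. The first step is to check that $\kappa_{\al,\al^{-1}}\in\Sel(\bQ,V_pE)$. Projecting the explicit reciprocity law \eqref{eq:DR-ERL} along the decomposition \eqref{eq:triple} onto the $V_pE$-component and invoking the factorization \eqref{eq:factorL}, the hypothesis $L(E^K,1)L(E/K,\chi^2,1)\neq 0$ pins the Bloch--Kato local condition at $p$ for $\kappa_{\al,\al^{-1}}$ down to the vanishing of $L(E,1)$. If $L(E,1)\neq 0$, Kato's divisibility \cite{Kato295} forces $\Sel(\bQ,V_pE)=0$ and tightly constrains $\rmH^1(\bQ,V_pE)$, allowing one to compare $\kappa_{\al,\al^{-1}}$ with Kato's own Euler system class and exclude the scenario of a nonzero $\kappa_{\al,\al^{-1}}$ sitting outside $\Sel(\bQ,V_pE)$. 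Hence condition~(i) of Theorem~A is in force, and Theorem~A yields $\dim_{\Qp}\Sel_{\rm str}(\bQ,V_pE)=1$.

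The remainder is purely formal. Since the formal group logarithm $\log_p$ is injective on the one-dimensional $\bQ_p$-vector space $E(\bQ_p)\widehat{\otimes}\bQ_p$, the strict Selmer group $\Sel_{\rm str}\subseteq\Sel(\bQ,V_pE)$ coincides with the kernel of the localization ${\rm loc}_p:\Sel(\bQ,V_pE)\to E(\bQ_p)\widehat{\otimes}\bQ_p$. Thus $\Sel(\bQ,V_pE)/\Sel_{\rm str}(\bQ,V_pE)$ embeds in a one-dimensional space, giving the chain
\[
1=\dim_{\Qp}\Sel_{\rm str}(\bQ,V_pE)\leq\dim_{\Qp}\Sel(\bQ,V_pE)\leq 2.
\]
Under the hypotheses of Theorem~A the global root number satisfies $w(E)=+1$ (cf.\ the remark after Theorem~A), so Nekov\'a\v{r}'s parity theorem for the self-dual $p$-adic representation $V_pE$ implies that $\dim_{\Qp}\Sel(\bQ,V_pE)$ is even. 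Together with the preceding bound, this forces $\dim_{\Qp}\Sel(\bQ,V_pE)=2$.

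The main obstacle is the initial verification that $\kappa_{\al,\al^{-1}}\in\Sel(\bQ,V_pE)$ from its non-vanishing alone, which requires the careful decomposition of \eqref{eq:DR-ERL} along \eqref{eq:triple} and Kato's divisibility to exclude the degenerate case $L(E,1)\neq 0$; once this is in hand, the argument is essentially a parity computation layered on top of Theorem~A.
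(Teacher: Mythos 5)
Your final step---from Theorem~A's conclusion~(ii), namely $\dim_{\Qp}\Sel_{\rm str}(\bQ,V_pE)=1$, to the statement of Corollary~C via the inequality $\dim\Sel\leq\dim\Sel_{\rm str}+1$ and Nekov\'a\v{r}'s parity theorem---is correct and is a genuinely different route from the paper's. The paper does not deduce Corollary~C from (ii) plus parity: its proof of ${\rm(i)}\Rightarrow{\rm(ii)}$ establishes $\dim_L S=2$ directly by combining the Bertolini--Darmon divisibility in the anticyclotomic main conjecture with Howard's structure theory for the derived-height filtration (showing $e_\rho=2$ and $e_r=0$ for $r\neq\rho$ in the pseudo-decomposition of $X_\infty$), and only then extracts (ii) by observing that $\log_{\omega_E,\pp}$ is nonzero. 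Your observation that the parity theorem, already invoked in the paper for the converse direction ${\rm(ii)}\Rightarrow{\rm(i)}$, can be reused to lift~(ii) to $\dim\Sel=2$ is a clean shortcut that avoids re-entering the Iwasawa-theoretic argument.

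The first step of your proposal, however, is not sound. You want to show that $\kappa_{\al,\al^{-1}}\neq 0$ alone forces $\kappa_{\al,\al^{-1}}\in\Sel(\bQ,V_pE)$, and you propose to do this by arguing that the case $L(E,1)\neq 0$ can be ruled out by ``comparing $\kappa_{\al,\al^{-1}}$ with Kato's own Euler system class.'' There is no mechanism here: if $L(E,1)\neq 0$, Kato's class itself has nonvanishing $\exp^*$ at $p$ and therefore also lies \emph{outside} $\Sel(\bQ,V_pE)$, so no contradiction arises from $\kappa_{\al,\al^{-1}}$ behaving the same way. What is actually needed to set the argument in motion---to make Lemma~4.5, Theorem~4.6, Theorem~4.8 and Corollary~4.7 of the paper applicable---is precisely $L(f\otimes g\otimes h,1)=0$, i.e.\ $L(E,1)=0$. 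In the paper this is supplied by condition~(i) of Theorem~A (membership in the Selmer group, not mere non-vanishing), which yields $L(E,1)=0$ via~\cite{Kato295}, as the remark after Theorem~A makes explicit. So the hypothesis of Corollary~C should be read as shorthand for~(i) rather than as something to be independently derived from $\kappa_{\al,\al^{-1}}\neq 0$; your attempt to derive it does not close the gap, although you are right to flag it.
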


\subsection{Outline of the proofs}
We conclude the Introduction with a sketch of the proof of the implication ${\rm (ii)}\Rightarrow{\rm (i)}$ in Theorem~A, establishing the non-vanishing of 
\[
\kappa_{E,K}:=\kappa_{\alpha,\alpha^{-1}}\in\rmH^1(\bQ,V_pE).
\] 
%which will serve as an illustration of the main innovations of this paper. 
%(Changing the roles of $\pp$ and $\overline{\pp}$ yields the analogous result for $\kappa_{\beta,\beta^{-1}}$.) 
%The proof is based on a new formula for $\kappa_{E,K}$ in terms of the \emph{leading coefficient} of the anticyclotomic $p$-adic $L$-function attached to $E$ over $K$, the derived $p$-adic height of $\kappa_{E,K}$, and the enhanced $p$-adic regulator (as introduced in \cite[Def.~3.10]{DR2.5}) of $E$.

\begin{itemize}
	\item{}\emph{Step 1: Euler system construction of Bertolini--Darmon theta elements.}
\end{itemize}

Denote by $\Gamma_\infty$ the Galois group of the anticyclotomic $\bZ_p$-extension of $K$. Building on %various 
generalizations of Gross' explicit form of Waldspurger's special value formula \cite{walds,gross7}, one can construct a $p$-adic $L$-function $\Theta_{f/K}\in\bZ_p\dBr{\Gamma_\infty}$ interpolating ``square-roots'' of the central critical values $L(E/K,\phi,1)$, as $\phi$ runs over finite order characters of $\Gamma_\infty$ (see \cite{BDmumford-tate,ChHs1}). 
%
%Let $I:={\rm ker}(\bZ_p[[\Gamma^\ac]]\rightarrow\bZ_p)$ be the augmentation ideal, and for every nonzero $\theta\in\bZ_p[[\Gamma^\ac]]$ define the \emph{order of vanishing} of $\theta$ at the trivial character by
%\[
%{\rm ord}_{\chi=\mathds{1}}\theta:=\max\{r\in\bZ_{\geqslant 0}\;:\;\theta\in I^r\}. 
%\]
%
The element $\Theta_{f/K}$ 
%is known to be nonzero by work of Vatsal \cite{vatsal-special} (and its generalization in \cite{ChHs2}), 
has been widely studied in the literature, 
%(see \cite{vatsal-special,bdIMC,pollack-weston}, etc.),   
%for a sample), %of references relevant for this paper), 
but its place in Perrin-Riou's vision \cite{PR:Lp,LZ2}, %Colmez-PR,LZ2},   
whereby $p$-adic $L$-functions ought to arise as the image of %$p$-adic
families of special cohomology classes under generalized Coleman power series maps, remained mysterious.  

Letting $\kappa(f,\bfg\bfh)=\{\kappa(f,\bfg_\ell,\bfh_\ell)\}_\ell$ be the $p$-adic family of diagonal cycle classes giving rise to $\kappa(f,g_{\alpha},h_{\alpha^{-1}})$ in the limit at $\ell\to 1$, in $\S\ref{sec:GKC}$ we prove that   
%taking the form of an equality
\begin{equation}\label{eq:big-ERL}
{\rm Col}^\eta({\rm loc}_p(\kappa(f,\bfg\bfh))=\Theta_{f/K}\cdot(\textrm{nonzero constant}),
\end{equation}
where ${\rm Col}^\eta$ is a generalized Coleman power series map defined in terms of an anticyclotomic variant of Perrin-Riou's big exponential map. %\cite{PR115}. 
The proof of $(\ref{eq:big-ERL})$ combines a refinement of the explicit reciprocity law of Darmon--Rotger \cite{DR2} with a factorization of the $p$-adic triple product $L$-function \cite{hsieh-triple}. 
%resembling $(\ref{eq:factorL})$.

\begin{itemize}
	\item{}\emph{Step 2: Leading coefficient formula and derived $p$-adic heights.} %Kolyvagin's theorem in higher rank.}
\end{itemize}

%In light of the construction of $\kappa_{E,K}$ and the interpolation properties satisfied by ${\rm Col}^\eta$ and $\Theta_{f/K}$, the
Viewing $(\ref{eq:big-ERL})$ as an identity in the power series ring $\bZ_p\dBr{T}\iso\bZ_p\dBr{\Gamma_\infty}$, its value at $T=0$ recovers the implication  
\[
L(E,1)=0\quad\Longrightarrow\quad\kappa_{E,K}\in{\rm Sel}(\bQ,V_pE).
\]
%In other words, viewing it as an equality in the power series ring $\Z_p\dBr{T}$, the above implication is all what can be drawn from $(\ref{eq:big-ERL})$ evaluated at $T=0$. %(where it reduces to $``0=0$'').  
To further 
%, thinking of $(\ref{eq:big-ERL})$ as an equality in the power series ring $\Z_p[[T]]$, to d
deduce %from $(\ref{eq:big-ERL})$ 
the non-vanishing of $\kappa_{E,K}$ we %are led to 
consider the leading coefficient of $(\ref{eq:big-ERL})$ at $T=0$. To that end, let 
\begin{equation}\label{eq:intro-fil}
{\rm Sel}(K,V_pE)=S^{(1)}\supset S^{(2)}\supset\cdots\supset S^{(r)}\supset\cdots\supset S^{(\infty)}
\end{equation}
be the filtration defined by Bertolini--Darmon \cite{bd-117} and Howard \cite{howard-derived}, and the associated derived anticyclotomic $p$-adic height pairings 
\[
h^{(r)}\colon S^{(r)}\times S^{(r)}\rightarrow\Qp.
\] 
%be the $r$-th derived anticyclotomic $p$-adic height pairing of \cite{bd-117,howard-derived}.   
%The work of Cornut--Vatsal implies that $S^{(\infty)}=\{0\}$. 
From the standard properties of $h^{(r)}$, one can easily see that if
\[
{\rm Sel}(\Q,V_pE)={\rm Sel}(K,V_pE)\quad\textrm{and}\quad\dim_{\Qp}{\rm Sel}(\Q,V_pE)=2,
\] 
as we have under the hypotheses of Theorem~A, the filtration $(\ref{eq:intro-fil})$ reduces to
\[
{\rm Sel}(\Q,V_pE)=S^{(1)}=S^{(2)}\cdots =S^{(r)}\quad\textrm{and}\quad S^{(r+1)}=S^{(r+2)}=\cdots =S^{(\infty)}=\stt{0}
\] 
for some %(even) 
$r\geqslant 2$. 
%From \cite[Corollary~4.3]{howard-derived} and the one-sided divisibility in the Iwasawa main conjecture for anticyclotomic $\Zp$-extension by Skinner--Urban \cite{SU}, we conclude that
%\[
%r\geqslant \Ord_{T=0}\theta_p^{\tt BD}(f/K,T).
%\]
Setting 
\[
\rho:=\Ord_{T=0}\Theta_{f/K}(T),
\] 
one can deduce that $r\geqslant\rho$ from the work of Skinner--Urban \cite{SU}; in particular, ${\rm Sel}(\Q,V_pE)=S^{(\rho)}$. 
Based on the explicit Rubin-style formula for derived $p$-adic heights %\cite{rubin-ht, howard-derived} that we 
established in $\S{3}$, we prove that 
%\[
%h^{(\rho)}(\kappa_{E,K},x)=
%\frac{1-\beta_p^{-1}}{1-\al_p^{-1}}\cdot 
%\left(\frac{d}{dT}\right)^\rho\Theta_{f/K}(T)|_{T=0}\cdot\log_p(x)\cdot(\textrm{explicit nonzero constant in $\overline{\Q}$}),
%\]
%for all $x\in S^{(\rho)}=E(\Q)\otimes_\Z\Q_p$. 
for any basis $(P,Q)$ of ${\rm Sel}(\bQ,V_pE)$, the $\rho$-th derived $p$-adic height $h^{(\rho)}(P,Q)$ is non-zero, and 
\[\kappa_{E,K}=\frac{\bar\theta_{f/K}}{h^{(\rho)}(P,Q)}\cdot\left(P\otimes \log_pQ-Q\otimes \log_pP\right)\cdot(\textrm{explicit nonzero constant in $\overline{\Q}$}),\]
where $\bar\theta_{f/K}:=\left(\frac{d}{dT}\right)^\rho\Theta_{f/K}(T)|_{T=0}$ is the leading coefficient of $\Theta_{f/K}$ (see \corref{C:51.M} for the precise statement).
%where $\log_\pp:{\rm Sel}(K,V_pE)\rightarrow\bQ_p$ is the composition of the restriction map ${\rm res}_\pp:{\rm Sel}(K,V_pE)\rightarrow E(K_\pp)\otimes_{\bZ}\bQ_p$ with the formal group logarithm $E(K_\pp)\otimes_{\bZ}\bQ_p\rightarrow\bQ_p$. 
%Since our assumptions immediately imply that the  map $\log_p$ is nonzero, 
This in particular implies the non-vanishing of $\kappa_{E,K}$. 
%The same argument shows that the class $\kappa_{\beta,\beta^{-1}}$ is also nonzero

%On the other hand, the proof of Theorem~B similarly builds on properties of the filtration $(\ref{eq:intro-fil})$ and the leading term formula from $\S{3}$, with a key input from \cite{bdIMC}, and a more careful use of the same arguments yields the proof of Corollary~C.
\sk

{\bf Acknowledgements.} We would like to thank John Coates, Dick Gross, and Barry Mazur for their comments on earlier drafts of this paper.

%\section{Derived $p$-adic heights}
%!TEX root = Kato.tex

\def\Bd{\boldsymbol d}
\def\rmd{{\rm d}}
\newcommand\unip[1]{u(#1)}
\newcommand\aone[1]{\pDII{#1}{1}}
\renewcommand\cond[1]{c(#1)}

\newcommand{\wtd}{\widetilde}
\newcommand{\Dmd}[1]{\langle #1\rangle}
\newcommand{\beq}{\begin{equation}}
\newcommand{\eeq}{\end{equation}}
\def\isoto{\stackrel{\sim}{\to}}
\newcommand{\wh}{\widehat}
\def\hookto{\hookrightarrow}
\def\longto{\longrightarrow}
\def\parref#1{\ref{#1}}
\def\thmref#1{Theorem~\parref{#1}}
\def\expectthmref#1{Expected theorem~\parref{#1}}
\def\propref#1{Proposition~\parref{#1}}
\def\corref#1{Corollary~\parref{#1}}     \def\remref#1{Remark~\parref{#1}}
\def\secref#1{\S\parref{#1}}
\def\lmref#1{Lemma~\parref{#1}}

\def\cH{{\mathcal H}}
\def\uf{\varpi}
\def\bfC{\mathbf C}
\def\frakg{{\mathfrak g}}
\def\frakL{{\mathfrak L}}
\def\imply{\Rightarrow}
\def\Spec{\mathrm{Spec}\,}
\def\padic{$p$-adic}
\def\bfz{{\mathbf z}}
\def\bfw{{\mathbf w}}
\def\bfu{{\mathbf u}}
\def\bfG{{\mathbf G}}
\def\ulQ{{\ul{Q}}}
\def\ulT{\ul{T}}
\def\ulk{\ul{k}}
\def\Qtn{D^\x}
\def\Om{\boldsymbol \omega}
\def\Qx{{Q_1}}
\def\Qy{{Q_2}}
\def\Qz{{Q_3}}
\def\unb{{\rm unb}}
\def\bal{{\rm bal}}
\def\qme{q}
\def\Tau{\boldsymbol\tau}
\def\Eta{\boldsymbol\eta}
\def\addchar{\boldsymbol\psi}
\def\newW{W}
\def\bfL{\bft}
\def\bfa{\mathbf a}
\def\ot{\otimes}
\def\bdsH{{\boldsymbol H}}
\def\bdsf{{\boldsymbol f}}
\def\bdsg{{\boldsymbol g}}
\def\bdsh{{\boldsymbol h}}
\def\cyc{\boldsymbol\varepsilon_{\rm cyc}}
\def\Zp{\Z_p}
\def\rmH{{\rm H}}
\def\IwH{\wh\rmH}
\def\opcpt{U}
\def\Iw{{\rm Iw}}
\def\dR{{\rm dR}}
\def\Tw{{\rm Tw}}
\def\wG{\widetilde{G}}
\def\cl{{\rm cl}}
\def\rig{{\rm rig}}
\def\cris{{\rm cris}}
\def\loc{{\rm loc}}
\def\bfD{{\mathbf D}}
\def\sD{{\mathscr D}}
\def\bfx{{\mathbf x}}
\def\sH{{\mathscr H}}
\def\Exp{\Omega}
\def\cyc{\varepsilon_\cF}
\def\gen{{\boldsymbol \gamma}}
\def\Gm{\wh\bfG_m}
\def\Sel{{\rm Sel}}
\def\frakm{\mathfrak m}
\def\Mod{{\bf Mod}}
\def\om{g}
\def\bfe{\boldsymbol{e}}
\def\COL{{\rm Col}}
\def\ur{{\rm ur}}
\def\base{K}
\def\frakp{{\mathfrak p}}
\def\frakP{{\mathfrak P}}
\newcommand\ol[1]{\overline{#1}}
\newcommand{\bpair}[2]{\left[ #1, #2\right]}
\def\Proj{{\rm pr}}
\def\Qpur{\cQ}
\def\Cp{\bC_p}
\def\et{\rm et}
\def\cris{{\rm cris}}
\def\Fr{\mathrm{Fr}}
\section{Derived $p$-adic height pairings}

In this section, we review the definition of the derived $p$-adic height pairings in \cite{howard-derived}. 

\subsection{Notation and definitions}
Let $p$ be a prime, let $\base$ be a number field, and let $\Sigma$ be a finite set of places of $\base$ containing the archimedean places and the places above $p$. Let $\base_\Sigma$ be the maximal algebraic extension of $\base$ unramified outside $\Sigma$ and set $G_{\base,\Sigma}=\Gal(\base_\Sigma/\base)$. Let $\base_\infty/\base$ be a $\Zp$-extension in $\base_\Sigma$, and assume that all primes above $p$ are totally ramified in $\base_\infty$. Denote by $\base_n$ be the subfield with of $K_\infty$ with $[\base_n\colon \base]=p^n$, and set $\Gamma_n=\Gal(K_n/K)$ and  $\Gamma_\infty=\Gal(K_\infty/K)$. Let $\Lam=\Zp\dBr{\Gamma_\infty}$ and let $\kappa_\Lam\colon G_{\base,\Sigma}\to \Gal(\base_\infty/\base)\to \Lam^\x$ be the tautological character 
$\kappa_\Lam(\sigma)=\sigma|_{\base_\infty}$. Let $\iota\colon \Gamma_\infty\to \Gamma_\infty$ be the involution $\gamma\mapsto \gamma^{-1}$, and for any $\Lam$-module $M$ and $k\in\mathbf{Z}$, let $M\{k\}$ be the $G_{\base,\Sigma}$-module $M$ on which $G_{\base,\Sigma}$ acts via $\kappa_\Lam^k$. 

Let $\cO$ be a local ring finitely generated over $\Zp$, let $\frakm$ be the maximal ideal of $\cO$, and put $\Lam_\cO=\Lam\ot_{\Zp}\cO$. Denote by $\Mod_\cO$ the category of $\cO[G_{\base,\Sigma}]$-modules finite free over $\cO$. For $T$ an object of $\Mod_\cO$ we let $T_{\Lam}=T\ot_\cO\Lam_\cO\{-1\}$ be the $G_{\base,\Sigma}$-module $T\ot_\cO\Lambda_\cO$ twisted by $\kappa_\Lam^{-1}$. Let $\cK$ be the localization of $\Lam_\cO$ at the prime $\frakm\Lam_\cO$, and set $P=\cK/\Lam_\cO$. Likewise we define $T_\cK=T\ot_\cO\cK\{-1\}$ and $T_P=T\ot_{\cO} P\{-1\}$. We shall denote the limits 
\[
\rmH^1(\base_\infty,T):=\dirlim_n\rmH^1(\base_n,T),\quad\quad\wh\rmH^1(\base_\infty,T):=\prolim_n \rmH^1(\base_n,T)
\] 
with respect to the restriction and corestriction maps, respectively. Let \[\Proj_{K_n}\colon \wh\rmH^1(K_\infty,T)\to \rmH^1(K_n,T)\] be the canonical projection map. Throughout we shall make use of the identification
\[
\rmH^1(\base,T_\Lam)=\wh\rmH^1(\base_\infty,T) 
\]
deduced from \cite[Lem.~1.4]{howard-derived} and Shapiro's lemma. 
Let $T^*=\Hom(T,\cO(1))$ %be the dual of $T$, which is again an object of $\Mod_\cO$, 
and denote by $e\colon T\times T^*\to \cO(1)$ the canonical $G_{\base,\Sigma}$-equivariant perfect paring, which uniquely extends to a perfect $G_{\base,\Sigma}$-equivariant pairing
\[
e_\Lam\colon T_\Lam\times T^*_\Lam\to\Lam_\cO(1)
\]
characterized by 
\[
e_\Lam(t\ot \lam_1,s\ot \lam_2)=\lam_1\lam_2^\iota e_\Lam(t,s)\quad
%\text{ for all }\lam_1,\lam_2\in\Lam_\cO.
\]
for all $\lam_1,\lam_2\in\Lam_\cO$. 

For any place $v$ of $K$ and any finite extension $L$ of $K_v$, let $\inv_L:\rmH^2(L,\cO(1))\iso \cO$ be the invariant map and let 
$\pairing_{L}\colon \rmH^1(L,T)\times\rmH^1(L,T^*)\to\cO$ be the perfect pairing $\pair{z}{w}_L:=\inv_L(z\cup w)$, and define the bilinear pairing 
\begin{align*}\pairing_{\base_\infty,v}\colon \rmH^1(\base_v,T_\Lam)\times \rmH^1(\base_v,T^*_\Lam)&\to \rmH^2(\base_v,\Lam_\cO(1))\iso\Lam_\cO
%,\quad \pair{\bfz}{\bfw}_{\base_\infty,v}=\inv_v(e_\Lam(\bfz\cup\bfw)).
\end{align*}
by $\pair{\bfz}{\bfw}_{\base_\infty,v}=\inv_v(e_\Lam(\bfz\cup\bfw))$. 
Fix a topological generator $\gen$ of $\Gamma_\infty$ and set
\[
\om_n:=\boldsymbol{\gamma}^{p^n}-1\in\Lam.
\]
Thus $\Lam_\cO/(g_n)\iso\cO[\Gamma_n]$, and if $\bfz=(\bfz_n)\in\rmH^1(K_v,T_\Lam)=\prolim_n\rmH^1(\base_{n,v},T)$ and $\bfw=(\bfw_n)\in\rmH^1(\base_v,T_\Lam^*)=\prolim_n\rmH^1(\base_{n,v},T^*)$, then  
\beq\label{E:pairing1.H}\pair{\bfz}{\bfw}_{\base_{\infty,v}}\pmod{\om_n}=\sum_{\tau\in\Gamma_n}\pair{\bfz_n^{\tau^{-1}}}{\bfw_n}_{\base_{n,v}}\tau.\eeq

Let $\cF=\stt{\rmH^1_\cF(\base_v,T_\cK)}_{v\in\Sigma}$ be a Selmer structure on $T_\cK$, namely a choice of $\cK$-submodule $\rmH^1_\cF(\base_v,T_\cK)\subset\rmH^1(\base_v,T_\cK)$ for every $v\in\Sigma$, and let $\rmH^1_\cF(\base_v,T_P)$ be the image of the natural  map $\rmH^1_\cF(\base_v,T_\cK)\to \rmH^1(\base_v,T_P)$ induced by the quotient $\cK\to P$.  Define the Selmer module $\rmH^1_\cF(\base,T_P)$ to be the kernel of the map
\[
\rmH^1(G_{\base,\Sigma},T_P)\to \prod_{v\in\Sigma}\rmH^1(\base_v,T_P)/H^1_\cF(\base_v,T_P).
\]

\subsection{Abstract Rubin formula}
\label{subsec:height}

In this subsection, we suppose that $\frakm^m=0$ for some $m>0$, namely that $\cO$ is Artinian. By \cite[Lem.~1.2]{howard-derived}, we then have
\[
K=\bigcup_{n=0}^\infty\Lam_\cO\frac{1}{\om_n}.
\]   
Moreover, by \cite[Lem.~1.5]{howard-derived}  and Shapiro's lemma, there is a natural isomorphism
%\beq\label{E:iso1.H}\eta_{\gen}\colon \rmH^1(F,T\ot_{\cO}\Lam \om_n^{-1}/\Lam)\iso\rmH^1(F,T\ot_\cO\Lam/\om_n)\iso\rmH^1(F_n,T).\eeq We obtain an isomorphism
\beq\label{E:iso1.H}\eta_{\gen}\colon \rmH^1(\base,T_P)=\dirlim_{n}\rmH^1(\base,T_\Lam\ot\Lam_\cO\om_n^{-1}/\Lam_\cO)\iso\dirlim_n\rmH^1(\base_n,T_\Lam/\om_n T_\Lam)=\rmH^1(\base_\infty,T).\eeq
By definition, for  $\bfz=\stt{\bfz_n}\in\wh\rmH^1(\base_\infty,T)$ we have 
\beq\label{E:iso2.H}\eta_\gen(\frac{\bfz}{\gen-1})=\Proj_\base(\bfz)\in\rmH^1(\base,T).\eeq
%\[e_n(\lam t,s)=\lam e_n(t,s)=e_n(t,\lam^\iota s)\text{ for }\lam\in\Lam_n.\]
%\begin{align*}e_n(t,s)=&\sum_{\gamma,x\in\Gamma_n}e(\mu_t(x),\mu_s(x\gamma^{-1}))\gamma\\
%=&\sum_{\gamma,x\in\Gamma_n}e(x^{-1}t(x),\gamma x^{-1}s(x\gamma^{-1}))\gamma.
%\end{align*}
%\begin{lm}There is a natural isomorphism $\rmH^1(F,T_{\Lam}/\om_n)\iso \rmH^1(F_n,T)$. 
%\end{lm}
%\begin{proof}
%For $T$ an object in $\Mod_\cO$, put
%\[T_n:=\Ind_{G_{F_n}}^{G_F}T=\stt{f\colon G_F\to T\mid f(hg)=hf(g),\,h\in G_{F_n}}.\] 
%Then $\sigma\in G_F$ and $\gamma\in\Gamma_n$ acts on $f\in T_n$  by
%\begin{align*} \sigma f(g)&=f(g\sigma);\\
%(\gamma* f)(g)&=\wtd\gamma f(\wtd\gamma^{-1} g),\quad
%\wtd \gamma\in G_F\text{ such that }\wtd\gamma|_{F_n}=\gamma,\end{align*}
%and define a function $\mu_f\colon \Gamma_n\to T$ by $\mu_f(\gamma):=(\gamma^{-1} *f)(1)$.
%There is a natural isomorphism as $\Lam_n[G_F]$-module
%\[\theta\colon T_n\iso T\ot_\cO\Lam_n,\quad f\mapsto \sum_{\gamma\in \Gamma_n}\mu_f(\gamma)\ot\gamma=\sum_{\gamma\in\Gamma_n}\wtd\gamma^{-1} f(\wtd\gamma)\ot\gamma.\] \end{proof}
For each $n$, let $\rmH^1_\cF(\base_n,T)$ be the Selmer module consisting of classes $s\in \rmH^1(\base_n,T)$ such that the image of $s$ in $\rmH^1(\base_\infty,T)$ belongs to $\eta_\gen(\rmH^1_\cF(\base,T_P))$. Thus \[\rmH^1_\cF(\base_\infty,T)=\dirlim_n\rmH^1_\cF(\base_n,T)=\eta_\gen(\rmH^1_\cF(\base,T_P)).\] Let $J$ be the augmentation ideal of $\Lam_\cO$, i.e.,   
the principal ideal of $\Lam_\cO$ generated by $\gen-1$, and  
for $r>0$ put
\beq\label{E:1.H}Y_T^{(r)}:=\rmH^1_\cF(\base_\infty,T)[J]\cap J^{r-1}\rmH^1_\cF(\base_\infty,T).\eeq
This defines a decreasing filtration $Y_T^{(1)}\supset Y_T^{(2)}\supset Y_T^{(2)}\supset\cdots$.

Let $\cF^\perp=\stt{\rmH^1_{\cF^\perp}(\base_v,T^*_K)}_{v\in\Sigma}$ be the Selmer structure on $T^*_K$ such that $\rmH^1_{\cF^\perp}(\base_v,T^*_K)$ and $\rmH^1_\cF(\base_v,T_K)$ are orthogonal complements under local Tate duality for every $v\in\Sigma$, and let
\[
[-,-]_{\rm CT}\colon \rmH^1_\cF(\base,T_P)\times \rmH^1_{\cF^\perp}(\base,T^*_P)\to P
\]
be the $\Lam_\cO$-adic Cassels--Tate pairing of \cite[Thm.~1.8]{howard-derived}. The \emph{$r$-th derived height pairing} 
\[
h_\cO^{(r)}(-,-)\colon Y_T^{(r)}\times Y_{T^*}^{(r)}\to J^r/J^{r+1}
\]
in \cite[Def.~2.2]{howard-derived} is defined by 
\begin{align*}
h_\cO^{(r)}(z,w):=&(\gen-1)^2\cdot [\eta_{\gen}^{-1}(z),\eta_{\gen}^{-1}(w)]_{\rm CT}\\
=&(\gen-1)^{r+1}\cdot[\eta_{\gen}^{-1}(u),\eta_{\gen}^{-1}(w)]_{\rm CT},
%\quad\textrm{where $(\gen-1)^{r-1}u=z$,\,$u\in\rmH^1_\cF(F,T_P)$.}
\end{align*}
writing $z=(\gen-1)^{r-1}u$ with $u\in\rmH^1_\cF(\base,T_P)$. 
Note that $[\eta_{\gen}^{-1}(u),\eta_{\gen}^{-1}(w)]_{\rm CT}\in (\gen-1)^{-1}\Lam/\Lam$, so that $h_\cO^{(r)}(z,w)$ belongs to $J^{r}/J^{r+1}$. 

The following is a restatement of \cite[Thm.~2.5]{howard-derived}, which can be viewed as an abstract generalization of Rubin's formula \cite[Thm. 3.2(ii)]{rubin-ht} (\emph{cf.} \cite[Prop.~11.5.11]{nekovar310}). 
\begin{prop}\label{P:12.H}
Let $z\in Y_T^{(r)}$ and $w\in Y_{T^*}^{(r)}$. Suppose that there exist $\bfz\in \rmH^1(\base,T_\Lam)$ and $\bfw_\Sigma=(\bfw_v)\in\bigoplus_{v\in\Sigma}\rmH^1_{\cF^\perp}(\base_v,T^*_\Lam)$ such that $\Proj_\base(\bfz)=z$ and $\Proj_{\base_v}(\bfw_v)=\loc_v(w)$. Then \[
h_\cO^{(r)}(z,w)=-\sum_{v\in\Sigma}\pair{\bfz}{\bfw_v}_{\base_{\infty,v}}\pmod{J^{r+1}}.
\]
\end{prop}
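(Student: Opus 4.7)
The plan is to derive this $\Lam_\cO$-adic Rubin-style formula by unwinding Howard's construction of the Cassels--Tate pairing $[-,-]_{\rm CT}$ and feeding in the explicit global and local Iwasawa lifts $\bfz$ and $\bfw_v$ supplied by the hypothesis. First, I would write $\eta_\gen^{-1}(z)=(\gen-1)^{r-1}u$ with $u\in\rmH^1_\cF(\base,T_P)$, and set $w''=\eta_\gen^{-1}(w)\in\rmH^1_{\cF^\perp}(\base,T^*_P)$. By the second equivalent form of the definition of $h_\cO^{(r)}$ recalled just above the statement, proving the proposition reduces to showing
\[
(\gen-1)^{r+1}[u,w'']_{\rm CT}\equiv -\sum_{v\in\Sigma}\pair{\bfz}{\bfw_v}_{\base_{\infty,v}}\pmod{J^{r+1}}.
\]

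The key observation is that $\bfz$ already produces a canonical global $\cK$-coefficient lift of $u$. Setting $\tilde u:=(\gen-1)^{-r}\bfz\in\rmH^1(\base,T_\cK)$, which makes sense since $(\gen-1)$ is invertible in $\cK$, I would check via \eqref{E:iso2.H} that $\eta_\gen(\bfz/(\gen-1))=\Proj_\base(\bfz)=z=\eta_\gen((\gen-1)^{r-1}u)$, so that $\bfz/(\gen-1)\equiv(\gen-1)^{r-1}u$ in $\rmH^1(\base,T_P)$ under the identification $T_P = T_\cK/T_\Lam$, and hence $\tilde u$ reduces to $u$ modulo $\rmH^1(\base,T_\Lam)$. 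With this lift in hand, I would unwind Howard's description of $[u,w'']_{\rm CT}$ from \cite[Thm.~1.8]{howard-derived}: for each $v\in\Sigma$, one compares $\loc_v(\tilde u)$ with a local $\cF$-lift $\tilde u_v\in\rmH^1_\cF(\base_v,T_\cK)$ of $\loc_v(u)$, producing a difference $\beta_v\in\rmH^1(\base_v,T_\Lam)$ (viewed inside $\rmH^1(\base_v,T_\cK)$), which is then paired with $w''$ via the $P$-valued local duality. Invoking the orthogonality between $\rmH^1_\cF(\base_v,T_\cK)$ and $\rmH^1_{\cF^\perp}(\base_v,T^*_\cK)$ under local Tate duality, together with the fact that $\bfw_v\in\rmH^1_{\cF^\perp}(\base_v,T^*_\Lam)$ lifts $\loc_v(w)$, each such local contribution can be rewritten as $(\gen-1)^{-(r+1)}\pair{\loc_v(\bfz)}{\bfw_v}_{\base_{\infty,v}}$ modulo $\Lam_\cO$.

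Summing over $v\in\Sigma$ and clearing denominators by multiplying by $(\gen-1)^{r+1}$ then yields the asserted formula, with the minus sign originating from the standard sign convention in the Cassels--Tate construction (the direction of the connecting map attached to $0\to\Lam_\cO\to\cK\to P\to 0$ combined with the anti-commutativity of cup products). I expect the main technical difficulty to lie in the careful bookkeeping of powers of $(\gen-1)$ and in matching Howard's $\Lam_\cO$-adic Cassels--Tate pairing with the Iwasawa local pairing $\pair{\cdot}{\cdot}_{\base_{\infty,v}}$ defined via $\inv_v\circ e_\Lam$; the compatibility \eqref{E:pairing1.H}, relating $\pair{\cdot}{\cdot}_{\base_{\infty,v}}$ modulo $\om_n$ to the finite-level Tate pairings, is what anchors this matching. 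Obtaining the precision modulo $J^{r+1}$ (rather than some higher power of $J$) requires these compatibilities to be exact, not merely up to lower-order error.
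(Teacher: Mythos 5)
Your core idea is the same as the paper's: feed the supplied Iwasawa classes $\bfz$ and $\bfw_\Sigma$ into Howard's construction to produce explicit $\cK$-coefficient lifts, which collapses the Cassels--Tate pairing to the local Iwasawa cup products. But the routes diverge, and yours is both more roundabout and, as written, has a soft spot.

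The paper works directly with Howard's explicit \emph{cochain-level} formula for $[-,-]_{\rm CT}$ (the formula requiring a choice of cochain lift $\wtd y$ of $y=\eta_\gen^{-1}(z)$, a $2$-cochain $\ep_0$ with $d\ep_0=d\wtd y\cup\wtd t$, and local cocycle lifts $\wtd t_\Sigma$), and uses the first form $h^{(r)}_\cO(z,w)=(\gen-1)^2[y,t]_{\rm CT}$. The decisive simplification is that, because $\wtd\bfz$ is a genuine $T_\Lam$-cocycle, the $\cK$-cochain $\wtd y:=\wtd\bfz/(\gen-1)$ is automatically a \emph{cocycle}, so $d\wtd y=0$ and one may take $\ep_0=0$; similarly $\wtd t_\Sigma=\wtd\bfw_\Sigma/(\gen-1)$. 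The whole computation then reduces to a one-line sesquilinearity calculation: $(\gen-1)^2\cdot e_\Lam\bigl(\tfrac{a}{\gen-1},\tfrac{b}{\gen-1}\bigr)=\tfrac{(\gen-1)^2}{(\gen-1)(\gen-1)^\iota}\,e_\Lam(a,b)=-\gen\cdot e_\Lam(a,b)$, and $-\gen\equiv -1\pmod J$. That identity is exactly where the minus sign comes from; it is not a ``standard sign convention'' but a concrete consequence of the semi-linearity of $e_\Lam$ under the involution $\iota$.

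Your proposal instead uses the second form $(\gen-1)^{r+1}[u,w'']_{\rm CT}$ together with the class $\tilde u=(\gen-1)^{-r}\bfz\in\rmH^1(\base,T_\cK)$ and then invokes a ``compare the global $\cK$-lift with a local $\cF$-lift, pair the difference with $w''$'' description of the Cassels--Tate pairing. That is not Howard's stated definition (which is the cochain formula above), and you would first have to establish the equivalence of the two descriptions before the rest of your argument can proceed; as written, this step is a real gap. You also do not point out that the reason the argument works at all is that $\tilde u$ is represented by an honest cocycle, which is what makes the $\ep_0$-term (or its analogue in your reformulation) vanish. Finally, the bookkeeping of powers of $(\gen-1)$ in the second form is genuinely more delicate than with $(\gen-1)^2$, since one must track the extra $(\gen-1)^{r-1}$ both through the CT pairing and through the identification $\cO\dBr{\Gamma_\infty}\to J^r/J^{r+1}$; the paper avoids this entirely by staying with the first form. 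I'd recommend redoing the argument directly from Howard's cochain formula as the paper does: it is shorter and removes both the unproven equivalence and the sign hand-waving.
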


\begin{proof}Let $y=\eta_\gen^{-1}(z)\in \rmH^1_\cF(\base,T_P)$ and $t=\eta_\gen^{-1}(w)\in\rmH^1_{\cF^\perp}(\base,T^*_P)$. Choose cochains $\wtd y\in C^1(G_{\base,\Sigma},T_\cK)$ and $\wtd t\in C^1(G_{\base,\Sigma},T^*_\cK)$ lifting $s$ and $t$, respectively, let $\ep_0\in C^2(G_{\base,\Sigma},P(1))$ be such that $d\ep_0=d\wtd y\cup \wtd t$, and choose $\wtd t_\Sigma\in \bigoplus_{v\in\Sigma}Z^1(G_{\base_v},T^*_\cK)$ lifting $\loc_\Sigma(\wtd t)\in\bigoplus_{v\in\Sigma}Z^1(\base_v,T^*_P)$. According to the definition of the Cassels--Tate pairing \cite[(2), p.~1321]{howard-derived}, we find that
\beq\label{E:CT.H}\begin{aligned}
h_\cO^{(r)}(z,w)&=(\gen-1)^2\cdot [y,t]_{\rm CT}
=(\gen-1)^2\cdot \inv_\Sigma(\loc_{\Sigma}(\wtd y)\cup \wtd t_\Sigma-\loc_\Sigma(\ep_0)).\end{aligned}
\eeq

Let $\wtd\bfz\in Z^1(G_{\base,\Sigma},T_{\Lam})$ and $\wtd\bfw_\Sigma\in \bigoplus_{v\in\Sigma}Z^1(G_{\base_v},T^*_\Lam)$  be cocycles representing $\bfz$ and $\bfw_\Sigma$. %Choose an $N$ large enough such that $\om_{N}$ is divisible by $(\gen-1)^r$ in $\Lam_\cO$. Write $\om_{N}=(\gen-1)^rH$ for some $H\in \Lam_\cO$. Let \[\wtd y:=\frac{H\cdot \wtd \bfz}{\om_{N}}\in Z^1(G_{\base,\Sigma},T\ot \Lam_\cO\om_N^{-1}/\Lam_\cO)\] and let $y\in \rmH^1(G_{\base,\Sigma},T_P)$ be the class represented by $\wtd y$. 
Then $\wtd y=\wtd\bfz/(\gen-1)$ and $\wtd t_\Sigma=\wtd\bfw_{\Sigma}/(\gen-1)$ are liftings of $z$ and $t_\Sigma$, and 
using \eqref{E:CT.H} with $\ep_0=0$ (note that $d\wtd \bfz=0$), we obtain
\begin{align*}h_\cO^{(r)}(z,w)%&=(\gen-1)^{r+1}\cdot [s,\eta_{\gen}^{-1}(w)]_{\rm CT}\\
%&=(\gen-1)^{2}\frac{H}{\om_N\cdot (\gamma_\star^{-1}-1)}\inv_{\Sigma}(\loc_\Sigma(\bfz)\cup \bfw_\Sigma)\\
&=(\gen-1)^2\cdot\inv_{\Sigma}(e_\Lam(\frac{\loc_\Sigma(\wtd\bfz)}{\gen-1}\cup \frac{\wtd\bfw_\Sigma}{\gen-1}))\\%\in J^r/J^{r+1}\\
&=-\inv_{\Sigma}(e_\Lam(\loc_\Sigma(\bfz)\cup \bfw_\Sigma))=-\sum_{v\in\Sigma}\pair{\bfz}{\bfw_v}_{\base_{\infty,v}}\pmod{J^{r+1}}.\end{align*}
This completes the proof.
\end{proof}

\subsection{Derived $p$-adic heights for elliptic curves}\label{SS:derived}

Let $E$ be an elliptic curve over $\base$ with good ordinary reduction at every place above $p$. Let $T=\varprojlim_kE[p^k]$ be the $p$-adic Tate module of $E$, and take $\Sigma$ to consist of the archimedean places, the places above $p$, and the places of bad reduction of $E$. Let $T_k=E[p^k]$, and consider the modules $Y_{T_k}^{(r)}$ 
defined in \eqref{E:1.H} taking for $\cF$ the Selmer structure in \cite[Def.~3.2]{howard-derived}. Since $T_k^*=T_k$ and $\cF^\perp=\cF$ by the Weil pairing, the discussion of $\S\ref{subsec:height}$ yields a derived height pairing $h_{\Z/p^k\Z}^{(r)}$ on $Y_{T_k}^{(r)}\times Y_{T_k}^{(r)}$. 
The constructions of $Y_{T_k}^{(r)}$ and $h_{\Z/p^k\Z}$ are clearly compatible  
under the quotient map $\Z/p^{k+1}\Z\to \Z/p^k\Z$, and in the limit they define
\[
Y_T^{(r)}:=\prolim_k Y_{T_k}^{(r)},\quad h^{(r)}:=\prolim_k h_{\Z/p^k\Z}^{(r)}.
\]

According to \cite[Lem.~4.1]{howard-derived} there is canonical isomorphism
\begin{equation}\label{eq:how-iso}
Y_T^{(1)}\ot_{\Zp}\Qp\simeq{\rm Sel}(\base,V_pE).
\end{equation}
Letting $S_p^{(r)}(E/\base)$ be the subspace of ${\rm Sel}(\base,V_pE)$ spanned by the image of $Y_T^{(r)}\subset Y_T^{(1)}$ under the  isomorphism $(\ref{eq:how-iso})$, we have $S_p^{(1)}(E/\base)={\rm Sel}(\base,V_pE)$ and for every $r>0$ we obtain the $r$-th derived $p$-adic height pairing 
\[h^{(r)}: S_p^{(r)}(E/\base)\times S_p^{(r)}(E/\base)\to J^r/J^{r+1}\ot_{\Zp}\Qp,\] 
where $J$ is the augmentation ideal of $\Lam=\Zp\dBr{\Gamma_\infty}$.

\begin{cor}\label{C:13.H}
Let $z,w\in S_p^{(r)}(E/\base)$. Suppose that there exist a global class $\bfz\in \wh\rmH^1(\base_\infty,T)$ and local classes $\bfw_v\in\prolim_n\rmH^1_{\rm fin}(\base_{n,v},T)$ for every $v\mid p$ such that $\Proj_\base(\bfz)=z$ and $\Proj_{\base_v}(\bfw_{v})=\loc_v(w)$. Then 
\[
h^{(r)}(z,w)=-\sum_{v\mid p}\pair{\loc_v(\bfz)}{\bfw_v}_{\base_{\infty,v}}\pmod{J^{r+1}}.
\]
\end{cor}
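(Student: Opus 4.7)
The plan is to specialize \propref{P:12.H} to the Artinian quotients $T_k=E[p^k]$ with $\cO=\Z/p^k\Z$, and then pass to the inverse limit over $k$, inverting $p$ at the end. Both $h^{(r)}$ and the Iwasawa local pairings $\pairing_{\base_\infty,v}$ appearing in the corollary are constructed in \secref{SS:derived} as precisely such inverse limits of their Artinian analogues, so it should be enough to establish the corresponding congruence at each finite level $k$ and then assemble.

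At level $k$, the hypothesis gives a global class $\bfz^{(k)}\in\rmH^1(\base,T_{k,\Lam})$ lifting $z_k:=z\bmod p^k$ and, for each $v\mid p$, a local class $\bfw_v^{(k)}\in\rmH^1_{\cF^\perp}(\base_v,T^*_{k,\Lam})$ lifting $\loc_v(w_k)$; here one uses $T_k^*=T_k$ via the Weil pairing and the identification of the finite-part Iwasawa cohomology with the local Iwasawa-level dual Selmer condition at $v\mid p$. To invoke \propref{P:12.H} I would additionally choose local lifts $\bfw_v^{(k)}\in\rmH^1_{\cF^\perp}(\base_v,T^*_{k,\Lam})$ at the remaining places $v\in\Sigma$ with $v\nmid p$, and verify that their contribution to $\sum_{v\in\Sigma}\pair{\loc_v(\bfz^{(k)})}{\bfw_v^{(k)}}_{\base_{\infty,v}}$ disappears modulo $J^{r+1}$ upon taking the limit in $k$ and inverting $p$.

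For archimedean $v$, this is immediate since $\rmH^1(\base_v,T_k^*)=0$ for $p$ odd. For finite $v\nmid p$ in $\Sigma$ (the primes of bad reduction of $E$), the extension $\base_\infty/\base$ is unramified at $v$ and the decomposition group of $v$ in $\Gamma_\infty$ has finite index, so $\widehat\rmH^1(\base_{\infty,v},T)$ is a finitely generated $\Z_p$-module, hence $\Lam$-torsion. My plan here is to choose a Selmer-compatible lift $\bfw_v^{(k)}$, obtained by propagating $\loc_v(w_k)$ along the tower of finite layers, and then exploit the orthogonality of $\cF$ and $\cF^\perp$ under local Tate duality together with the torsion structure of the local Iwasawa cohomology at $v$ to conclude that the bad-prime pairings lie in a bounded $\Z_p$-torsion submodule of $\Lam_\cO/J^{r+1}$, which vanishes after tensoring with $\Q_p$ upon passage to the limit in $k$.

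The hard part will be this last step: rigorously verifying that the contributions at $v\nmid p$ vanish after inverting $p$. I expect this to require a careful local Iwasawa-theoretic analysis at each prime of bad reduction, in the spirit of Howard's treatment in \cite{howard-derived}. Once this technical point is granted, combining the contributions at $v\mid p$ and taking the inverse limit in $k$ (followed by inverting $p$) would yield the stated congruence, completing the proof.
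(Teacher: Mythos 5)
Your high-level strategy — specialize Proposition~\ref{P:12.H} at each Artinian level $T_k=E[p^k]$ and pass to the limit over $k$, then kill the contributions at places $v\in\Sigma$ with $v\nmid p$ — is the right one, and the archimedean case is handled correctly. But you have missed the fact that makes the whole thing a one-liner, and as a result you declare the bad-prime step to be the ``hard part'' and leave it unresolved.

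The decisive observation, which is what the paper uses, is that $\rmH^1(\base_{n,v},T)\ot_{\Zp}\Qp=\rmH^1(\base_{n,v},V_pE)=0$ for every $v\nmid p$ and every $n$. Indeed, for $v$ a finite place of bad reduction with $v\nmid p$ one has $(V_pE)^{G_{\base_{n,v}}}=0$ (in the good and additive cases because Frobenius eigenvalues are not $1$ after restricting inertia; in the split or non-split multiplicative case because the Tate parameter has positive valuation, so the $\Qp(1)$-by-$\Qp$ extension never splits over a finite extension), and then the local Euler characteristic formula together with local duality and the Weil self-duality $V_pE^*(1)\simeq V_pE$ force $\rmH^1(\base_{n,v},V_pE)=0$; archimedean places contribute only $2$-torsion, killed by $p>2$. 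Consequently $\loc_v(w)=0$ for every $v\nmid p$ (since $w$ lives in a $\Qp$-vector space), and one may simply take $\bfw_v=0$ at those places in Proposition~\ref{P:12.H}. No analysis of the $\Lambda$-module structure of $\widehat\rmH^1(\base_{\infty,v},T)$, no propagation of Selmer-compatible lifts, and no boundedness-of-torsion argument is needed. Your proposed route at bad primes might be completable, but as written it is a genuine gap, and it is a detour around a fact that is both standard and stated as the whole content of the paper's proof.
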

\begin{proof} This follows from \propref{P:12.H} and the fact that $\rmH^1(\base_{n,v},T)\ot\Qp=0$ for $v\ndivides p$.
\end{proof}

\section{Perrin-Riou's theory for Lubin--Tate formal groups}

In this section we explicitly compute the derived $p$-adic height pairings for elliptic curves via Perrin-Riou's big exponential maps.

\subsection{Preliminaries}

We begin by reviewing the generalization of Perrin-Riou's theory \cite{PR115} to Lubin--Tate formal groups developed in \cite{kobayashi-pr}. Throughout we fix a completed algebraic closure $\Cp$ of $\Qp$. Let $\Qp^\ur\subset \Cp$ be the maximal unramified extension of $\Qp$ and let $\Fr\in\Gal(\Qp^\ur/\Qp)$ be the absolute Frobenius. Let $F/\Qp$ be a finite unramified extension and let $\cO=\cO_F$ be the valuation ring of $F$. Put
\[R:=\cO\dBr{X}.\] 
 
Let $\cF=\Spf R$ be a relative Lubin--Tate formal group of height one defined over $\cO$, and for each $n\in\Z$ set $\cF^{(n)}:=\cF\times_{\Spec\cO,\Fr^{-n}}\Spec\cO$. The Frobenius morphism $\varphi_\cF\in\Hom(\cF,\cF^{(-1)})$ induces a homomorphism $\varphi_\cF\colon R\to R$ defined by \[\varphi_\cF(f):=f^{\Fr}\circ\varphi_\cF,\] 
where $f^{\Fr}$ is the conjugate of $f$ by $\Fr$. Let $\psi_\cF$ be the left inverse of $\varphi_\cF$ %, so that $\psi_\cF\varphi_\cF (f)=f$ and
satisfying 
\[
\varphi_\cF\psi_\cF (f)=p^{-1}\sum_{x\in \cF[p]}f(X\oplus_\cF x).
\] 
Let $F_\infty=\bigcup_{n=1}^\infty F(\cF[p^n])$ be the Lubin--Tate $\Zp^\x$-extension associated with the formal group $\cF$, and for every $n\geqslant -1$, let $F_n$ be the subfield of $F_\infty$ with $\Gal(F_n/F)\iso(\Z/p^{n+1}\Z)^\times$ (so $F_{-1}=F$). Letting $G_\infty=\Gal(F_\infty/F)$, there is a unique decomposition $G_\infty=\Delta\times \Gamma^\cF_\infty$, where $\Delta\iso\Gal(F_0/F)$ is the torsion subgroup of $G_\infty$ and $\Gamma^\cF_\infty\iso \Zp$. 

For every $a\in\Zp^\x$, there is a unique formal power series $[a](X)\in R$ such that 
\[
[a]^{\Fr}\circ\varphi_\cF =\varphi_\cF\circ [a]\quad\textrm{and}\quad
[a](X)\con aX\pmod{X^2}. 
\]
Letting $\varepsilon_\cF\colon G_\infty\isoto\Zp^\x$ be the Lubin--Tate character, we let $\sigma\in G_\infty$ act on $R$ by 
\[
\sigma\cdot f(X):=f([\varepsilon_\cF(\sigma)](X)),
\] 
thus making $R$ into an $\cO\dBr{G_\infty}$-module.

\begin{lm}%[{\cite[Prop.~5.4]{kobayashi-pr}}]
	\label{lem:rk1}
	$R^{\psi_\cF=0}$ is free of rank one over $\cO\dBr{G_\infty}$.
\end{lm}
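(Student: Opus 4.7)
The plan is to prove the lemma by adapting Coleman's classical isomorphism $\Zp\dBr{X}^{\psi=0} \iso \Zp\dBr{\Gamma^{\mathrm{cyc}}}$ from the cyclotomic case to the Lubin--Tate setting, following the strategy of Schneider--Venjakob. First I would establish the direct sum decomposition $R = \varphi_\cF(R) \oplus R^{\psi_\cF=0}$ of $\cO$-modules. This is immediate from $\psi_\cF \circ \varphi_\cF = \mathrm{id}_R$, which makes $e := \varphi_\cF\psi_\cF$ an idempotent endomorphism of $R$ with image $\varphi_\cF(R)$ and kernel precisely $R^{\psi_\cF=0}$. Combined with the fact that $R$ is free of rank $p$ over $\varphi_\cF(R)$ (since $[p]_\cF(X)$ is a Weierstrass polynomial of degree $p$), this identifies $R^{\psi_\cF=0}$ as a free $\varphi_\cF(R)$-module of rank $p-1$.

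Next I would exhibit a distinguished element $\omega_\cF \in R^{\psi_\cF=0}$ as candidate generator. In the cyclotomic case $\omega = 1+X$ works: using $(1+X) \oplus_{\widehat{\mathbb{G}}_m} (\zeta-1) = \zeta(1+X)$ and $\sum_{\zeta^p=1}\zeta = 0$, one gets $\psi(1+X) = 0$ by direct computation. For general Lubin--Tate $\cF$, the analogous $\omega_\cF$ is built from a compatible system of torsion points $(t_n)_n$ of $\cF$ (equivalently, from the Coates--Wiles logarithm of $\cF$), and $\psi_\cF(\omega_\cF) = 0$ then follows from the trace formula $\varphi_\cF\psi_\cF(f)(X) = p^{-1}\sum_{x \in \cF[p]} f(X \oplus_\cF x)$ together with the fact that the torsion-point sum vanishes.

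Finally I would verify that the $\cO\dBr{G_\infty}$-linear map
\[
\alpha\colon \cO\dBr{G_\infty}\longrightarrow R^{\psi_\cF=0},\qquad \lambda\longmapsto \lambda\cdot\omega_\cF
\]
is an isomorphism. Injectivity would follow from the faithfulness of the Lubin--Tate character $\varepsilon_\cF\colon G_\infty \iso \Zp^\times$, which makes the translates $\sigma\cdot\omega_\cF = [\varepsilon_\cF(\sigma)](\omega_\cF)$ linearly independent. Surjectivity I would check either by an Amice-type transform identifying $\cO\dBr{G_\infty}$ with a space of measures on $\Zp^\times$, or more structurally by a topological Nakayama argument: reduce modulo the maximal ideal of $\cO\dBr{G_\infty}$ and match $\mathbb{F}_p$-dimensions against the free rank $p-1$ computation over $\varphi_\cF(R)$ from Step 1 after a further reduction.

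The main obstacle is the freeness/surjectivity step. In the cyclotomic case it can be handled by an explicit Mahler-basis computation on $\Zp\dBr{X}^{\psi=0}$, but the Lubin--Tate analogue is genuinely more delicate since no element as elementary as $1+X$ is available. In practice one invokes the étale $(\varphi_\cF, G_\infty)$-module theory of Kisin--Ren and Schneider--Venjakob, where $R$ is recognized as a rank-one étale module and its $\psi_\cF$-kernel is identified with the claimed free rank-one module over the Iwasawa algebra.
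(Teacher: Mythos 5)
The paper itself does not prove this lemma; it simply cites \cite[Prop.~5.4]{kobayashi-pr}, so there is no ``paper's proof'' to compare against directly. Your outline --- decompose $R=\varphi_\cF(R)\oplus R^{\psi_\cF=0}$ via the idempotent $\varphi_\cF\psi_\cF$, observe that $R^{\psi_\cF=0}$ is free of rank $p-1$ over $\varphi_\cF(R)$, exhibit a candidate generator, and then check the $\cO\dBr{G_\infty}$-linear evaluation map is an isomorphism --- is the right shape and matches the spirit of the cited result, so the strategy is not wrong.

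There are, however, two concrete issues beyond the gap you already flag. First, a generator ``built from a compatible system of torsion points $(t_n)$'' or ``from the Coates--Wiles logarithm'' will not live in $R=\cO_F\dBr{X}$. The comparison with $\widehat{\mathbf{G}}_m$ that underlies all such constructions is an isomorphism $\rho\colon\widehat{\mathbf{G}}_m\isoto\cF$ defined only over $\cW$, the ring of integers of $\widehat{\Qp^{\mathrm{ur}}}$; indeed later in the section the paper introduces $\rho(1+X)$ only as an element of $R\ot_\cO\cW$ and writes $\rho(1+X)=h_{\bfe}\cdot\bfe$ with $h_{\bfe}\in\cW\dBr{G_\infty}$. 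So your candidate $\omega_\cF$ gives a generator of $(R\ot_\cO\cW)^{\psi_\cF=0}$ over $\cW\dBr{G_\infty}$; to obtain the stated freeness over $\cO\dBr{G_\infty}$ you still need a descent step from $\cW$ to $\cO_F$, say by faithfully flat descent of the property ``free of rank one'' or a topological Nakayama argument directly on the $\cO\dBr{G_\infty}$-module $R^{\psi_\cF=0}$. You do not address this, and it is exactly the part where one cannot write the generator down explicitly. Second, there is a genuine error in your injectivity sketch: with the $G_\infty$-action of the paper, $\sigma\cdot\omega_\cF=\omega_\cF([\varepsilon_\cF(\sigma)](X))$, which is \emph{not} the same as $[\varepsilon_\cF(\sigma)](\omega_\cF)$ except for very special $\omega_\cF$ (already in the cyclotomic case, $\sigma\cdot(1+X)=(1+X)^{\varepsilon_{\mathrm{cyc}}(\sigma)}$, not $[\varepsilon_{\mathrm{cyc}}(\sigma)](1+X)$). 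The injectivity argument as you state it therefore does not go through literally and would need to be replaced, e.g., by the Amice/Mahler transform computation or by the abstract module-theoretic argument. Since you already concede that the surjectivity step is the crux and outsource it to Kisin--Ren and Schneider--Venjakob, and the paper itself outsources the whole lemma to Kobayashi, the honest conclusion is that your sketch is directionally correct but leaves the same hard content unproved, with the additional complications of the $\cO$-versus-$\cW$ rationality and the incorrect action formula.
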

\begin{proof}This is a standard fact. See \cite[Prop.~5.4]{kobayashi-pr}.\end{proof} 

Let $L\subset \Cp$ be a finite extension over $\Qp$, and let $V$ be a finite-dimensional $L$-vector space on which $G_{\Qp}$ acts as a continuous $L$-linear crystalline representation. Let $\bfD(V)=\bfD_{{\rm cris},\Qp}(V)$ be the filtered $\varphi$-module associated with $V$ over $\Qp$ and set
\[\sD_\infty(V):=\bfD(V)\ot_{\Zp}R^{\psi_\cF=0}\iso \bfD(V)\ot_{\Zp}\cO\dBr{G_\infty}.\] 
Let $d:R\to\Omega_R$ be the standard derivation. Fix an invariant differential $\omega_\cF\in\Omega_R$, and let $\log_\cF\in R\wh \ot\Qp$ the logarithm map satisfying $\log_\cF(0)=0$ and $d\log_\cF=\omega_\cF$. Let also $\partial\colon R\to R$ be defined by $df=\partial f\cdot \omega_\cF$. 

Let $\epsilon=(\epsilon_n)\in T_p\cF=\prolim_n\cF^{(n+1)}[p^{n+1}]$  
be a basis of the $p$-adic Tate module of $\cF$, where the inverse limit is with respect to the maps $\varphi^{\Fr^{-(n+1)}}\colon \cF^{(n+1)}[p^{n+1}]\to \cF^{(n)}[p^{n}]$. %To $\epsilon$ and the logarithm $\log_\cF$, one can associate the $p$-adic period $t_\epsilon\in B_{\cris}^+$ in \cite[p.~42]{kobayashi-pr} that satisfies
%\beq\label{E:unif.H}\bfD_{{\rm cris},F}(\varepsilon_\cF)=Ft_\ep^{-1};\quad\varphi t_\ep=\varpi t_\ep\eeq for some uniformizer $\varpi$ in $F$. 
Following \cite[p.~42]{kobayashi-pr}, we associate to $\ep$ and $\omega_\cF$ a $p$-adic period $t_\epsilon\in B_{\cris}^+$ for $\cF$ as follows. For each $n$, there exists a unique isomorphism $\varphi^\flat_n:\cF^{(n)}\to\cF$ such that 
\[
\varphi^{\Fr^{-1}}\circ\cdots\circ\varphi^{\Fr^{-(n-1)}}\circ\varphi^{\Fr^{-n}}=[p^n]\circ\varphi^\flat_n.
\] 
Put $w_n:=\varphi^\flat_n(\ep_{n-1})\in\cF[p^n]$, so that $[p](w_n)=w_{n-1}$ by definition. Let $A_{\rm inf}=A_{\rm inf}(\cO_{\Cp}/\cO_F)$ and $\theta:A_{\rm inf}\to\cO_{\Cp}$ be 
%the $p$-adic period ring and the canonical map 
as defined in \cite[\S 1.2.2]{fontaine223}. It is not difficult to show that there is a unique sequence $(\wtd w_n)$ of elements in $\cF(A_{\rm inf})$ such that $[p](\wtd w_n)=\wtd w_{n-1}$ and $\theta(\wtd w_n)=w_n$, and we set $t_\ep:=\log_\cF(\wtd w_0)\in B_{\cris}^+.$ This $p$-adic period $t_\ep$ satisfies
\beq\label{E:unif.H}\bfD_{{\rm cris},F}(\varepsilon_\cF)=Ft_\ep^{-1}, \quad\varphi t_\ep=\varpi t_\ep,\eeq where $\varpi$ is the uniformizer in $F$ such that $\varphi_\cF^*(\omega_\cF^{\Fr})=\varpi\cdot\omega_\cF$. 

Fix an extension $\breve\varepsilon_\cF\colon \Gal(F_\infty/\Qp)\to L^\x$ of the Lubin--Tate character $\varepsilon_\cF$, and for each $j\in\Z$  let $V\Dmd{j}:=V\ot_{L}\breve\varepsilon_\cF^j$ denote the $j$-th Lubin--Tate twist of $V$. By definition, $\bfD_{{\rm cris},F}(V\Dmd{j})=\bfD(V)\ot_{\Qp} Ft_\ep^{-j}$. Define the derivation ${\rm d}_\ep:\sD(V\Dmd{j})\to \sD(V\Dmd{j-1})$ by 
\[ 
{\rm d}_\ep f:=\eta t_\ep\ot \partial g,\quad\textrm{where $f=\eta\ot g\in \bfD_{\cris,F}(V\Dmd{j})\ot_{\cO}R^{\psi_{\cF}=0}$},
\] 
and the map 
\[
\widetilde{\Delta}\colon\sD_\infty(V)\to \bigoplus_{j\in\Z}\frac{\bfD_{{\rm cris},F}(V\Dmd{-j})}{1-\varphi}
%,\quad f\mapsto (\partial^jf(0)t_\ep^j\pmod{(1-\varphi})).
\]
by $f\mapsto (\partial^jf(0)t_\ep^j\pmod{(1-\varphi}))$.

\begin{rem}
When $\cF=\Gm$, we have $F_\infty=F(\zeta_{p^\infty})$, the corresponding Lubin--Tate character is the $p$-adic cyclotomic character $\varepsilon_{\rm cyc}:G_{\Qp}\to\Zp^\x$, $\varphi_{\Gm}(f)=f^{\Fr}((1+X)^p-1)$, and $\psi_{\Gm}(f)$ is given by the unique power series such that 
\[\varphi_{\Gm}\psi_{\Gm}(f)=p^{-1}\sum_{\zeta^p=1}f(\zeta(1+X)-1).\]
If we take $\omega_{\Gm}$ to be the invariant differential $(1+X)^{-1}dX$, then $\partial=(1+X)\frac{d}{dX}$ and $\log_{\Gm}$ is the usual logarithm $\log(1+X)$. In the following, we fix a sequence $\stt{\zeta_{p^n}}_{n=1,2,3,\ldots}$ of primitive $p^n$-th roots of unity  with $\zeta_{p^{n+1}}^p=\zeta_{p^n}$, and let $t\in \bfD(V)$ be the $p$-adic period corresponding to $(\zeta_{p^{n+1}}-1)\in T_p\Gm$.
\end{rem}

\subsection{Perrin-Riou's big exponential map and the Coleman map}
\label{subsec:Col}

For a finite extension $K$ over $\Qp$, let \[\exp_{K,V}\colon \bfD(V)\ot_{\Qp} K\to \rmH^1(K,V)\] be Bloch--Kato's exponential map \cite[\S 3]{BK}.  In this subsection, we recall the main properties of Perrin-Riou's map $\Omega_{V,h}$ interpolating $\exp_{F,V\Dmd{j}}$ as $j$ runs over non-negative integers $j$. %We begin with some notation. 

Let $V^*:=\Hom_L(V,L(1))$ be the Kummer dual of $V$ and denote by
\[[-,-]_V:\bfD(V^*)\ot K\times \bfD(V)\ot K\to K\ot_{\Qp} L\]
the $K$-linear extension of the de Rham pairing
\[
\pairing_{\dR}\colon\bfD(V^*)\times \bfD(V)\to L.
\] 
Let $\exp^*_{K,V}:\rmH^1(K,V)\to \bfD(V)\ot K$ be the dual exponential map characterized uniquely by 
\[\Tr_{K/\Qp}([x,\exp^*_{K,V}(y)]_V)=\pair{\exp_{K,V^*}(x)}{y}_V,
%\quad x\in\rmH^1(K,V^*),\,y\in \bfD(V^*)\ot K.
\]
for all $x\in\bfD(V^*)\ot K$, $y\in\rmH^1(K,V)$.

Choose a $G_{\Qp}$-stable $\cO_L$-lattice $T\subset V$, and let \[\wh\rmH^1(F_\infty,T)=\prolim_{n} \rmH^1(F_n,T),\quad \wh\rmH^1(F_\infty,V)=\wh\rmH^1(F_\infty,T)\ot_{\Zp}\Qp\] be the Iwasawa cohomology $\Zp\dBr{G_\infty}$-modules associated with $V$. We denote by \[\Tw_j:\wh\rmH^1(F_\infty,V)\iso \wh\rmH^1(F_\infty,V\Dmd{j})\] the twisting map by $\breve \varepsilon_\cF^j$. % If $r$ is a non-negative real number, we say a convergent power series $f\in \sum_{n\geqslant 0,\tau\in\Delta}c_{n,\tau}\cdot\tau\cdot X^n\in \Cp[\Delta]\dBr{X}$ is $O(\log^r)$ (resp. $o(\log^r)$) if 
%\[ \sup_n\abs{c_{n,\tau}}_pn^{-r}<\infty\quad (\text{resp. } \lim_{n\to\infty}\abs{c_{n,\tau}}_pn^{-r}=0) \text{ for all }\tau\in\Delta.\]
%For any subfield $K$ in $\Cp$, let $\sH_{r,K}(X)$ (resp. $\sH_{r^-,K}(X))$ be the subset of elements in $K[\Delta]\dBr{X}$ which are $O(\log^r)$ (resp. $o(\log^r)$).
%For any subfield $L$ in $\Cp$ and real number $r\geq 0$, we put
%\[\sH_{r,L}(X)=\stt{\sum_{n\geqslant 0,\tau\in\Delta}c_{n,\tau}\cdot\tau\cdot X^n\in L[\Delta]\dBr{X}\mid \lim_{n\to\infty}\abs{c_{n,\tau}}_pn^{-r}=0\text{ for all }\tau\in\Delta}\]
 %Let $\gen$ be a topological generator of $\Gamma_\infty$. Define $\sH_{r,K}(G_\infty)$ (resp. $\sH_{r^-,K}(G_\infty)$) to be the $K$-algebra of elements $f(\gen-1)$ for $f\in \sH_{r,K}(X)$ (resp. $\sH_{r^-,K}(X)$). By definition, $\sH_{0,K}(G_\infty)=\cO_K\dBr{G_\infty}\ot\Qp$. Put \[\sH_{\infty,K}(G_\infty)=\cup_{r\geq 0}\sH_{r^-,K}(G_\infty).\] 
For a non-negative real number $r$ and any subfield $K$ in $\Cp$, we put
\[\sH_{r,K}(X)=\stt{\sum_{n\geqslant 0,\tau\in\Delta}c_{n,\tau}\cdot\tau\cdot X^n\in K[\Delta]\dBr{X}\mid \sup_n\abs{c_{n,\tau}}_pn^{-r}<\infty\text{ for all }\tau\in\Delta},\]
where $\abs{\cdot}_p$ is the normalized valuation of $K$ with $\abs{p}_p=p^{-1}$. Let $\gen$ be a topological generator of $\Gamma^\cF_\infty$, and denote by $\sH_{r,K}(G_\infty)$ the ring of elements $\{f(\gen-1)\colon f\in \sH_{r,K}(X)\}$, %\subset K\dBr{G_\infty}$. Thus 
so in particular $\sH_{0,K}(G_\infty)=\cO_K\dBr{G_\infty}\ot\Qp$. Put \[\sH_{\infty,K}(G_\infty)=\bigcup_{r\geqslant 0}\sH_{r,K}(G_\infty).\] 

For $n\geqslant -1$, we define a map \[\Xi_{n,V}\colon \bfD(V)\ot_{\Qp}\sH_{\infty,F}(X)\to  \bfD(V)\ot_{\Qp}  F_n\] by 
\[\Xi_{n,V}(G):=\begin{cases}p^{-(n+1)}\varphi^{-(n+1)}(G^{\Fr^{-(n+1)}}(\epsilon_n))&\text{ if }n\geqslant 0,\\
(1-p^{-1}\varphi^{-1})(G(0))&\text{ if }n=-1.\end{cases}
\]

Let $h$ be a positive integer such that $\bfD(V)=\Fil^{-h}\bfD(V)$ and assume that $\rmH^0(F_\infty,V)=0$. The next result on the construction of the big exponential map and its explicit interpolation formulas is originally due to Perrin-Riou and Colmez for $\cF=\Gm$, and extended by Kobayashi and Shaowei Zhang to general relative Lubin--Tate formal groups of height one. 

\begin{thm}[Perrin-Riou, Colmez, Kobayashi, Shaowei Zhang]\label{T:BigExp} Let $\wtd\Lam:=\Zp\dBr{G_\infty}$. There exists a big exponential map 
	\[\Exp^\epsilon_{V,h}: \sD_\infty(V)^{\widetilde{\Delta}=0}\to \wh\rmH^1(F_\infty,T)\ot_{\wtd\Lam} \sH_{\infty, \Qp}(G_\infty)\]
which is $\wtd\Lam$-linear and characterized by the following interpolation property. Let $g\in \sD_\infty(V)^{\widetilde{\Delta}=0}$.  
If $j\geqslant 1-h$, then 
\[	\Proj_{F_n}(\Tw_j\circ\Exp^\epsilon_{V,h}(g))=(-1)^{h+j-1}(h+j-1)!\cdot  \exp_{F_n,V\Dmd{j}}(\Xi_{n,V\Dmd{j}}(\rmd_\ep^{-j}G))\in \rmH^1(F_n,V\Dmd{j}),
\]
and if $j\leqslant -h$, then 
	\[\exp^*_{F_n,V\Dmd{j}}(\Proj_{F_n}(\Tw_j\circ\Exp^\epsilon_{V,h}(g)))=\frac{1}{(-h-j)!}\cdot \Xi_{n,V\Dmd{j}}(\rmd_\ep^{-j}G))\in \bfD(V\Dmd{j})\ot_{\Qp} F_n,\]
where $G\in \bfD(V)\ot_{\Qp} \sH_{h,F}(X)$ is a solution of the equation
	\[(1-\varphi\ot\varphi_\cF)G=g.\]
%It also satisfies 
%	\[{\rm Tw}^j\circ\Exp^\epsilon_{V,h}\circ {\rm d}_\ep^{j}=\Exp^\epsilon_{V\Dmd{j},h+j}.\]
Moreover, if $D_{[s]}$ is a $\varphi$-invariant $\Qp$-subspace of $\bfD(V)$ such that all eigenvalues of $\varphi$ on $D_{[s]}$ have $p$-adic valuation $s$, then $\Omega_{V,h}^\ep$ maps $(D_{[s]}\ot R^{\psi=0})^{\wtd\Delta=0}$ into $\wh\rmH^1(F_\infty,T)\ot_{\wtd\Lam}\sH_{s+h,\Qp}(G_\infty)$.
\end{thm}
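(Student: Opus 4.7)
The plan is to follow the original construction of Perrin-Riou in \cite{PR115} for $\cF=\Gm$, adapted to the Lubin--Tate setting along the lines of \cite{kobayashi-pr}. The argument proceeds in three stages: construction of $\Omega^\epsilon_{V,h}$ via solving a Frobenius equation, verification of the interpolation formulas at each finite layer, and the finer growth estimate in $\sH_{s+h,\Qp}(G_\infty)$.

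For the construction, given $g\in\sD_\infty(V)^{\widetilde\Delta=0}$, I would solve the equation $(1-\varphi\ot\varphi_\cF)G=g$ in $\bfD(V)\ot_{\Qp}\sH_{h,F}(X)$. The obstruction to inverting $1-\varphi\ot\varphi_\cF$ in this ring of distributions is concentrated in the eigenspaces $\bfD_{\cris,F}(V\Dmd{-j})/(1-\varphi)$ for $j\in\Z$, and the hypothesis $\widetilde\Delta(g)=0$ precisely annihilates these obstructions. Uniqueness up to the kernel $\bfD(V)^{\varphi=1}\ot\Qp\dBr{G_\infty}$ is ensured by $\rmH^0(F_\infty,V)=0$. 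The map $\Omega^\epsilon_{V,h}$ is defined by combining the sequence of evaluations $\Xi_{n,V}(G)$ with the Lubin--Tate Coleman power series map, yielding a norm-compatible system in $\prolim_n\rmH^1(F_n,T)$, whose corestriction compatibility is a formal consequence of the functional equation satisfied by $G$. The $\wtd\Lam$-linearity is functorial in $g$.

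For the interpolation formulas, one checks layer-by-layer the compatibility between $\Xi_{n,V\Dmd{j}}\circ \rmd_\ep^{-j}$ and the Bloch--Kato exponential (respectively, dual exponential) on $\rmH^1(F_n,V\Dmd{j})$. When $j\geqslant 1-h$, the differential operator $\rmd_\ep$ interacts with the Hodge filtration through $t_\ep$ and $\omega_\cF$, producing the factor $(-1)^{h+j-1}(h+j-1)!$ after combining $h+j-1$ applications of $\rmd_\ep$ with the Hodge-filtered projection used by Bloch--Kato. When $j\leqslant -h$, one instead invokes local Tate duality and the characterization of $\exp^*_{F_n,V\Dmd{j}}$ as projection onto the Hodge-filtered quotient, yielding the factor $1/(-h-j)!$. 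In both regimes the key input is the commutation of $\rmd_\ep$ with the projection $\Proj_{F_n}$ up to the expected factorial constant, which is the Lubin--Tate analog of Colmez's differential calculus on $R^{\psi=0}$.

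The main obstacle is the refinement on slopes in the last paragraph: to show that $\Omega^\epsilon_{V,h}$ sends $(D_{[s]}\ot R^{\psi=0})^{\widetilde\Delta=0}$ into $\wh\rmH^1(F_\infty,T)\ot_{\wtd\Lam}\sH_{s+h,\Qp}(G_\infty)$, one must control the growth of the formal Neumann series for $(1-\varphi\ot\varphi_\cF)^{-1}$ when $\varphi$ has slope $s$ on $D_{[s]}$. Combined with the relation $\varphi t_\ep=\varpi t_\ep$ and the normalized valuation of $\varpi$, this yields an additional distribution growth of exact order $s$, pushing the natural target from $\sH_{h,\Qp}(G_\infty)$ up to $\sH_{s+h,\Qp}(G_\infty)$. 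Establishing this estimate uniformly in the setting of relative Lubin--Tate formal groups of height one is the content of the extensions of Perrin-Riou's cyclotomic slope analysis by Kobayashi and Shaowei Zhang, and would be invoked to conclude the proof.
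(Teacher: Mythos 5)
Your proposal reconstructs the standard Perrin-Riou scheme — solve $(1-\varphi\otimes\varphi_\cF)G=g$ in the distribution module, evaluate via $\Xi_{n,V\Dmd{j}}\circ\rmd_\ep^{-j}$ and compare layer-by-layer with the Bloch--Kato (dual) exponential, then bound the growth of $G$ by the Frobenius slope on $D_{[s]}$ — and ultimately defers to Kobayashi and Shaowei Zhang for the uniform Lubin--Tate estimates. This is precisely what the paper does: its proof is a direct citation to Perrin-Riou and Colmez for the cyclotomic case and to \cite[App.]{kobayashi-pr} and \cite[Thm.~6.2]{zhang04} for the relative Lubin--Tate adaptation, so you are following essentially the same route, just with the intermediate scaffolding written out.
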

\begin{proof}
In the case $\cF=\Gm$, the construction of %the big exponential map 
$\Omega_{V,h}^\ep$ and its interpolation property at integers $j\geqslant 1-h$ is due to Perrin-Riou \cite[\S 3.2.3 Th\'eor\`eme, \S 3.2.4(i)]{PR115}, while the interpolation formula at integers $j\leqslant -h$ is a consequence of the ``explicit reciprocity formula'' proved by Colmez %\cite[TH{\'E}OR{\`E}ME IX.4.5]{colmez148}
\cite[Thm.~IX.4.5]{colmez148}. 
%(\emph{cf.} \cite[Thm.~II.10]{berger03}). 
Their methods can be adapted to general relative Lubin--Tate formal groups of height one after some necessary modifications; the details can be found in \cite[App.]{kobayashi-pr} for the construction of $\Omega_{V,h}^\ep$ and the interpolation at $j\geqslant 1-h$, and in \cite[Thm.~6.2]{zhang04} for the explicit reciprocity formula. \end{proof}
%For $g=\lam_1\cdot (1+X)\ot d\in \sD_\infty(V)$ and $g'=\lam_2\cdot (1+X)\ot d'\in\sD_\infty(V^*(1))$, put
%\[\pair{g}{g'}_{\sD_\infty(V)}=\lam_1\lam_2\pair{d}{d'}_{\dR}\in F\ot_\cO\Lam.\]

%\begin{thm}[Reciprocity law]\label{T:Rec.P}
%\[\pair{\Exp^\epsilon_{V,h}(g_1)}{\Exp^\epsilon_{V^*,1-h}(g_2)}_{V,F_\infty}=(-1)^{h-1}\cdot \Tr_{F/\Qp}(\pair{g_1}{g_2}_{\sD_\infty(V)})\]
%\end{thm}
To introduce the Coleman map, we further assume the following hypothesis:
\beq\label{E:Delta0.H}\sD_\infty(V)^{\wtd\Delta=0}=\sD_\infty(V).\eeq
 For simplicity, we shall write $\sH_{K}$ for $\sH_{\infty,K}(G_\infty)$ in the sequel. We let \[\bpair{-}{-}_V\colon \bfD(V^*)\ot_{\Qp} \sH_{F}\times \bfD(V)\ot_{\Qp} \sH_{F}\to L\ot_{\Qp}\sH_{F}\] be the pairing defined by 
\[ 
\bpair{\eta_1\ot \lam_1}{\eta_2\ot \lam_2}_V=\pair{\eta_1}{\eta_2} _{\dR}\ot \lam_1\lam_2^\iota
\] 
for all $\lam_1,\lam_2\in\sH_{F}$. For any $e\in R^{\psi_\cF=0}$ and $\ep$ a generator of $T_p\cF$,  there is unique $\cO_L\dBr{G_\infty}$-linear \emph{Coleman map} $\COL_{e}^\ep\colon \wh\rmH^1(F_\infty,V^*)\to \bfD(V^*)\ot_{\Qp} \sH_{F}$ characterized by 
\beq\label{E:Col2.H}\Tr_{F/\Qp}(\bpair{\COL_{e}^\ep(\bfz)}{\eta}_V)=\pair{\bfz}{\Exp^\ep_{V,h}(\eta\ot e)}_{F_\infty}\in L\ot_{\Qp}\sH_{\Qp}\eeq
for all $\eta\in\bfD(V)$. 

Let $\Qpur$ be the completion of $\Qp^\ur$ in $\Cp$, let $\cW$ be the ring of integers of $\Qpur$, and set  $F_n^\ur=F_n\Qp^\ur$. Let $\sg_0\in \Gal(F_\infty^\ur/\Qp)$ be such that $\sg_0|_{\Qp^\ur}=\Fr$ is the absolute Frobenius. Fix an isomorphism $\rho:\wh\bfG_m \iso \cF$ defined over $\cW$ and let $\rho:\cW\dBr{T}\iso R\ot_\cO \cW$ be the map defined by $\rho(f)=f\circ\rho^{-1}$. Then we have 
\[\varphi_\cF\circ\rho=\rho^{\Fr}\circ\varphi_{\Gm}.\]

Let $\bfe\in R^{\psi_\cF=0}$ be a generator over $\cO\dBr{G_\infty}$ and write $\rho(1+X)=h_{\bfe}\cdot \bfe$ for some $h_{\bfe}\in \cW\dBr{G_\infty}$. This implies that $\bfe(0)\in\cO^\x$. Now we fix $\ep=(\ep_n)_{n=0,1,2,\ldots}$ to be the generator of $T_p\cF$ given by \[\ep_n=\rho^{\Fr^{-(n+1)}}(\zeta_{p^{n+1}}-1)\in\cF^{(n+1)}[p^{n+1}].\] 
Let $\eta\in\bfD(V)$ be such that $\varphi\eta=\alpha\eta$ and of slope $s$ (i.e. $\abs{\alpha}_p=p^{-s}$). For every $\bfz\in\wh\rmH^1(F_\infty,V^*)$, we define
\[\COL^{\eta}(\bfz):=\sum_{j=1}^{[F:\Qp]}\bpair{\COL_{\bfe}^\ep(\bfz^{\sigma_0^{-j}})}{\eta}\cdot  h_{\bfe}\cdot \sigma_0^j\in \sH_{s+h,L\Qpur}(\wtd G_\infty),\]
where $\wtd G_\infty:=\Gal(F_\infty/\Qp)$, and $\bpair{-}{-}\colon \bfD(V^*)\ot \sH_{\Qpur} \times \bfD(V)\to \sH_{L\Qpur}$ is the image of $\bpair{-}{-}_V$ under the natural map $L\ot_{\Qp} \sH_{\Qpur}\to \sH_{L\Qpur}$. 

For any integer $j$, put \[\bfz_{-j,n}:=\Proj_{F_n}(\Tw_{-j}(\bfz))\in\rmH^1(F_n,V^*\Dmd{-j}).\] %Let $\hat K_\infty/\Qp$ be the maximal $\Zp$-extension in $F_\infty$. Suppose that $p$ is totally ramified in $\hat K_\infty$. %For $\bfz\in\wh\rmH^1(\base_\infty,V)=\prolim_n\rmH^1(\base_n,V)$, define
%\[\COL_{\bfe}^{\ep,\eta}(\bfz):=\pair{\loc_\frakp(\bfz)}{\bfw^\eta}_{F_\infty,p}\in \sH_{1,\Qp}(G_\infty).\]
%For each continuous character $\chi:G_\infty\to\Qbar_p$, let $\COL^\eta(\bfz)(\chi)$ denote the value $\chi(\COL^\eta(\bfz))$. 
We say that a finite order character $\chi$ of $\wtd G_\infty$ has conductor $p^{n+1}$ if $n$ is the smallest integer $\geqslant -1$ such that  $\chi$ factors through $\Gal(F_n/\Qp)$.
\begin{thm} \label{thm:kobayashi}Suppose that $\Fil^{-1}\bfD(V)=\bfD(V)$ and let $h=1$. Let $\psi$ be a $p$-adic character of $\wtd G_\infty$ such that $\psi=\chi\breve\varepsilon_\cF^j$ with $\chi$ a finite order character of conductor $p^{n+1}$. If $j<0$, then 
	\begin{align*}
	&\COL^{\eta}(\bfz)(\psi)=\frac{(-1)^{j-1}}{(-j-1)!}\\
	&\times\begin{cases}
	\bpair{\log_{F,V^*\Dmd{-j}}\bfz_{-j,n}\ot t^{-j}}{(1-p^{j-1}\varphi^{-1})(1-p^{-j}\varphi)^{-1}\eta}&\text{ if }n=-1,\\[1 em]
	p^{(n+1){(j-1)}}\boldsymbol\tau(\psi)\sum\limits_{\tau\in\Gal(F_n/\Qp)}\chi^{-1}(\tau)\bpair{\log_{F_n,V^*\Dmd{-j}}\bfz^{\tau}_{-j,n}\ot t^{-j}}{\varphi^{-(n+1)}\eta}&\text{ if }n\geqslant 0.
	\end{cases}
	\end{align*}
	If $j\geqslant 0$, then 
	\begin{align*}
	&\COL^{\eta}(\bfz)(\psi)=j!(-1)^j\\
	&\times\begin{cases}
	\bpair{\exp^*_{F,V^*\Dmd{-j}}\bfz_{-j,n}\ot t^{-j}}{(1-p^{j-1}\varphi^{-1})(1-p^{-j}\varphi)^{-1}\eta}&\text{ if }n=-1,\\[1 em]
p^{(n+1){(j-1)}}\boldsymbol\tau(\psi)\sum\limits_{\tau\in\Gal(F_n/\Qp)}\chi^{-1}(\tau)\bpair{\exp^*_{F_n,V^*\Dmd{-j}}\bfz^{\tau}_{-j,n}\ot t^{-j}}{\varphi^{-(n+1)}\eta}&\text{ if }n\geqslant 0.
	\end{cases}
	\end{align*}
	Here %$\varpi\in F^\x$ is the uniformizer in \eqref{E:unif.H}, $\varpi^{[n+1]}:=\prod_{i=1}^{n+1}\varpi^{\sigma^i}$ and 
	$\boldsymbol\tau(\psi)$ is the Gauss sum defined by 
	\[\boldsymbol\tau(\psi):=\sum_{\tau\in\Gal(F^{\ur}_n/F^{\ur})}\psi\varepsilon_{\rm cyc}^{-j}(\tau\sigma_0^{n+1})\zeta_{p^{n+1}}^{\tau\sigma_0^{n+1}}.\]
\end{thm}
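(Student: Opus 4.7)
The plan is to unwind the definition of $\COL^\eta$ and substitute the explicit interpolation formulas of \thmref{T:BigExp} in the appropriate Lubin--Tate twist, then convert the resulting $\exp$-pairings into $\exp^*$-pairings via Bloch--Kato duality. More precisely, by the defining equation~\eqref{E:Col2.H}, the value $\COL^\eta(\bfz)(\psi)$ is controlled by the global pairing $\pair{\bfz}{\Omega^\ep_{V,h}(\eta\ot\bfe)}_{F_\infty}$; by the distribution formula analogous to~\eqref{E:pairing1.H}, evaluating this at a finite-order character of $\wtd G_\infty$ twisted by $\breve\varepsilon_\cF^j$ reduces to a finite sum over $\tau\in\Gal(F_n/\Qp)$ of local pairings at level $F_n$ between $\bfz^\tau_{-j,n}$ and $\Proj_{F_n}(\Tw_j\,\Omega^\ep_{V,h}(\eta\ot\bfe))$.

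Next I would invoke \thmref{T:BigExp} with $h=1$. For $j\geqslant 0=1-h$, the projection equals $(-1)^{j-1}(j-1)!\exp_{F_n,V\Dmd{j}}(\Xi_{n,V\Dmd j}(\rmd_\ep^{-j}G))$ where $G$ solves $(1-\varphi\ot\varphi_\cF)G=\eta\ot\bfe$, and for $j\leqslant -1=-h$ we get an analogous expression with $\exp^*$ in place of $\exp$ and the factorial $1/(-j-1)!$. Applying the defining property of $\exp^*_{F_n,V^*\Dmd{-j}}$, the pairing $\pair{\bfz^\tau_{-j,n}}{\exp_{F_n,V\Dmd j}(\cdot)}_{F_n}$ transforms into $\Tr_{F_n/\Qp}\bigl[\exp^*_{F_n,V^*\Dmd{-j}}\bfz^\tau_{-j,n}\ot t^{-j},\,\cdot\,\bigr]$, which explains the occurrence of $\exp^*$ in the case $j\geqslant 0$ of the statement; the case $j<0$ is dual, with $\exp^{-1}=\log$ appearing instead.

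The third step is the explicit computation of $\Xi_{n,V\Dmd j}(\rmd_\ep^{-j}G)$. Writing $G=(1-\varphi\ot\varphi_\cF)^{-1}(\eta\ot \bfe)$ and using $\partial^{-j}\bfe(0)$ together with $\ep_n=\rho^{\Fr^{-(n+1)}}(\zeta_{p^{n+1}}-1)$ produces the Frobenius factor $\varphi^{-(n+1)}\eta$ for ramified $\chi$ (case $n\geqslant 0$); for the unramified case $n=-1$ one must expand the geometric series $(1-\varphi\ot\varphi_\cF)^{-1}$ at the origin and combine it with the normalization $(1-p^{-1}\varphi^{-1})$ built into $\Xi_{-1,V}$, which collapses to the Euler factor $(1-p^{j-1}\varphi^{-1})(1-p^{-j}\varphi)^{-1}$. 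The twist by $\sigma_0^j$ in the definition of $\COL^\eta$ together with the normalization $\rho(1+X)=h_\bfe\cdot\bfe$ matches the $F$-linearity and absorbs the Frobenius action on the $\Qp^{\mathrm{ur}}$-coefficients.

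The final step is to collect the Fourier-type sum arising from $\sum_\tau\chi^{-1}(\tau)$, where the point $\zeta_{p^{n+1}}^{\tau\sigma_0^{n+1}}$ entering via $\ep_n$ together with the shift by $\breve\varepsilon_\cF^j$ assembles into exactly the Gauss sum $\boldsymbol\tau(\psi)$ defined at the end of the statement. Finally, the global power of $p$ of the form $p^{(n+1)(j-1)}$ arises from combining $p^{-(n+1)}$ in $\Xi_{n,V\Dmd j}$ with the normalization of $\rmd_\ep^{-j}$ and the factor $\varpi^{-(n+1)j}=p^{-(n+1)j}$ coming from $\varphi t_\ep=\varpi t_\ep$ in \eqref{E:unif.H}. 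The main obstacle is bookkeeping: keeping track simultaneously of (i) the Lubin--Tate twist $j$ versus the cyclotomic twist appearing through $t$ and $t_\ep$, (ii) the power of $\varphi$ and $p$ introduced by $\Xi_{n,V\Dmd j}$ and by the operator $(1-\varphi\ot\varphi_\cF)^{-1}$, and (iii) the two distinct cases ($n=-1$ vs $n\geqslant 0$), which produce either Euler factors or Gauss sums. None of these steps is deep once \thmref{T:BigExp} is in hand; the argument is essentially the Lubin--Tate counterpart of Perrin-Riou's classical computation carried out \emph{mutatis mutandis} as in \cite{kobayashi-pr} and \cite{zhang04}.
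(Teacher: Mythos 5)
Your plan is the same as the paper's: the paper simply cites the interpolation formulas of \thmref{T:BigExp} together with the computation in \cite[Thm.~5.10]{kobayashi-pr} (and \cite[Thm.~4.15]{LZ2}), and you are reconstructing that computation by unwinding~\eqref{E:Col2.H}, invoking the big-exponential interpolation property, and dualizing via the defining relation between $\exp$ and $\exp^*$. The overall skeleton — distribution relation, interpolation formula in the relevant Lubin--Tate twist, duality to convert to $\exp^*_{V^*\Dmd{-j}}$ (for $j\geqslant 0$) or $\log_{V^*\Dmd{-j}}$ (for $j<0$), explicit evaluation of $\Xi_{n,V\Dmd j}(\rmd_\ep^{-j}G)$ giving the Euler factor at $n=-1$ and the Gauss sum and $\varphi^{-(n+1)}$ at $n\geqslant 0$ — is sound and matches the cited computation.

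Two small slips in the bookkeeping: with $h=1$ and $j\geqslant 1-h$, the prefactor in \thmref{T:BigExp} is $(-1)^{h+j-1}(h+j-1)!=(-1)^j j!$, not $(-1)^{j-1}(j-1)!$ as you wrote; the correct value $j!(-1)^j$ is what actually appears in the statement. Similarly, the identification $\varpi^{-(n+1)j}=p^{-(n+1)j}$ is only valid when $[F:\Qp]=1$ (here $\varpi$ is the uniformizer with $\varphi_\cF^*(\omega_\cF^{\Fr})=\varpi\,\omega_\cF$, which has norm $p$ rather than being $p$ itself), so the extraction of $p^{(n+1)(j-1)}$ must be done more carefully, tracking how $\varphi^{-(n+1)}$ acts on $t_\ep^{-j}$ and on the $\Qp^{\mathrm{ur}}$-coefficients through $\sigma_0$ — exactly what the sum over $\sigma_0^j$ in the definition of $\COL^\eta$ and the factor $h_\bfe$ are designed to reconcile. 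These are mechanical corrections and do not affect the validity of the approach.
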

\begin{proof}
	This follows from the explicit reciprocity formula in \thmref{T:BigExp} and the computation and  in \cite[Thm.~5.10]{kobayashi-pr} (\emph{cf.} \cite[Thm.~4.15]{LZ2}).
\end{proof}

\subsection{The derived $p$-adic heights and the Coleman map}
\label{subsec:applic}

Let $E$ be an elliptic curve over $\Q$ with good ordinary reduction at $p$, and let $V=T_pE\ot_{\Zp} L$ with $L$ a finite extension of $\bQ_p$. We have $\Fil^{-1}\bfD(V)=\bfD(V)$ and $V^*=V$. Let $\omega_E$ be the N\'eron differential of $E$, regarded as an element in $\bfD(\rmH_{\et}^1(E_{/\Qbar},\Qp))$. %by the de Rham comparison. 
We fix an embedding $\iota_p\colon \Qbar\hookto \Cp$, and for any subfield $H\subset\Qbar$, let $\hat H$ denote the completion of $\iota_p(H)$ in $\Cp$. 

Let $K$ be an imaginary quadratic field in which $p=\frakp\ol{\frakp}$ splits, with $\frakp$ the prime of $K$ above $p$ induced by $\iota_p$.  
Let $K_\infty$ be the anticyclotomic $\Zp$-extension of $K$, and set $\Gamma_\infty=\Gal(K_\infty/K)$ and $\hat\Gamma_\infty=\Gal(\hat K_\infty/\Qp)$. For any integer $c>0$ let $H_c$ be the ring class field of $K$ of conductor $c$, and choosing $c$ to be prime to $p$, put $F=\hat H_c$. Let $\xi\in K$ be a generator of  $\frakp^{[F:\Qp]}$ and let $F_\infty$ be the Lubin--Tate $\Zp$-extension over $F$ associated with $\xi/\ol{\xi}$. By \cite[Prop.~3.7]{kobayashi-pr} we have $F_\infty=\bigcup_{n=0}^\infty\hat H_{cp^n}$, and hence $F_\infty$ is a finite extension of $\hat K_{\infty}$. Moreover, hypothesis \eqref{E:Delta0.H} holds since $\bfD(V)^{\varphi^{[F:\Qp]}=(\xi/\ol{\xi})^j}=\stt{0}$ for any $j\in\mathbf{Z}$, given that the $\varphi$-eigenvalues of $\bfD(V)$ are $p$-Weil numbers while $\xi/\ol{\xi}$ is a $1$-Weil number. 
%By \lmref{lem:rk1}, $R^{\psi_\cF=0}$ is a free $\cO_F\dBr{G_\infty}$-module of rank one. 

Let $\alpha_p\in\Zp^\x$ be the $p$-adic unit eigenvalue of the Frobenius map $\varphi$ acting on $\bfD(V)$, and let $\eta\in \bfD(V)=\bfD(\rmH^1_{\et}(E_{/\Qbar},\Qp))\ot\bfD(L(1))$ be a $\varphi$-eigenvector of slope $-1$ such that 
\[\varphi \eta=p^{-1}\alpha_p\cdot\eta\quad\textrm{and}\quad \pair{\eta}{\omega_E\ot t^{-1}}_{\rm dR}=1. 
\]%Let $\alpha_p\in\Zp^\x$ be the $p$-adic unit eigenvalue of the Frobenius $\varphi$ on $\bfD(V)$ and let $\beta_p=p\alpha_p^{-1}$. We have $\varphi\eta=\alpha_p^{-1}\eta$. 
Let $\bfe\in R^{\psi_\cF=0}$ be a generator over $\cO_F\dBr{G_\infty}$ such that $\bfe(0)=1$. Applying the big exponential map $\Omega_{V,1}^\ep$ in \thmref{T:BigExp}, we define
\beq\label{E:x.H}\bfw^\eta=\Exp^\ep_{V,1}(\eta\ot \bfe)\in \wh\rmH^1(F_\infty,V).\eeq
The following lemma is a standard fact.
\begin{lm}\label{L:33.H}We have
	\[\Proj_F(\bfw^\eta)=\exp_{F,V}\left(\frac{1-p^{-1}\varphi^{-1}}{1-\varphi}\eta\right)\in \rmH^1(F,V).\]
\end{lm}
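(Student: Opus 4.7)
The plan is to deduce the lemma by specializing the interpolation formula for $\Exp^\ep_{V,1}$ from \thmref{T:BigExp} to the parameters $h=1$, $j=0$, and $n=-1$ (so that $F_{-1}=F$). With these choices the constant factor $(-1)^{h+j-1}(h+j-1)! = (-1)^0\cdot 0!$ is simply $1$, and since $\rmd_\ep^0 = \mathrm{id}$, the interpolation formula reduces to
\[
\Proj_F(\bfw^\eta)=\exp_{F,V}\bigl(\Xi_{-1,V}(G)\bigr),
\]
where $G \in \bfD(V)\ot_{\Qp}\sH_{1,F}(X)$ is any solution of $(1-\varphi\ot\varphi_\cF)G=\eta\ot\bfe$. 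Unfolding the definition, $\Xi_{-1,V}(G)=(1-p^{-1}\varphi^{-1})(G(0))$, so the whole task reduces to computing $G(0)\in\bfD(V)\ot_{\Qp}F$.

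Next I would evaluate the defining equation for $G$ at $X=0$. Because $\varphi_\cF\colon\cF\to\cF^{(-1)}$ is a morphism of formal groups, the associated power series satisfies $\varphi_\cF(X)\in X\cdot R$, and consequently for every $H\in R$ one has
\[
\varphi_\cF(H)(0)=H^{\Fr}\bigl(\varphi_\cF(0)\bigr)=H^{\Fr}(0)=\Fr(H(0)).
\]
Setting $X=0$ in $(1-\varphi\ot\varphi_\cF)G=\eta\ot\bfe$ and using the normalization $\bfe(0)=1$ therefore yields the scalar identity
\[
(1-\varphi)\bigl(G(0)\bigr)=\eta
\]
in $\bfD(V)\ot_{\Qp}F$, where $\varphi$ is the natural $\Fr$-semilinear extension.

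Since $\varphi\eta=p^{-1}\alpha_p\eta$ with $\alpha_p\in\Zp^\x$, the eigenvalue $p^{-1}\alpha_p$ has strictly negative $p$-adic valuation, so $1-\varphi$ is invertible on the line $F\cdot\eta$ (indeed on the whole slope $-1$ subspace of $\bfD(V)\ot F$). Solving then gives $G(0)=(1-\varphi)^{-1}\eta$, and substituting back produces
\[
\Proj_F(\bfw^\eta)=\exp_{F,V}\!\left((1-p^{-1}\varphi^{-1})(1-\varphi)^{-1}\eta\right)=\exp_{F,V}\!\left(\frac{1-p^{-1}\varphi^{-1}}{1-\varphi}\eta\right),
\]
which is the desired formula.

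I do not anticipate a serious obstacle; this is a direct unwinding of the interpolation property, and the entire content is the careful bookkeeping of the evaluation-at-zero step. The one subtle point worth highlighting is the behavior of $\varphi_\cF$ at the origin, which is what legitimizes passing from the identity $(1-\varphi\ot\varphi_\cF)G=\eta\ot\bfe$ in $\bfD(V)\ot R$ to the scalar equation $(1-\varphi)(G(0))=\eta$ in $\bfD(V)\ot F$; everything else is formal substitution into \thmref{T:BigExp}.
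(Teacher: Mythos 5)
Your conclusion and the algebraic computation $G(0)=(1-\varphi)^{-1}\eta$ are both correct, and the observation that $\varphi_\cF(H)(0)=H^{\Fr}(0)=\Fr(H(0))$ is the right way to read off $G(0)$ from the defining equation. However, your route through Theorem~\ref{T:BigExp} at $n=-1$ is potentially circular. The interpolation formula for Perrin-Riou's big exponential, as established in the cited sources (Kobayashi's appendix, following Perrin-Riou and Colmez), is proved at the levels $F_n$ with $n\geqslant 0$; the base-field case $n=-1$, with the extra Euler factor $(1-p^{-1}\varphi^{-1})$ built into the definition of $\Xi_{-1,V}$, is a \emph{consequence} obtained by corestriction from $F_0$ to $F$. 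Lemma~\ref{L:33.H} is precisely this derivation, specialized to $g=\eta\ot\bfe$, $h=1$, $j=0$. Reading Theorem~\ref{T:BigExp} as already covering $n=-1$ therefore essentially assumes the content of the lemma.

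The paper's argument instead applies the interpolation at $n=0$, giving $\Proj_{F_0}(\bfw^\eta)=\exp_{F_0,V}(\Xi_{0,V}(G))$ with $\Xi_{0,V}(G)=p^{-1}\varphi^{-1}\bigl(G^{\Fr^{-1}}(\ep_0)\bigr)$, and then corestricts to $F$. The heart of that argument is the trace computation
\[
\Tr_{F_0/F}\bigl(G^{\Fr^{-1}}(\ep_0)\bigr)=\frac{p\varphi-1}{1-\varphi}\eta,
\]
which uses the relation $\sum_{\zeta\in\cF^{\Fr^{-1}}[p]}\bfe^{\Fr^{-1}}(X\oplus_\cF\zeta)=0$ coming from $\psi_\cF\bfe=0$; multiplying by $p^{-1}\varphi^{-1}$ then produces the Euler factor $(1-p^{-1}\varphi^{-1})$. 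Your approach is shorter and the formula $G(0)=(1-\varphi)^{-1}\eta$ is a useful reformulation of what the paper's trace identity encodes, but to make the argument self-contained you should either cite a version of the big-exponential interpolation that explicitly includes the level $F_{-1}$, or supply the corestriction step from $n=0$ to $n=-1$ as the paper does.
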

\begin{proof}Let $g=\eta\ot\bfe$ and let $G(X)\in\bfD(V)\ot \sH_{1,\Qpur}(X)$ such that $(1-\varphi\ot\varphi_\cF)G=g$. Then we have
\[G(\ep_0)=\eta\ot \bfe(\ep_0)-\eta+(1-\varphi)^{-1}\eta.\]
The equation $\psi_\cF\bfe(X)=0$ implies
\[\sum_{\zeta\in \cF^{\Fr^{-1}}[p]}\bfe^{\Fr^{-1}}(X\oplus_\cF \zeta)=0.\]
It follows that
%\[\sum_{\gamma\in \Gal(F_0/F)}\bfe^{\sg^{-1}}(\ep_0^\gamma)+\bfe^{\sigma^{-1}}(0)=0,\]
%and hence
\[\Tr_{F_0/F}(G^{\Fr^{-1}}(\ep_0))=\sum_{\tau\in\Gal(F_0/F)}\eta\ot \bfe(\ep_0^\tau)-\eta+(1-\varphi)^{-1}\eta=\frac{p\varphi-1}{1-\varphi}\eta,\]
and hence
	\begin{align*}
	\Proj_F(\bfw^\eta)&={\rm cor}_{F_0/F}(\Xi_{0,V}(G))=\exp_{F,V}\Tr_{F_0/F}\left(p^{-1}\varphi^{-1}(G^{\Fr^{-1}}(\ep_0))\right)\\
	&=\exp_{F,V}\left((1-p^{-1}\varphi^{-1})(1-\varphi)^{-1} \eta\right).
	\end{align*}
	This completes the proof.
\end{proof}

\begin{lm}\label{L:36.H}Let $\Qp^{\rm cyc}$ be the cyclotomic $\Zp^\x$-extension of $\Q_p$. Let $\sigma_{\rm cyc}\in\Gal(F^\ur_\infty/\Qp)$ be the Frobenius such that $\sigma_{\rm cyc}|_{\Qp^{\rm cyc}}=1$ and $\sigma_{\rm cyc}|_{\Qp^\ur}=\Fr$. For each $\bfz\in \wh\rmH^1(\hat K_\infty,V)$, we have  
	\[\pair{\bfz}{{\rm cor}_{F_\infty/\hat K_\infty}(\bfw^\eta)}_{\hat K_\infty}=\Proj_{\hat K_\infty}(\COL^\eta(\bfz)) \sum_{i=1}^{[F:\Qp]} \frac{\sg_{\rm cyc}^i|_{\hat K_\infty}}{[F_\infty:\hat K_\infty]\cdot h_{\bfe}^{\Fr^i  }}\in \cW\dBr{\hat\Gamma_\infty)}\ot\Qp.\]
\end{lm}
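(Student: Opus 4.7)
My plan is to prove the identity by unfolding both sides via the defining properties of the big exponential map $\Exp^\ep_{V,1}$, the Coleman map $\COL_\bfe^\ep$, and the variant $\COL^\eta$, reducing the statement to formal manipulations in Iwasawa algebras.

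The first step is to apply the projection formula for cup-product pairings in Iwasawa cohomology to rewrite
\[
\pair{\bfz}{{\rm cor}_{F_\infty/\hat K_\infty}(\bfw^\eta)}_{\hat K_\infty}={\rm pr}_*\bigl(\pair{{\rm res}_{F_\infty/\hat K_\infty}(\bfz)}{\bfw^\eta}_{F_\infty}\bigr),
\]
where ${\rm pr}_*$ denotes the pushforward on Iwasawa algebras induced by the natural projection $\wtd G_\infty\twoheadrightarrow\hat\Gamma_\infty$. Since each fiber of this projection has size $[F_\infty:\hat K_\infty]$, averaging over fibers is exactly what produces the $[F_\infty:\hat K_\infty]^{-1}$ factor in the target formula.

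The second step is to apply the defining identity \eqref{E:Col2.H} of the Coleman map to the class $\bfy:={\rm res}_{F_\infty/\hat K_\infty}(\bfz)$: using $\bfw^\eta=\Exp^\ep_{V,1}(\eta\ot\bfe)$, this yields
\[
\pair{\bfy}{\bfw^\eta}_{F_\infty}=\Tr_{F/\Qp}\bigl(\bpair{\COL_\bfe^\ep(\bfy)}{\eta}_V\bigr).
\]
Unpacking the trace along the embeddings $F\hookto\Qpur$ provided by the Frobenius powers $\sigma_0^j$, and using Galois equivariance of $\COL_\bfe^\ep$ to rewrite $\COL_\bfe^\ep(\bfy)^{\sigma_0^j}$ as $\COL_\bfe^\ep(\bfy^{\sigma_0^{-j}})$, one recognizes the sum appearing in the definition of $\COL^\eta(\bfz)$ up to the multiplicative correction $h_\bfe\cdot\sigma_0^j$ attached to each summand.

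The third step deals with this correction using the change-of-parameter identity $\rho(1+X)=h_\bfe\cdot\bfe$ between the cyclotomic parameter on $\Gm$ and the chosen generator $\bfe$ of $R^{\psi_\cF=0}$ on the Lubin--Tate formal group $\cF$: applying Frobenius conjugation $\Fr^i$ on the $i$-th component yields the factor $h_\bfe^{\Fr^i}$. Finally, under the projection $\wtd G_\infty\to\hat\Gamma_\infty$, the element $\sigma_0^i$ maps to $\sigma_{\rm cyc}^i|_{\hat K_\infty}$ because $\sigma_0$ and $\sigma_{\rm cyc}$ agree as Frobenius on $\Qp^\ur$ and differ only in the cyclotomic direction, which becomes trivial upon restriction to the anticyclotomic tower $\hat K_\infty$. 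Combining these pieces yields the claimed identity.

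I expect the main obstacle to be the precise bookkeeping of the Frobenius and change-of-parameter factors: correctly identifying $\sigma_0|_{\hat K_\infty}$ with $\sigma_{\rm cyc}|_{\hat K_\infty}$, and correctly attaching $h_\bfe^{\Fr^i}$ to each summand $\sigma_{\rm cyc}^i|_{\hat K_\infty}$. Once this is in place the lemma is a direct consequence of Steps 1 and 2 together with the definition of $\COL^\eta$.
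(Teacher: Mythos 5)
Your outline has the right overall shape—reduce the $\hat K_\infty$-pairing to an $F_\infty$-pairing, invoke the defining property \eqref{E:Col2.H} of the Coleman map, and then do Frobenius bookkeeping—but the critical middle steps are glossed over in a way that hides the actual content of the proof, and at least one of your simplifications is not correct as stated.

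The main issue is Step 2. Applying \eqref{E:Col2.H} to $\bfy={\rm res}_{F_\infty/\hat K_\infty}(\bfz)$ and unpacking $\Tr_{F/\Qp}$ along the Frobenius embeddings gives a sum of the form $\sum_{j}\bpair{\COL_\bfe^\ep(\bfy)^{\sigma_0^j}}{\eta}$, i.e.\ a \emph{single} Frobenius twist applied to the output of the Coleman map. You then assert a ``Galois equivariance'' $\COL_\bfe^\ep(\bfy)^{\sigma_0^j}=\COL_\bfe^\ep(\bfy^{\sigma_0^{-j}})$ to turn this into the sum defining $\COL^\eta$. That equivariance does not hold: $\COL_\bfe^\ep$ is only $\wtd\Lambda=\Zp\dBr{G_\infty}$-linear, not $\Gal(F_\infty/\Qp)$-equivariant, since both $\bfe$ and the Tate-module basis $\ep$ are transformed nontrivially by $\sigma_0$. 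In fact the paper's actual computation \eqref{E:37.H} produces a genuine \emph{double} sum $\sum_{i,j}\bpair{\Proj_{\hat K_n}(\COL_\bfe^\ep(\bfz^{\sigma_0^{-j}})^{\sigma_0^i})}{\eta}\sigma_0^j$, with one twist \emph{inside} the Coleman map (coming from the corestriction formula over the tower) and one twist \emph{outside} (coming from the trace $\Tr_{F_n/\Qp}$); collapsing these into a single twist loses exactly the structure that makes the final identity come out. Relatedly, attributing the $[F_\infty:\hat K_\infty]^{-1}$ factor to ``averaging over fibers in the pushforward'' is a misdiagnosis: that factor arises from how the pairing over the tower $F_\infty/F$ relates to the pairing over the tower $\hat K_\infty/\Q_p$ when one expands the corestriction by Frobenius conjugates $\bfz^{\sigma_0^{-j}}$, not from an averaged trace.

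The other substantive gap is in Step 3. The final identity rests on the equality $(\COL^\eta(\bfz))^{\sigma_0^i}=\COL^\eta(\bfz)\cdot\sigma_{\rm cyc}^i$, and this is not a formal consequence of ``$\sigma_0$ and $\sigma_{\rm cyc}$ agree on $\Qp^\ur$ and differ only in the cyclotomic direction, which dies on $\hat K_\infty$.'' The proof requires going back to the explicit formula \eqref{E:F2.H} with $e=g_\rho=\rho(1+X)$ and using the crucial observation that $g_\rho^{\sigma_0^{m-n-1}}(\ep_{n-m})=\zeta_{p^{n+1-m}}\in\Qp^{\rm cyc}$, so that conjugating by $\sigma_0^i$ is the same as shifting the argument $\bfz$ by $\sigma_{\rm cyc}^i$. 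Your sketch does not isolate or justify this step, yet it is the one place where the specific choice of generator $\bfe$ with $\rho(1+X)=h_\bfe\cdot\bfe$ actually enters. In short, the paper's proof proceeds by unwinding $\Xi_{n,V}(G_\bfe)$ and the Coleman map at each finite level $F_n$ \emph{rather than} by invoking \eqref{E:Col2.H} abstractly, precisely because the finite-level expansion is what reveals both the correct double-twist structure and the role of the cyclotomic values $\zeta_{p^{n+1-m}}$. I would suggest you re-derive the corestriction formula over the towers $F_\infty/F$ and $\hat K_\infty/\Qp$ at finite level $n$ and keep track of \emph{both} Frobenius sums before matching against the definition of $\COL^\eta$.
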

\begin{proof} We first recall that for every $e\in (R\ot_{\cO}\cW)^{\psi_\cF=0}$, the big exponential map $\Exp^\ep_{V,1}(\eta\ot e)$ in \thmref{T:BigExp} is given by \[\Exp^\ep_{V,1}(\eta\ot e)=(\exp_{F_n,V}(\Xi_{n,V}(G_e)))_{n=0,1,2,\ldots},\] where 
$G_e\in \bfD(V)\ot \sH_{1,\Qpur}(X)$ is a solution of $(1-\varphi\ot \varphi_\cF)G_e=\eta\ot e$. By the definition of $G_e$, we verify that
	\beq\label{E:F1.H}\begin{aligned}\Xi_{n,V}(G_e)&=p^{-(n+1)}(\varphi^{-(n+1)}\ot 1)G_e^{\Fr^{-(n+1)}}(\ep_n)\\
		&=\sum_{m=0}^{\infty} (p\varphi)^{-(n+1)}\varphi^m\eta\ot e^{\Fr^{m-(n+1)}}(\ep_{n-m})\\
		&=\sum_{m=0}^{n+1} (p\varphi)^{-(n+1)}\varphi^m\eta\ot e^{\Fr^{m-(n+1)}}(\ep_{n-m})+p^{-(n+1)}(1-\varphi\ot\Fr)^{-1}(\eta\ot e(0)).
		%&=\beta^{-(n+1)} \sum_{m=0}^{\infty}\eta\ot \zeta_{p^{n+1-m}}.
	\end{aligned}\eeq
Put $z_n=\Proj_{\hat K_n}(\bfz)$ and $\hat G_n=\Gal(F_n/F)$. Following the computation in \cite[Thm.~5.10]{kobayashi-pr}, we find that $\bpair{\Proj_{\hat K_n}(\COL^{\ep}_e(\bfz))}{\eta}$ is given by 
	\beq\label{E:F2.H}\sum_{m=0}^{\infty} \bpair{\sum_{\gamma\in \hat G_n}\exp^*_{\hat K_n,V}(z_n^{\gamma^{-1}\sg_0^{n+1-m}})\gamma}{\sum_{\tau\in\hat G_n} (p\varphi)^{-(n+1)}\varphi^m\eta\ot e^{\Fr^{m-(n+1)}}(\ep_{n-m})^{\tau\sg_0^{n+1-m}}\tau|_{\hat K_n}}.\eeq
	%The $\COL^\eta(\bfz)=\prolim_n \COL^\eta(\bfz)_n\in \prolim_n \cW[\Gamma_n]$, where 
	%\[\COL^\eta(\bfz)_n=\sum_{i=0}^{d-1} \bpair{\sum_{\gamma\in \Gamma_n}\exp^*_{\hat K_n,V}(z_n^{\gamma^{-1}})\gamma}{\sum_{\rho\in \Gamma_n}\sum_{\delta\in\Delta_t}x_n^{\rho\delta\sg_0^{n+1+i}}\rho(\sg_0|_{\hat K_n}^{n+1})}.\]
	On the other hand, \[\Proj_{\hat K_n}(\pair{\bfz}{{\rm cor}_{F_\infty/\hat K_\infty}(\bfw^\eta)}_{\hat K_\infty})=\frac{1}{[F_\infty:\hat K_\infty]}\sum_{j=1}^{[F:\Qp]}\Proj_{\hat K_n}(\pair{\bfz^{\sg_0^{-j}}}{\bfw^\eta}_{F_\infty})\sigma_0^j|_{\hat K_\infty},\] and  $\Proj_{\hat K_n}(\pair{\bfz^{\sg_0^{-j}}}{\bfw^\eta}_{F_\infty})$ equals
	\begin{align*}
&\sum_{\gamma\in\hat G_n}\pair{z_n^{\sg_0^{-j}\gamma^{-1}}}{\exp_{F_n,V}(\Xi_{n,V}(G_{\bfe})}_{F_n}\gamma|_{\hat K_\infty}
	=\Tr_{F_n/\Qp}\left(\bpair{\sum_{\gamma\in\hat G_n}\exp^*_{\hat K_n,V}(z_n^{\sg_0^{-j}\gamma^{-1}})\gamma|_{\hat K_\infty}}{\Xi_{n,V}(G_{\bfe})}_V\right)\\
	=&\sum_{m=0}^{\infty}\sum_{i=1}^{[F:\Qp]} \bpair{\sum_{\gamma\in \hat G_n}\exp^*_{\hat K_n,V}(z_n^{\gamma^{-1}\sigma_0^{i-j+n+1-m}})\gamma}{\sum_{\tau\in \hat G_n} (p\varphi)^{-(n+1)}\varphi^m\eta\ot \bfe^{\Fr^{m-(n+1)}}(\ep_{n-m})^{\tau\sg_0^{i+n+1-m}}\tau|_{\hat K_n}}\\
	=&\sum_{i=1}^{[F:\Qp]}\bpair{\Proj_{\hat K_n}(\COL_{\bfe}^{\ep}(\bfz^{\sg_0^{-j}})^{\sg_0^i})}{\eta}.
	%=&\sum_{i=0}^{d-1} \bpair{\sum_{\gamma\in \Gamma_n}\exp^*_{\hat K_n,V}(z_n^{\gamma^{-1}})\gamma}{\sum_{\rho\in G_n}x_n^{\rho}\rho |_{\hat K_n}}\sg^{-i}|_{\hat K_\infty}
	\end{align*}
From this, it follows immediately that
\beq\label{E:37.H}\begin{aligned}
\Proj_{\hat K_n}(\pair{\bfz}{{\rm cor}_{F_\infty/\hat K_\infty}(\bfw^\eta)}_{\hat K_\infty})&=\frac{1}{[F_\infty:\hat K_\infty]}\sum_{j=1}^{[F:\Qp]}\sum_{i=1}^{[F:\Qp]}\bpair{\Proj_{\hat K_n}(\COL_{\bfe}^{\ep}(\bfz^{\sg_0^{-j}})^{\sg_0^i})}{\eta}\sigma_0^j\\
&=\frac{1}{[F_\infty:\hat K_\infty]}\sum_{i=1}^{[F:\Qp]}\left(\COL^\eta(\bfz)\right)^{\sg_0^i}\cdot \frac{1}{h_{\bfe}^{\sg_0^i}}.
\end{aligned}\eeq	
On the other hand, by definition, \[\COL^\eta(\bfz)=\sum_{j=1}^{[F:\Qp]}\bpair{\COL^\ep_{g_\rho}(\bfz^{\sg_0^{-j}})}{\eta}\sg_0^j\] with $g_\rho=\rho(1+X)$. From \eqref{E:F2.H} with $e=g_\rho$ and the fact that $g_\rho^{\sg_0^{m-n-1}}(\ep_{n-m})=\zeta_{p^{n+1-m}}\in \Qp^{\rm cyc}$, we deduce that 
	\[\bpair{\COL^\ep_{g_\rho}(\bfz^{\sg_0^{-j}})^{\sg_0^i}}{\eta}= \bpair{\COL^\ep_{g_\rho}(\bfz^{\sg_0^{-j}\sg_{\rm cyc}^i})}{\eta},\]
	so $\left(\COL^\eta(\bfz)\right)^{\sg_0^i}=\COL^\eta(\bfz)\cdot\sg_{\rm cyc}^i$. Now the lemma follows from \eqref{E:37.H}.
\end{proof}

%\begin{cor}\label{C:34.H}Let $\bfz=\stt{\bfz_n}\in\wh\rmH^1(\base_\infty,V)$. Let $z:=\bfz_0\in S_p(E/\base)^{(r)}\ot_{\Qp}L$ and $x\in S_p(E/\base)^{(r)}$. Then 
%\[h^{(r)}(z,x)=(1-\beta^{-1})(1-\al^{-1})^{-1}\sum_{\frakp\mid p}[H_{\frakP}:K_\frakp]^{-1}\log_{\omega_E,\frakp}(x)\cdot \pair{\loc_\frakp(\bfz)}{{\rm cor}_{H_{\mathfrak P}/K_\frakp}(\bfw^\eta)}_{\base_{\infty,\frakp}}\pmod{J^{r+1}}.\] 
%\end{cor}
%\begin{proof}Since $K_\frakp=\Qp$ and $\dim_{\Qp}\rmH^1_f(\Qp,V)=1$, we can write
%$\loc_\frakp(x)=c\cdot {\rm cor}_{F/\Qp}(\Proj_F(\bfw^\eta))$ for some $c\in\Qp$. By \corref{C:13.H}, we obtain
%\[h^{(r)}(z,x)=\sum_{\frakp\mid p}\pair{\loc_\frakp(\bfz)}{c\cdot {\rm cor}_{H_{\frakP}/K_\frakp}(\bfw^\eta)}_{K_{\infty,\frakp}}\pmod{J^{r+1}}.\]
%It suffices to determine $c$. By \lmref{L:33.H},
%\[\pair{\log(\loc_\frakp(x))}{\omega_E}_{\dR}=c[F:\Qp]\cdot \pair{\frac{1-p^{-1}\varphi^{-1}}{1-\varphi}\eta}{\omega_E}_{\dR}.\]
%This shows that
%\[c=\frac{1-\beta^{-1}}{1-p^{-1}\beta}[F:\Qp]^{-1}\cdot\log_{\omega_E,\frakp}(x).\]
%\end{proof}

Now we give a formula for the derived $p$-adic heights over $K$ in terms of the Coleman map over $F_\infty$.  
%Note that we have $\varphi\eta=\alpha_p^{-1}\eta$.  
For every prime $v$ of $K$ above $p$, let $\rmH^1_{\rm fin}(K_v,V)\subset\rmH^1(K_v,V)$ be the Bloch--Kato finite subspace, and set
\beq\label{log}
\log_{\omega_E,v}=\pair{\log_{K_v,V}(-)}{\omega_E\ot t^{-1}}_{\rm dR}:
\rmH^1_{\rm fin}(K_v,V)\to L.
\eeq
Since $p$ is a prime of good reduction for $E$, by \cite[Cor.~3.8.4]{BK} we have $\rmH^1_{\rm exp}(K_v,V)=\rmH^1_{\rm fin}(K_v,V)$, 
%and \cite[Lem.~4.1]{howard-derived}, 
where $\rmH^1_{\rm exp}(K_v,V)\subset\rmH^1(K_v,V)$ is the image of 
%the Bloch--Kato exponential map 
${\rm exp}_{K_v,V}$. For the ease of notation, we write $\COL^\eta(-)$ for $\Proj_{\hat K_\infty}(\COL^\eta(-))$ in what follows.

\begin{prop}\label{P:37.H}Let $z,x\in S_p^{(r)}(E/\base)\ot_{\Qp} L$,  and suppose that there exists $\bfz\in\wh\rmH^1(\base_\infty,V)$ such that $\Proj_K(\bfz)=z$. If $\COL^\eta(\loc_{v}(\bfz))\in J^r\cW\dBr{\hat\Gamma_\infty}\ot\Qp$ for some $v\in\stt{\frakp,\ol{\frakp}}$, then 
%we have the derived $p$-adic height formula
	\begin{align*}h^{(r)}(z,x)
	&=-\frac{1-p^{-1}\al_p}{1-\al_p^{-1}}\cdot \frac{1}{[F_\infty:\hat K_\infty]}\\
	\times&\left(\log_{\omega_E,\frakp}(x)\COL^\eta(\loc_\frakp(\bfz))+\log_{\omega_E,\frakp}(\ol{x})\COL^\eta(\loc_{\frakp}(\ol{\bfz}))\right)\pmod{J^{r+1}\cW\dBr{\hat\Gamma_\infty}\ot\Qp},\end{align*}
where $\ol{x}$ and $\ol{\bfz}$ are the complex conjugates of $x$ and $\bfz$.
\end{prop}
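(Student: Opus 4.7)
The plan is to combine Corollary~\ref{C:13.H} with Lemmas~\ref{L:33.H} and~\ref{L:36.H}. Since $p=\frakp\ol\frakp$ splits in $K$, the completion $\hat K_\frakp$ equals $\Qp$ and the Bloch--Kato finite subspace $\rmH^1_{\rm fin}(\hat K_\frakp,V)$ is one-dimensional over $L$. Hence $\loc_\frakp(x)$ and $\Proj_{\hat K_\frakp}{\rm cor}_{F_\infty/\hat K_\infty}(\bfw^\eta)$ are $L$-proportional. Using Lemma~\ref{L:33.H} together with the $\varphi$-eigenvalue relation $\varphi\eta=p^{-1}\al_p\eta$, I would first rewrite
\[
\Proj_F(\bfw^\eta)=\exp_{F,V}\bigl(\tfrac{1-\al_p^{-1}}{1-p^{-1}\al_p}\cdot\eta\bigr),
\]
and then, using compatibility of the Bloch--Kato logarithm with corestriction along with the normalization $\pair{\eta}{\omega_E\otimes t^{-1}}_{\dR}=1$, compute
\[
\log_{\omega_E,\frakp}\bigl(\Proj_{\hat K_\frakp}{\rm cor}_{F_\infty/\hat K_\infty}(\bfw^\eta)\bigr)=[F:\hat K_\frakp]\cdot\frac{1-\al_p^{-1}}{1-p^{-1}\al_p}.
\]
Rescaling accordingly, I would set
\[
\bfw_\frakp:=\frac{(1-p^{-1}\al_p)\log_{\omega_E,\frakp}(x)}{(1-\al_p^{-1})[F:\hat K_\frakp]}\cdot{\rm cor}_{F_\infty/\hat K_\infty}(\bfw^\eta),
\]
which projects to $\loc_\frakp(x)$, and define $\bfw_{\ol\frakp}$ analogously via complex conjugation (which swaps $\frakp\leftrightarrow\ol\frakp$), with $\log_{\omega_E,\frakp}(\ol x)$ in place of $\log_{\omega_E,\frakp}(x)$.

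With these two local classes in hand, Corollary~\ref{C:13.H} reduces the computation to
\[
h^{(r)}(z,x)\equiv-\pair{\loc_\frakp(\bfz)}{\bfw_\frakp}_{K_{\infty,\frakp}}-\pair{\loc_{\ol\frakp}(\bfz)}{\bfw_{\ol\frakp}}_{K_{\infty,\ol\frakp}}\pmod{J^{r+1}}.
\]
Lemma~\ref{L:36.H} then converts the first pairing into
\[
\pair{\loc_\frakp(\bfz)}{{\rm cor}_{F_\infty/\hat K_\infty}(\bfw^\eta)}_{\hat K_\infty}=\COL^\eta(\loc_\frakp(\bfz))\cdot\frac{1}{[F_\infty:\hat K_\infty]}\sum_{i=1}^{[F:\Qp]}\frac{\sg_{\rm cyc}^i|_{\hat K_\infty}}{h_\bfe^{\Fr^i}}.
\]
Under the hypothesis $\COL^\eta(\loc_\frakp(\bfz))\in J^r\cdot\cW\dBr{\hat\Gamma_\infty}\otimes\Qp$, the right-hand side already lies in $J^r$, so only the image of the correction factor modulo $J$ matters. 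I would verify that this image equals $[F:\hat K_\frakp]$, so that after division by $[F_\infty:\hat K_\infty]=[F:\hat K_\frakp]$ it reduces to $1$. Substituting this back together with the scalar defining $\bfw_\frakp$ produces exactly the $\frakp$-contribution
\[
-\frac{1-p^{-1}\al_p}{1-\al_p^{-1}}\cdot\frac{\log_{\omega_E,\frakp}(x)\cdot\COL^\eta(\loc_\frakp(\bfz))}{[F_\infty:\hat K_\infty]}\pmod{J^{r+1}},
\]
and the analogous computation at $\ol\frakp$ supplies the second term, yielding the stated formula.

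The principal technical obstacle is the mod-$J$ identity $\sum_{i=1}^{[F:\Qp]}\sg_{\rm cyc}^i|_{\hat K_\infty}/h_\bfe^{\Fr^i}\equiv[F:\hat K_\frakp]$. The three ingredients here---the cyclotomic twists $\sg_{\rm cyc}^i|_{\hat K_\infty}$ (arising from the embedding of $\Qp^{\rm cyc}$ into the Lubin--Tate tower), the Frobenius iterates $h_\bfe^{\Fr^i}$ (arising from the choice of generator $\bfe$ of $R^{\psi_\cF=0}$), and the index $[F:\Qp]$ (arising from the corestriction normalization in the definition of $\COL^\eta$)---originate in a priori unrelated pieces of the setup, and one must show that they collapse cleanly into the single integer $[F:\hat K_\frakp]$ modulo the augmentation ideal $J\subset\Lambda_\cO$, using the decomposition $\hat\Gamma_\infty\supset\Gamma_\infty$ coming from the totally ramified anticyclotomic extension of $\hat K_\frakp$. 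Once this combinatorial identity is checked, the rest of the proof is the formal assembly above.
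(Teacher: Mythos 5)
Your proposal follows essentially the same route as the paper: Lemma~\ref{L:33.H} to evaluate $\Proj_F(\bfw^\eta)$ and normalize the local auxiliary class, Corollary~\ref{C:13.H} to express $h^{(r)}$ as a sum of local pairings, and Lemma~\ref{L:36.H} together with a mod-$J$ reduction of the correction factor. The rewrite of $\Proj_F(\bfw^\eta)$ using $\varphi\eta=p^{-1}\al_p\eta$, the computation of the scaling constant, and the reduction step are all in order.

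One bookkeeping point deserves correction, though it does not affect the final formula. You assert (twice) that $[F_\infty:\hat K_\infty]=[F:\hat K_\frakp]$ and use this to cancel the factor $1/[F_\infty:\hat K_\infty]$ coming from Lemma~\ref{L:36.H} against the image of the correction factor mod $J$. This identity is not needed and is not verified (nor used) in the paper. What actually happens is simpler: the sum $\sum_{i=1}^{[F:\Q_p]}\sg_{\rm cyc}^i|_{\hat K_\infty}/h_\bfe^{\Fr^i}$ reduces to $[F:\Q_p]$ modulo $J$ (because each $\sg_{\rm cyc}^i|_{\hat K_\infty}\in\hat\Gamma_\infty$ reduces to $1$ and, crucially, $h_\bfe\equiv 1\pmod J$, which the paper deduces from $\rho(1+X)=h_\bfe\cdot\bfe$ and the normalization $\bfe(0)=1$ --- a point your sketch alludes to but does not make explicit); this $[F:\Q_p]$ then cancels the $[F:\hat K_\frakp]=[F:\Q_p]$ in the denominator of your $\bfw_\frakp$, leaving precisely $1/[F_\infty:\hat K_\infty]$ intact in the final expression. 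So the factor $[F_\infty:\hat K_\infty]$ is never compared with $[F:\Q_p]$; it simply survives from Lemma~\ref{L:36.H} untouched. Once you replace the spurious identification by this direct cancellation and supply the short verification of $h_\bfe\equiv 1\pmod J$, the proof agrees with the one in the paper.
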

\begin{proof}Let $\bfw_p:={\rm cor}_{F_\infty/\hat K_\infty}(\bfw^\eta)\in \wh\rmH^1_{\rm fin}(\hat K_\infty,V)$. Since $\dim_{\Qp}\rmH^1_{\rm fin}(\Qp,V)=1$, we can write
	\[\loc_\frakp(x)=c\cdot \Proj_{\Qp}(\bfw_p)=c\cdot {\rm cor}_{F/\Qp}(\Proj_F(\bfw^\eta))\] for some $c\in\Qp$. By \lmref{L:33.H},
	\[\pair{\log_{\Q_p,V}(\loc_\frakp(x))}{\omega_E\ot t^{-1}}_{\dR}=c[F:\Qp]\cdot \left\langle\frac{1-p^{-1}\varphi^{-1}}{1-\varphi}\eta,\omega_E\ot t^{-1}\right\rangle_{\dR}.\]
	Since $\varphi\eta=p^{-1}\alpha_p\cdot\eta$, this shows that
	\[
	c=\frac{1-p^{-1}\al_p}{1-\al_p^{-1}}\cdot [F:\Qp]^{-1}\cdot\log_{\omega_E,\frakp}(x).
	\] 
Applying \corref{C:13.H}, we find that that 
\begin{align*}&h^{(r)}(z,x)=-(1-p^{-1}\al_p)(1-\al_p^{-1})^{-1}[F:\Qp]^{-1}\\
	&\times\left(\log_{\omega_E,\frakp}(x)\cdot \pair{\loc_\frakp(\bfz)}{\bfw_p}_{\hat K_\infty}+
	\log_{\omega_E,\frakp}(\ol{x})\cdot \pair{\loc_\frakp(\ol{\bfz})}{\bfw_p}_{\hat K_\infty}
	\right)\pmod{J^{r+1}}.\end{align*}
	Since $\rho(1+X)=h_{\bfe}\cdot \bfe $ and $\bfe(0)=1$, we find that 
	$1=\bfe(0)\cdot (h_{\bfe}|_{\gen=1})$ and hence $h_{\bfe}\con 1\pmod{J}$. The assertion now follows from the above equation and \lmref{L:36.H}.\end{proof}

%\bibliographystyle{amsalpha}
%\bibliography{mybib}
%\end{document}

%\section{Perrin-Riou's theory for Lubin--Tate $\bZ_p$-extensions}

\section{Euler system construction of theta elements}
\label{sec:GKC}
%!TEX root = Kato.tex
\def\fraka{\mathfrak a}
In this section we prove Theorem~\ref{thm:theta-Euler}, recovering the square-root anticyclotomic $p$-adic $L$-functions of Bertolini--Darmon \cite{BDmumford-tate} (in the definite case) 
as the image of a $p$-adic family of diagonal cycles 
\cite{DR2} under 
%Perrin-Riou's big exponential map for a certain Lubin--Tate $\bZ_p$-extension. 
the Coleman map of $\S\ref{subsec:Col}$.
%The key ingredients in the proof are the factorization of \cite[\S{8}]{hsieh-triple} and the explicit reciprocity law of \cite[\S{5}]{DR2}.

\subsection{Ordinary $\Lambda$-adic forms}\label{subsec:Hida}

Fix a prime $p>2$. Let $\cR$ be a normal domain finite flat over $\Lambda:=\cO\dBr{1+p\bZ_p}$, where $\cO$ is the ring of integers of a finite extension $L/\bQ_p$. We say that a point $x\in{\rm Spec}\;\cR(\overline{\bQ}_p)$ is \emph{locally algebraic} if its restriction to $1+p\bZ_p$ is given by $x(\gamma)=\gamma^{k_x}\epsilon_x(\gamma)$ for some integer $k_x$, called the \emph{weight} of $x$, and some finite order character $\epsilon_x:1+p\bZ_p\rightarrow\mu_{p^\infty}$; we say that $x$ is \emph{arithmetic} if it has weight $k_x\geqslant 2$. Let $\mathfrak{X}_\cR^+$ be the set of arithmetic points. 

Fix a positive integer $N$ prime to $p$, and let $\chi:(\bZ/Np\bZ)^\times\rightarrow\cO^\times$ be a Dirichlet character modulo $Np$. Let $S^{o}(N,\chi,\cR)$ be the space of \emph{ordinary $\cR$-adic cusp forms} of tame level $N$ and branch character $\chi$, consisting of formal power series 
\[
\boldsymbol{f}(q)=\sum_{n=1}^\infty a_n(\boldsymbol{f})q^n\in\cR\dBr{q}
\] 
such that for every $x\in\mathfrak{X}_\cR^+$ the specialization $\boldsymbol{f}_x(q)$ 
%=\sum x(a_n(\boldsymbol{f}))q^n$ 
is the $q$-expansion of a $p$-ordinary cusp form $\boldsymbol{f}_x\in S_{k_x}(Np^{r_x+1},\chi\omega^{2-k_x}\epsilon_x)$. Here $r_x\geqslant 0$ 
is such that $\epsilon_x(1+p)$ has exact order $p^{r_x}$, and  $\omega:(\bZ/p\bZ)^\times\rightarrow\mu_{p-1}$ is the Teichm\"uller character. 

%By Hida's theory, for any $\cR$-adic cusp form $\bff\in S^o(N,\chi,\cR)$, and letting $\mathfrak{X}_{\cR}^{\rm cls}$ be the set of locally algebraic points in ${\rm Spec}\;\cR_{}(\overline{\bQ}_p)$ for which $\bff_x(q)$ is the $q$-expansion of a classical modular form, we have $\mathfrak{X}_{\cR}^+\subset\mathfrak{X}^{\rm cls}_{\cR}$.
 %Recall Hida's ordinary projector 
%\[
%e=\lim_{n\to\infty}U_p^{n!},
%\] 
%where $U_p$ is given by $a_n(\mathbf{f})\mapsto a_{np}(\mathbf{f})$ on $q$-expansions.
We say that $\boldsymbol{f}\in S^o(N,\chi,\cR)$ is a \emph{primitive Hida family}    
if for every $x\in\mathfrak{X}_\cR^+$ we have that $\boldsymbol{f}_x$ is an ordinary $p$-stabilized newform (in the sense of \cite[Def.~2.4]{hsieh-triple}) of tame level $N$.  
%we thus have $\mathfrak{X}_\cR^+\subset\mathfrak{X}^{\rm cls}_\cR$. 
Given a primitive Hida family  $\bff\in S^o(N,\chi,\cR)$, and writing $\chi=\chi'\chi_p$ with $\chi'$ (resp. $\chi_p$) a Dirichlet modulo $N$ (resp. $p$), there is a primitive ${\bff}^\iota\in S^o(N,\chi_p\overline{\chi}',\cR)$ with Fourier coefficients
\[
a_\ell({\bff}^\iota)=\left\{
\begin{array}{ll}
\overline{\chi}'(\ell)a_\ell(\bff)&\textrm{if $\ell\nmid N$,}\\
a_\ell(\bff)^{-1}\chi_p\omega^2(\ell)\langle\ell\rangle_\cR\ell^{-1}&
\textrm{if $\ell\mid N$,}
\end{array}
\right.
\]
having the property that for every $x\in\mathfrak{X}_\cR^+$ the specialization ${\bff}^\iota_x$ is the $p$-stabilized newform attached to the character twist $\bff_x\otimes\overline{\chi}'$.

By \cite{hida86b} (\emph{cf.} \cite[Thm.~2.2.1]{wiles88}), attached to every primitive Hida family $\bff\in S^o(N,\chi,\cR)$ there is a continuous $\cR$-adic representation $\rho_{\boldsymbol{f}}:G_\bQ\rightarrow{\rm GL}_2({\rm Frac}\;\cR)$ %, where $F_\cR$ is the fraction field of $\cR$,
which is unramified outside $Np$ and such that for every prime $\ell\nmid Np$,
\[
{\rm tr}\;\rho_{\boldsymbol{f}}({\rm Frob}_\ell)=a_\ell(\bff),
\quad
{\rm det}\;\rho_{\bff}({\rm Frob}_\ell)=\chi\omega^2(\ell)\langle\ell\rangle_\cR\ell^{-1},
\]
where 
%${\rm Frob}_\ell$ is a geometric Frobenius element at $\ell$, and 
$\langle\ell\rangle\in\cR^\times$ is the image of $\ell\omega^{-1}(\ell)$ under the natural map $1+p\bZ_p\rightarrow\cO\dBr{1+p\bZ_p}^\times=\Lambda^\times\rightarrow\cR^\times$. In particular, letting 
%$\varepsilon_{\rm cyc}:G_\bQ\rightarrow\bZ_p^\times$ be the $p$-adic cyclotomic character and 
$\langle\varepsilon_{\rm cyc}\rangle_\cR:G_\bQ\rightarrow\cR^\times$ be defined by $\langle\varepsilon_{\rm cyc}\rangle_{\cR}(\sigma)=\langle\varepsilon_{\rm cyc}(\sigma)\rangle_{\cR}$, it follows that $\rho_{\boldsymbol{f}}$ has determinant $\chi_\cR^{-1}\varepsilon_{\rm cyc}^{-1}$, where $\chi_\cR:G_\bQ\rightarrow\cR^\times$ is given by $\chi_\cR:=\sigma_\chi\langle\varepsilon_{\rm cyc}\rangle^{-2}\langle\varepsilon_{\rm cyc}\rangle_{\cR}$, with $\sigma_\chi$ the Galois character sending ${\rm Frob}_\ell\mapsto\chi(\ell)^{-1}$. Moreover, by \cite[Thm.~2.2.2]{wiles88} the restriction of $\rho_{\boldsymbol{f}}$ to $G_{\bQ_p}$ is given by
\begin{equation}\label{eq:wiles}
\rho_{\bff}\vert_{G_{\bQ_p}}\sim\left(\begin{array}{cc}\psi_\bff&*\\
0&\psi_{\bff}^{-1}\chi_{\cR}^{-1}\varepsilon_{\rm cyc}^{-1}\end{array}\right)
\end{equation}
where $\psi_{\bff}:G_{\bQ_p}\rightarrow\cR^\times$ is the unramified character with $\psi_\bff({\rm Frob}_p)=a_p(\bff)$. 

%Throughout the paper, the shall impose the following

%\begin{hyp}[CR]
%The residual representation $\overline{\rho}_{\bff}$ is absolutely irreducible and $p$-distinguished.
%\end{hyp}

%Under this hypothesis, following \cite{hida-modules} one can introduce the \emph{congruence number} $\eta_{\bff}\in\cR$, defined as a generator of the congruence ideal of $\bff$ (see \cite[\S{3.3}]{hsieh-triple}).

\subsection{Triple product $p$-adic $L$-function}\label{subsec:tripleL}

Let 
\[
(\bff,\bfg,\bfh)\in S^o(N_\bff,\chi_{\bff},\cR_\bff)\times S^o(N_\bfg,\chi_{\bfg},\cR_\bfg)\times S^o(N_\bfh,\chi_{\bfh},\cR_\bfh)
\]
be a triple of primitive Hida families. 
%and assume the following hypotheses hold:
%\begin{equation}\label{eq:hyp-ev}
%\chi_{\bff}\chi_{\bfg}\chi_{\bfh}=\omega^{2a}\;\textrm{for some %$a\in\bZ$,}\tag{ev}
%\end{equation}
%\begin{equation}\label{eq:hyp-sq}
%\textrm{${\rm gcd}(N_\bff,N_\bfg,N_\bfh)$ is square-free.}\tag{sq}
%\end{equation}
%
Set 
\[
\mathcal{R}:=\cR_{\bff}\hat{\otimes}_{\cO}\cR_{\bfg}\hat{\otimes}_{\cO}\cR_{\bfh}, 
\]
which is a finite extension of the three-variable Iwasawa algebra $\mathcal{R}_0:=\Lambda\hat{\otimes}_\cO\Lambda\hat{\otimes}_\cO\Lambda$, 
%\simeq\cO\dBr{T_1,T_2,T_3}$. 
and define the weight space $\mathfrak{X}_{\mathcal{R}}^\bff$ for the triple $(\bff,\bfg,\bfh)$ in the \emph{$\bff$-dominated unbalanced range} by
\begin{equation}\label{eq:bal}
\mathfrak{X}_{\mathcal{R}}^\bff:=\left\{
(x,y,z)\in\mathfrak{X}_{\cR_\bff}^+\times\mathfrak{X}_{\cR_\bfg}^{\rm cls}\times\mathfrak{X}_{\cR_\bfh}^{\rm cls}\;\colon\;k_x\geqslant k_y+k_z\;\textrm{and}\;k_x\equiv k_y+k_z\;({\rm mod}\;2)\right\},
\end{equation}
where $\mathfrak{X}_{\cR_{\bfg}}^{\rm cls}\supset\mathfrak{X}_{\cR_{\bfg}}^+$ (and similarly $\mathfrak{X}_{\cR_{\bfh}}^{\rm cls}$) is the set of locally algebraic points in ${\rm Spec}\;{\cR_\bfg}(\overline{\bQ}_p)$ for which $\bfg_x(q)$ is the $q$-expansion of a classical modular form.
%The sets $\mathfrak{X}_{\mathcal{R}}^\bfg$ and $\mathfrak{X}_{\mathcal{R}}^\bfh$ are defined similarly, and we let the weight space in the \emph{balanced range} to be
%the complement of $\mathfrak{X}_{\mathcal{R}}^\bff\sqcup\mathfrak{X}_{\mathcal{R}}^\bfg\sqcup\mathfrak{X}_{\mathcal{R}}^\bfh$ in $\mathfrak{X}_{\mathcal{R}}^+:=\mathfrak{X}_{\cR_{\bff}}^+\times\mathfrak{X}_{\cR_{\bfg}}^+\times\mathfrak{X}_{\cR_{\bfh}}^+$, i.e.:
%\begin{align*}
%\mathfrak{X}_{\mathcal{R}}^{\rm bal}:=\left\{
%(x,y,z)\in\mathfrak{X}_{\cR_\bff}^+\times\mathfrak{X}_{\cR_\bfg}^{\rm cls}\times\mathfrak{X}_{\cR_\bfh}^{\rm cls}\;\colon\;k_x\geqslant k_y+k_z\;\textrm{and}\;k_x\equiv k_y+k_z\;({\rm mod}\;2)\right\}.
%\end{align*} 

For $\boldsymbol{\phi}\in\{\bff,\bfg,\bfh\}$ and a positive integer $N$ prime to $p$ and divisible by $N_{\boldsymbol{\phi}}$, define the space of \emph{$\Lambda$-adic test vectors} $S^o(N,\chi_{\boldsymbol{\phi}},\cR_{\boldsymbol{\phi}})[\boldsymbol{\phi}]$ to be the $\cR_{\boldsymbol{\phi}}$-submodule of $S^o(N,\chi_{\boldsymbol{\phi}},\cR_{\boldsymbol{\phi}})$ generated by $\{\boldsymbol{\phi}(q^d)\}$, as $d$ ranges over the positive divisors of $N/N_{\boldsymbol{\phi}}$.

For the next result, set $N:={\rm lcm}(N_\bff,N_\bfg,N_\bfh)$, and consider the following hypothesis: 
\begin{equation}\label{eq:hyp-sigma}
\textrm{for some $(x,y,z)\in\mathfrak{X}_{\mathcal{R}}^\bff$, we have  $\eps_q(\bff_x^\circ,\bfg_y^\circ,\bfh_z^\circ)=+1$ for all $q\mid N$.}\tag{$\Sigma^-$}
\end{equation}
Here $\eps_q(\bff_x^\circ,\bfg_y^\circ,\bfh_z^\circ)$ denotes the local root number of the Kummer self-dual twist of the Galois representations attached to the newforms $\bff_x^\circ$, $\bfg_y^\circ$, and $\bfh_z^\circ$ corresponding to $\bff_x$, $\bfg_y$, and $\bfh_z$, respectively. 
%It is known that if the condition in hypothesis ($\Sigma^-$) holds for some $(x,y,z)\in\mathfrak{X}_{\mathcal{R}}^\bff$ then it holds for all points in $\mathfrak{X}_{\mathcal{R}}^\bff$. 

\begin{thm}\label{thm:triple-L}
Assume that the residual representation $\bar{\rho}_{\bff}$ satisfies
\begin{itemize}
\item[{\rm (CR)}] $\bar\rho_{\bff}$ is absolutely irreducible and $p$-distinguished,
\end{itemize}
and that, in addition to $(\Sigma^-)$, the triple $(\bff,\bfg,\bfh)$ satisfies
\begin{itemize}
\item[{\rm (ev)}] $\chi_{\bff}\chi_{\bfg}\chi_{\bfh}=\omega^{2a}$ for some $a\in\bZ$,
\item[{\rm (sq)}] ${\rm gcd}(N_\bff,N_\bfg,N_\bfh)$ is square-free.
\end{itemize}
Then there exist $\Lambda$-adic test vectors $(\breve{\bff}^\star,\breve{\bfg}^\star,\breve{\bfh}^\star)$ and an element 
\[
\mathscr{L}_p^f(\breve{\bff}^\star,\breve{\bfg}^\star,\breve{\bfh}^\star)\in\mathcal{R}
\]
such that for all $(x,y,z)\in\mathfrak{X}_{\mathcal{R}}^\bff$ of weight $(k,\ell,m)$:
\[
\nu_{(x,y,z)}(\mathscr{L}_p^f(\breve{\bff}^\star,\breve{\bfg}^\star,\breve{\bfh}^\star)^2)=\frac{\Gamma(k,\ell,m)}{2^{\alpha(k,\ell,m)}}\cdot\frac{\mathcal{E}(\bff_x,\bfg_y,\bfh_z)^2}{\mathcal{E}_0(\bff_x)^2\cdot\mathcal{E}_1(\bff_x)^2}\cdot\prod_{q\mid N}c_q\cdot\frac{L(\bff_x^\circ\ot \bfg_y^\circ\ot \bfh_z^\circ,c)}{\pi^{2(k-2)}\cdot\lVert\bff_x^\circ\rVert^2},
\]
where: 
\begin{itemize}
    \item $c=(k+\ell+m-2)/2$,
	\item $\Gamma(k,\ell,m)=(c-1)!\cdot(c-m)!\cdot(c-\ell)!\cdot(c+1-\ell-m)!$,
	%\item $\bff_x^\circ$ ,$\bfg_y^\circ$, $\bfh_z^\circ$ are the newforms corresponding to $\bff_x$, $\bfg_y$, $\bfh_z$, respectively,
	\item $\alpha(k,\ell,m)\in\mathcal{R}$ is a linear form in the variables $k$, $\ell$, $m$,
	\item $\mathcal{E}(\bff_x,\bfg_y,\bfh_z)=(1-\frac{\beta_{\bff_x}\alpha_{\bfg_y}\alpha_{\bfh_z}}{p^c})(1-\frac{\beta_{\bff_x}\beta_{\bfg_y}\alpha_{\bfh_z}}{p^c})(1-\frac{\beta_{\bff_x}\alpha_{\bfg_y}\beta_{\bfh_z}}{p^c})(1-\frac{\beta_{\bff_x}\beta_{\bfg_y}\beta_{\bfh_z}}{p^c})$,
\item
$\mathcal{E}_0(\bff_x)=(1-\frac{\beta_{\bff_x}}{\alpha_{\bff_x}})$, 
$\mathcal{E}_1(\bff_x)=(1-\frac{\beta_{\bff_x}}{p\alpha_{\bff_x}})$,
\end{itemize}
and $\lVert\bff_x^\circ\rVert^2$ is the Petersson norm of $\bff^\circ_x$ on $\Gamma_0(N_\bff)$.
\end{thm}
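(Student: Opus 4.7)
The plan is to construct $\mathscr{L}_p^f(\breve{\bff}^\star,\breve{\bfg}^\star,\breve{\bfh}^\star)$ as the image of $\breve{\bfg}^\star\otimes\breve{\bfh}^\star$ under a suitably normalized $\bff$-isotypic ordinary projection, following the strategy of \cite{hsieh-triple}. Under $(\Sigma^-)$, all local Gross--Prasad signs at finite primes are $+1$, so the relevant trilinear period is realized on $\mathrm{GL}_2$ and Ichino's global formula applies directly. The assumption (ev) encodes Kummer self-duality of the triple-product motive (so that a well-defined central critical value exists and an Iwasawa-theoretic interpolation makes sense), while (sq) is needed to pin down the optimal local test vectors at primes dividing the common level, which produces the unambiguous local constants $c_q$.

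First I would fix test vectors $(\breve{\bff}^\star,\breve{\bfg}^\star,\breve{\bfh}^\star)$ at each bad prime so that the local Ichino integrals are explicit nonzero constants $c_q$. Next I would construct a $\Lambda$-adic nearly-holomorphic form interpolating the Maass--Shimura derivatives $\delta^{j_1}\bfg_y\cdot\delta^{j_2}\bfh_z$, with $(j_1,j_2)$ determined (up to symmetry) by the unbalanced weight relation $k_x-k_y-k_z=2(j_1+j_2)$; the existence of such a $\Lambda$-adic operator across the $\bff$-dominated range rests on the weight inequality and parity condition in $(\ref{eq:bal})$. Applying Hida's ordinary projector $e^{\rm ord}$ produces a class in the finite-rank ordinary module $S^o(N,\chi_\bfg\chi_\bfh,\cR_\bfg\hat{\otimes}\cR_\bfh)$, which I would pair with $\breve{\bff}^\star$ using Hida's $\Lambda$-adic Petersson product normalized via the congruence module of $\bff$ (controlled by (CR)). By Hida's control theorem, the resulting element lies in $\mathcal{R}$.

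To verify the interpolation at $(x,y,z)\in\mathfrak{X}_{\mathcal{R}}^\bff$, the construction specializes to a global trilinear Petersson period $I(\bff_x,\bfg_y,\bfh_z)$, and Garrett's integral representation combined with Ichino's formula factorizes $|I(\bff_x,\bfg_y,\bfh_z)|^2$ as an Eulerian product of local zeta integrals. The archimedean factor contributes $\Gamma(k,\ell,m)/2^{\alpha(k,\ell,m)}$; the local integrals at $q\mid N$ contribute the fixed constants $\prod_{q\mid N}c_q$; the Petersson period $\pi^{2(k-2)}\lVert\bff_x^\circ\rVert^2$ appears from the Rankin--Selberg normalization in Ichino's formula; and the local integral at $p$, computed on the ordinary $p$-stabilizations of $\bff_x$, $\bfg_y$, $\bfh_z$, yields $\mathcal{E}(\bff_x,\bfg_y,\bfh_z)^2$, with the ratio $(\mathcal{E}_0(\bff_x)\mathcal{E}_1(\bff_x))^{-2}$ arising from the discrepancy between the naive ordinary projection and the $\bff_x$-isotypic projection defined via the congruence ideal.

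The main technical obstacle is the explicit computation of the local trilinear integral at $p$ in the $\bff$-dominated unbalanced range: the ordinary projection of $\delta^{j_1}\bfg_y\cdot\delta^{j_2}\bfh_z$ has a non-obvious Whittaker model, and matching its local pairing against the ordinary Whittaker function of $\bff_x$ with the predicted Euler factor $\mathcal{E}(\bff_x,\bfg_y,\bfh_z)/(\mathcal{E}_0(\bff_x)\mathcal{E}_1(\bff_x))$ requires an inductive Whittaker-level calculation generalizing Hida's treatment of Rankin--Selberg $p$-adic $L$-functions to the triple-product setting. A secondary subtlety is the $\Lambda$-adic interpolation of the Maass--Shimura operators on ordinary forms, which rests on the Hida--Tilouine theory of $p$-adic nearly holomorphic forms and the fact that ordinary projection kills the higher-depth nearly holomorphic contributions, leaving a $\Lambda$-adically varying classical form.
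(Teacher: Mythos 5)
The paper does not prove this theorem itself but quotes it directly from \cite[Thm.~A]{hsieh-triple} (construction in \S 3.6 of that reference, interpolation formula in \S 7), and your outline faithfully reconstructs the strategy of the cited work. You pair $\breve{\bff}^\star$ against the ordinary projection of a $\Lambda$-adic form built from $\breve{\bfg}^\star$ and a $p$-depleted Serre-operator twist of $\breve{\bfh}^\star$, normalize through the congruence module of $\bff$ (which is where (CR) enters), and verify the interpolation via Ichino's formula and explicit local zeta integrals at $p$ and at $q\mid N$ --- essentially the same approach, presented as an outline rather than a complete proof.
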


\begin{proof}
See \cite[Thm.~A]{hsieh-triple}. More specifically, the construction of $\mathscr{L}_p^f(\breve{\bff}^\star,\breve{\bfg}^\star,\breve{\bfh}^\star)$ under hypotheses (CR), (ev), and (sq) is given in \cite[\S{3.6}]{hsieh-triple} (where it is denoted $\mathscr{L}_{\boldsymbol{F}}^\bff$), and the proof of its interpolation property assuming ($\Sigma^-$) is contained in \cite[\S{7}]{hsieh-triple}. 
\end{proof}

\subsection{Triple tensor product of big Galois representations}
\label{subsec:Gal}

Let $(\bff,\bfg,\bfh)$ be a triple of primitive Hida families with $\chi_{\bff}\chi_{\bfg}\chi_{\bfh}=\omega^{2a}$ for some $a\in\bZ$. 
For $\boldsymbol{\phi}\in\{\bff,\bfg,\bfh\}$, let $V_{\boldsymbol{\phi}}$ be the natural lattice in $({\rm Frac}\;\cR_{\boldsymbol{\phi}})^2$ realizing the Galois representation $\rho_{\boldsymbol{\phi}}$ in the \'etale cohomology of modular curves (see \cite{OhtaII}), and set
\[
\bV_{\bff\bfg\bfh}:=V_\bff\hat\otimes_{\cO}V_{\bfg}\hat\otimes_{\cO}V_{\bfh}.
\]
This has rank $8$ over $\mathcal{R}$, and by hypothesis its determinant can be written as $\det\bV_{\bff\bfg\bfh}=\mathcal{X}^2\varepsilon_{\rm cyc}$ for a $p$-ramified Galois character $\mathcal{X}$ taking the value $(-1)^a$ at complex conjugation. 
Similarly as in \cite[Def.~2.1.3]{howard-invmath}, 
we define the \emph{critical twist} %$\bV_{\bff\bfg\bfh}^\dagger$ by
\begin{equation}\label{eq:crit}
\bV_{\bff\bfg\bfh}^\dagger:=\bV_{\bff\bfg\bfh}\otimes\mathcal{X}^{-1}.\nonumber
\end{equation}
More generally, for any multiple $N$ of $N_{\boldsymbol{\phi}}$ one can define Galois modules $V_{\boldsymbol{\phi}}(N)$ by working in tame level $N$; these split non-canonically into a finite direct sum of the $\cR_{\boldsymbol{\phi}}$-adic representations $V_{\boldsymbol{\phi}}$ (see \cite[\S{1.5.3}]{DR2}), 
and they define $\mathbb{V}_{\bff\bfg\bfh}^\dagger(N)$ for any $N$ divisible by ${\rm lcm}(N_{\bff},N_{\bfg},N_{\bfh})$. 

If $f$ is a classical specialization of $\bff$ with associated $p$-adic Galois representation $V_f$, we let $\mathbb{V}_{f,\bfg\bfh}$ be the quotient of $\mathbb{V}_{\bff\bfg\bfh}$ given by
\[
\bV_{f,{\bfg\bfh}}:=V_f\otimes_{\cO}V_{\bfg}\hat{\otimes}_{\cR}V_{\bfh}.
\]
Denote by $\bV^\dagger_{f,{\bfg\bfh}}$ the corresponding quotient  of $\bV^\dagger_{\bff\bfg\bfh}$, and by $\bV^\dagger_{f,{\bfg\bfh}}(N)$ its level $N$ counterpart.

\subsection{Theta elements and factorization} 
\label{subsec:theta}

We recall the factorization proven in \cite[\S{8}]{hsieh-triple}. Let $f\in S_2(pN_f)$ be a $p$-stabilized newform of tame level $N_f$ defined over $\cO$, let $f^\circ\in S_2(N_f)$ be the associated newform, and let $\alpha_p=\alpha_p(f)\in\cO^\times$ be the $U_p$-eigenvalue of $f$. Let $K$ be an imaginary quadratic field of discriminant $D_K$ prime to $N_f$. Write 
\[
N_f=N^+ N^-
\] 
with $N^+$ (resp. $N^-$) divisible only by primes which are split (resp. inert) in $K$, and choose an ideal $\mathfrak{N}^+\subset\cO_K$ with $\cO_K/\mathfrak{N}^+\iso\bZ/N^+\bZ$. 

%Although Theorem~\ref{thm:BD-theta} below is also available without this condition, 
We assume that $p\cO_K=\pp\overline{\pp}$ splits in $K$, with $\pp$ the prime of $K$ above $p$ induced by our fixed embedding $\iota_p:\overline{\bQ}\hookrightarrow\bC_p$. Let $\Gamma_\infty={\rm Gal}(K_\infty/K)$ be the Galois group of the anticyclotomic $\bZ_p$-extension of $K$, fix a topological generator $\gen\in\Gamma_\infty$, and identity $\cO\dBr{\Gamma_\infty}$ with the one-variable power series ring $\cO\dBr{T}$ via $\gen\mapsto 1+T$. For any prime-to-$p$ ideal $\mathfrak a$ of $K$, let $\sigma_{\mathfrak a}$ be the image of $\mathfrak a$ in the Galois group of the ray class field $K(p^\infty)/K$ of conductor $p^\infty$ under the geometrically normalized reciprocity law map.

\begin{thm}\label{thm:BD-theta}
Let $\chi$ be a ring class character of $K$ of conductor $c\cO_K$ with values in $\cO$, and
assume that: 
\begin{itemize}
	\item[(i)] {} $(pN_f,cD_K)=1$,
    \item[(ii)] {} $N^-$ is the square-free product of an odd number of primes,
    \item[(iii)] {} $\bar{\rho}_f$ is absolutely irreducible and $p$-distinguished, 
    \item[(iv)] if $q\vert N^-$ is a prime with $q\equiv 1\pmod{p}$, then $\bar{\rho}_f$ is ramified at $q$.
\end{itemize}
Then there exists a unique element $\Theta_{f/K,\chi}(T)\in\cO\pwseries{T}$ such that for every $p$-power root of unity $\zeta\in\overline{\bQ}_p$:
\[
\Theta^{}_{f/K,\chi}(\zeta-1)^2=\frac{p^n}{\alpha_p^{2n}}\cdot\mathcal{E}_p(f,\chi,\zeta)^{2}\cdot\frac{L(f^\circ/K\otimes\chi\epsilon_\zeta,1)}{(2\pi)^2\cdot\Omega_{f^\circ,N^-}}\cdot u_K^2\sqrt{D_K}\chi\epsilon_\zeta(\sigma_{\mathfrak{N}^+})\cdot\eps_p,
\]	
where:
\begin{itemize}
\item $n\geqslant 0$ is such that $\zeta$ has exact order $p^n$, 
\item $\epsilon_\zeta:\Gamma_\infty\rightarrow\mu_{p^\infty}$ be the character defined by $\epsilon_\zeta(\gen)=\zeta$,
\item 
$\mathcal{E}_p(f,\chi,\zeta)=
\left\{
\begin{array}{ll}
(1-\alpha_p^{-1}\chi(\pp))(1-\alpha_p\chi(\overline{\pp}))&
\textrm{if $n=0$,}\\ [0.5 em]
1&\textrm{if $n>0$,}
\end{array}
\right.$
\item 
$\Omega_{f^\circ,N^-}=4\cdot\lVert f^\circ\rVert_{\Gamma_0(N_{f^\circ})}^2\cdot\eta_{f^\circ,N^-}^{-1}$ is the Gross period of $f^\circ$,
\item $\sigma_{\mathfrak{N}^+}\in\Gamma_\infty$ is the image of $\mathfrak{N}^+$ under the geometrically normalized Artin's reciprocity map, 
\item $u_K=\vert\cO_K^\times\vert/2$, and
$\eps_p\in\{\pm 1\}$ is the local root number of $f^\circ$ at $p$. 
\end{itemize}
%Moreover, letting $\iota:\cO\pwseries{T}\to\cO\pwseries{T}$ be the involution given by $T\mapsto\frac{1}{1+T}-1$, we have the functional equation
%\[
%\Theta_{f/K,\chi^{-1}}(T)^\iota=\eps_p\cdot\eps
%\cdot\Theta_{f/K,\chi}(T)\cdot\sigma_{\mathfrak{N}^+}^{-1},
%\]
%where $\eps\in\{\pm{1}\}$ is the global root number of $f^\circ$. 
\end{thm}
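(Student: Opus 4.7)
The plan is to construct $\Theta_{f/K,\chi}$ as an inverse limit of normalized weighted sums of values of a quaternionic modular form at Gross points, and to deduce the interpolation formula from the Waldspurger/Gross special value formula in the definite quaternionic setting. Under hypotheses (i)--(iv), the global root number of $L(f^\circ/K\otimes\chi\epsilon_\zeta,s)$ at $s=1$ equals $+1$ for every $p$-power root of unity $\zeta$, with all local signs at primes dividing $N^-$ equal to $-1$. By Jacquet--Langlands, $f^\circ$ transfers to a weight-$2$ automorphic form $\phi^\circ$ on the definite quaternion algebra $B/\bQ$ of discriminant $N^-$, and $f$ corresponds to a $U_p$-eigenform $\phi$ of eigenvalue $\alpha_p$ on an Eichler order $\cO_B$ of level $N^+p$. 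Fix an optimal embedding $\cO_{K,c}\hookrightarrow\cO_B$ compatible with $\mathfrak{N}^+$, and for each $n\geqslant 0$ build a Gross point $x_n\in\wh{B}^\times$ at level $cp^{n+1}$ using a uniformizer at $\pp$. I would then define
\[
\wtd\Theta_n:=\alpha_p^{-n}\sum_{\sigma\in\Gamma_n}\chi(\tilde\sigma)\,\phi(x_n\cdot\rec(\tilde\sigma))\cdot\sigma\;\in\;\cO[\Gamma_n],
\]
where $\tilde\sigma$ is any lift of $\sigma$ to $\Gal(K(cp^{n+1})/K)$. The fact that $\phi$ is a $U_p$-eigenvector with eigenvalue $\alpha_p$, combined with the standard realization of $U_p$ as a sum over $\pp$-neighbours of Gross points, implies that $\{\wtd\Theta_n\}_n$ is norm-compatible under the projections $\cO[\Gamma_{n+1}]\twoheadrightarrow\cO[\Gamma_n]$, and hence defines $\Theta_{f/K,\chi}\in\cO\dBr{\Gamma_\infty}\iso\cO\pwseries{T}$. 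Uniqueness is then immediate from the density of finite-order characters of $\Gamma_\infty$.

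The interpolation formula comes from a toric period computation. For $\zeta$ of exact order $p^n$, write $\psi=\chi\epsilon_\zeta$. The product $\wtd\Theta_n(\zeta-1)\cdot\wtd\Theta_n(\zeta-1)^\iota$ expands into a double sum over $\Gamma_n\times\Gamma_n$ that equals, up to the normalization $\alpha_p^{-2n}$, the $\psi$-component of the Waldspurger toric period of $\phi$. The generalization of Gross's special value formula \cite{gross7} to ring class characters, carried out in \cite{ChHs1,hsieh-triple}, identifies this period, up to the $p$-local toric integral, with
\[
\frac{L(f^\circ/K\otimes\psi,1)}{(2\pi)^2\,\Omega_{f^\circ,N^-}}\cdot u_K^2\sqrt{D_K}\cdot\psi(\sigma_{\mathfrak{N}^+})\cdot\eps_p.
\]
Combining this with the normalization $\alpha_p^{-2n}$ and the $p$-local integral yields the claimed formula, provided the $p$-local integral is identified with $p^n\cdot\mathcal{E}_p(f,\chi,\zeta)^2$.

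The main obstacle is the computation of this $p$-local Waldspurger integral, in particular the appearance of the Euler factor $\mathcal{E}_p(f,\chi,\zeta)$ only at $n=0$. This rests on tracking the test vector $\phi$ at $p$ before and after $p$-stabilization: deep in the tower ($n\geqslant 1$), the Gross point $x_n$ at level $cp^{n+1}$ sits on the support of the $U_p$-eigencomponent and the local integral is a pure power of $p$ that is absorbed by the factor $\alpha_p^{-2n}p^n$, leaving no extra correction; at the base level ($n=0$), however, the two cosets representing $\pp$-neighbours of the Gross point contribute the factors $(1-\alpha_p^{-1}\chi(\pp))(1-\alpha_p\chi(\overline{\pp}))$ characteristic of $p$-stabilization. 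The explicit local integrals computed in \cite[\S4--5]{hsieh-triple} (specialized to weight $2$ as in \cite{ChHs1}) carry out this verification in the precise form required, and simultaneously pin down the global constants $u_K^2\sqrt{D_K}$, $\psi(\sigma_{\mathfrak{N}^+})$, and the sign $\eps_p$ at $p$.
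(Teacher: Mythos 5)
Your proposal is correct and follows essentially the same approach as the paper, which simply delegates the proof to \cite{BDmumford-tate} for the construction and \cite[Thm.~A]{ChHs1} for the interpolation property: you have unpacked what those references do (Jacquet--Langlands transfer to the definite quaternion algebra of discriminant $N^-$, Gross points indexed by the anticyclotomic tower, norm-compatibility via the $U_p$-eigenvector property, and the Waldspurger/Gross special value formula together with the explicit $p$-local toric integrals of Chida--Hsieh giving the Euler factor $\mathcal{E}_p$ and the factor $p^n/\alpha_p^{2n}$). The one small point worth flagging is that your formula for $\wtd\Theta_n$ should be phrased more carefully so the sum is visibly independent of the choice of lift $\tilde\sigma\in\Gal(K(cp^{n+1})/K)$ of $\sigma\in\Gamma_n$ — in the standard treatment one sums over $\Pic(\cO_{cp^{n+1}})$ and then projects onto the $\chi$-isotypic component — but this is a notational matter handled in the references rather than a genuine gap.
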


\begin{proof}
See \cite{BDmumford-tate} for the first construction, and  
%(resp. Theorem~B) 
\cite[Thm.~A]{ChHs1} for the stated interpolation property. 
%(resp. functional equation). 
\end{proof}

When $\chi$ is the trivial character, we write $\Theta_{f/K,\chi}(T)$ simply as $\Theta_{f/K}(T)$. Suppose now that $f$  
is the specialization of a primitive Hida family $\bff\in S^o(N_f,\cR)$ with branch character $\chi_{\bff}=\mathds{1}$ at an arithmetic point $x_1\in\mathfrak{X}_\cR^+$ of weight $2$.  
%Let $\psi$ be a geometric $p$-adic character of $G_K$ of weight $(1,0)$ and conductor $\pp^\infty$, 
Let $\ell\nmid pN_f$ be a prime split in $K$, and let $\chi$ be a ring class character of $K$ of conductor $\ell^m\cO_K$ for some even $m>0$. Set $C=D_K\ell^{2m}$ and let
\[
\bfg=\boldsymbol{\theta}_{\chi}(S_2)\in S^o(C,\omega^{-1}\eta_{K/\bQ},\cO\dBr{S_2}),\quad
\bfh=\boldsymbol{\theta}_{\chi^{-1}}(S_3)\in S^o(C,\omega^{-1}\eta_{K/\bQ},\cO\dBr{S_3})
\]  
be the primitive CM Hida families constructed in \cite[\S{8.3}]{hsieh-triple}, where $\eta_{K/\bQ}$ is the quadratic character associated to $K$.  
The $p$-adic triple product $L$-function of Theorem~\ref{thm:triple-L} for this triple $(\bff,\bfg,\bfh)$ is an element in $\mathcal{R}=\cR\dBr{S_2,S_3}$; in the following we let 
\[
\mathscr{L}_p^f(\breve{f}^\star,{\breve{\bfg}^\star\breve{\bfh}^\star})\in\cO\dBr{S}
\] 
denote the restriction to the ``line'' $S=S_2=S_3$ of its image under the specialization map at $x_1$. 

Let $\mathbb{K}_\infty$ 
be the $\Z_p^2$-extension of $K$, and let $K_{\pp^\infty}$ denote the $\pp$-ramified $\Z_p$-extension in $\mathbb{K}_\infty$, with Galois group $\Gamma_{\pp^\infty}={\rm Gal}(K_{\pp^\infty}/K)$. Let $\gamma_\pp\in\Gamma_{\pp^\infty}$ be a topological generator, and for the formal  variable $T$ let $\Psi_T:{\rm Gal}(\mathbb{K}_\infty/K)\rightarrow\cO\dBr{T}^\times$ be the universal character defined by
\begin{equation}\label{eq:univ}
\Psi_T(\sigma)=(1+T)^{l(\sigma)},\quad\textrm{where %$l(\sigma)\in\bZ_p$ is such that
	$\sigma\vert_{K_{\pp^\infty}}=\gamma_\pp^{l(\sigma)}$}.
\end{equation}
Denoting by the superscript $c$ the action of the non-trivial automorphism of $K/\bQ$, the character $\Psi_T^{1-c}$ factors through $\Gamma_\infty$ and yields an identification $\cO\dBr{\Gamma_\infty}\iso\cO\dBr{T}$ corresponding to the topological generator $\gamma_\pp^{1-c}\in\Gamma_\infty$. Let $p^b$ be the order of the $p$-part of the class number of $K$. Hereafter, we shall fix $\mathbf v\in\Zbar_p^\times$ such that $\mathbf v^{p^b}=\varepsilon_{\rm cyc}(\gamma_\frakp^{p^b})\in 1+p\Zp$. %Then $\bfg$ has the Fourier expansion 
%\[\bfg(q)=\sum_{(\fraka,\frakp\ell)=1}\chi(\sigma_\fraka)\Psi_{\mathbf v^{-1}(S+1)-1}(\sg_\fraka)^{-1}q^{\norm{\fraka}}.\]
%Then 
%\[\boldsymbol\theta_{\psi\chi}:=\sum_{\fraka\nmid \ell\frakp}\chi(\sigma_{\fraka})\Psi_S^{-1}(\sigma_{\fraka})q^{\norm{\fraka}}.\]
Let $K(\chi,\al_p)/K$ (resp. $K(\chi)/K$) be the finite extension obtained by adjoining to $K$ the values of $\chi$ and $\al_p$ (resp. the values of $\chi$). 

\begin{prop}\label{prop:factor}
Set $T=\mathbf{v}^{-1}(1+S)-1$. Then 
\[
\mathscr{L}_p^f(\breve{f}^\star,{\breve{\bfg}^\star\breve{\bfh}^\star})=
\pm \Psi_T^{c-1}(\sigma_{\mathfrak{N}^+})\cdot\Theta_{f/K}(T)\cdot C_{f,\chi}\cdot \sqrt{L^{\rm alg}(f/K\ot\chi^2,1)},
\]
where $C_{f,\chi}\in K(\chi,\al_p)^\x$and  \[L^{\rm alg}(f/K\ot\chi^2,1):=\frac{L(f/K\otimes\chi^2,1)}{\pi^2\Omega_{f^\circ,N^-}}\in K(\chi).\] %$\mathbf{w}$ is the unit in $\cO\dBr{T}]^\times$ given by
%\[
%\mathbf{w}:=u_K^2\sqrt{D_K}\ell^{m/2}\psi(\sigma_{\mathfrak{N}^+})\Psi_T(\sigma_{\mathfrak{N}^+}^{1-c}), 
%\]
%\[
%C_{f,\chi}:=(1-\alpha_f^{-1}\chi^2(\pp))(1-\alpha_f\chi^{2}(\overline\pp))\cdot\frac{u_K^2\sqrt{D_K}\chi^2(\sigma_{\mathfrak{N}^+})\cdot\varepsilon_p(f^\circ)}{4(2\pi)^2\cdot\lVert f^\circ\rVert_{\Gamma_0(N_{})}^2}\cdot\eta_{f^\circ}
%\]
\end{prop}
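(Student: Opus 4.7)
The strategy is to establish the identity by comparing the interpolation properties of both sides at a dense set of classical points $T=\zeta-1$, for $\zeta$ running over $p$-power roots of unity. Both sides are \emph{a priori} elements of $\mathcal{O}[T]\otimes K(\chi,\alpha_p)$ (the right-hand side carries $\sqrt{L^{\rm alg}(f/K\otimes\chi^2,1)}\in K(\chi,\alpha_p)^\times$ as a constant factor, since $L(f/K\otimes\chi^2,1)$ is nonzero under our hypotheses by assumption), so it suffices to square both sides and match their interpolation formulas coefficient-by-coefficient modulo the unit $C_{f,\chi}$.

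First, I would restrict $\mathscr{L}_p^f(\breve{\bff}^\star,\breve{\bfg}^\star,\breve{\bfh}^\star)$ to the anticyclotomic line $S_2=S_3=S$ and evaluate at arithmetic specializations of the form $(x_1,y_\zeta,z_\zeta)$, where the pair $(y_\zeta,z_\zeta)$ corresponds (via the CM construction of $\bfg=\boldsymbol{\theta}_\chi(S_2)$ and $\bfh=\boldsymbol{\theta}_{\chi^{-1}}(S_3)$ in \cite[\S{8.3}]{hsieh-triple}) to the pair of CM theta series $(\theta_{\chi\lambda_T},\theta_{\chi^{-1}\lambda_T})$ attached to the anticyclotomic Hecke character $\lambda_T$ whose square is the finite-order character $\epsilon_\zeta\colon\Gamma_\infty\to\mu_{p^\infty}$ determined by $T=\zeta-1$. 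The normalization $T=\mathbf{v}^{-1}(1+S)-1$ is chosen precisely so that this square-root extraction is compatible with the fixed topological generator $\gen$ and the $p$-adic period $\mathbf v$ pinned down earlier. Squaring Theorem~\ref{thm:triple-L} then expresses $\nu_{(x_1,y_\zeta,z_\zeta)}(\mathscr{L}_p^f)^2$ in terms of the central $L$-value $L(f^\circ\otimes\theta_{\chi\lambda_T}^\circ\otimes\theta_{\chi^{-1}\lambda_T}^\circ,1)$ multiplied by explicit $p$-Euler factors, $\Gamma$-factors, and the Petersson norm $\lVert f^\circ\rVert^2$.

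Second, I would apply the Artin formalism factorization
$$V_f\otimes V_{\theta_{\chi\lambda_T}}\otimes V_{\theta_{\chi^{-1}\lambda_T}}\simeq\bigl(V_f\otimes\mathrm{Ind}_K^{\bQ}\lambda_T^2\bigr)\oplus\bigl(V_f\otimes\mathrm{Ind}_K^{\bQ}\chi^2\bigr),$$
which uses $\chi^c=\chi^{-1}$, $\lambda_T^c=\lambda_T^{-1}$ (both because $\chi$ is a ring class character and $\lambda_T$ is anticyclotomic), to get the classical $L$-factorization
$$L(f^\circ\otimes\theta_{\chi\lambda_T}^\circ\otimes\theta_{\chi^{-1}\lambda_T}^\circ,1)=L(f^\circ/K\otimes\epsilon_\zeta,1)\cdot L(f^\circ/K\otimes\chi^2,1),$$
since $\lambda_T^2=\epsilon_\zeta$. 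The first factor is what Theorem~\ref{thm:BD-theta} interpolates: $\Theta_{f/K}(\zeta-1)^2$ equals that $L$-value divided by $(2\pi)^2\Omega_{f^\circ,N^-}$ up to the explicit Euler factor $\mathcal{E}_p(f,\mathds{1},\zeta)^2$ and the elementary constants ($p^n/\alpha_p^{2n}$, $u_K^2\sqrt{D_K}$, $\epsilon_\zeta(\sigma_{\mathfrak{N}^+})$, $\eps_p$). The second factor $L(f^\circ/K\otimes\chi^2,1)$ is independent of $\zeta$ and, by definition of $L^{\rm alg}$, equals $\pi^2\Omega_{f^\circ,N^-}\cdot L^{\rm alg}(f/K\otimes\chi^2,1)$.

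Third — and this will be the main obstacle — I would have to verify that after taking square roots all the auxiliary constants (the Gross periods $\Omega_{f^\circ,N^-}$ and Petersson norms appearing in the two interpolations, the $\Gamma$-factor $\Gamma(2,1,1)$, the $2^{\alpha(2,1,1)}$ factor, the local fudge factors $\prod_{q\mid N}c_q$, the local Euler factors $\mathcal{E}_0(f)^{-1}\mathcal{E}_1(f)^{-1}\mathcal{E}(f,\theta_{\chi\lambda_T},\theta_{\chi^{-1}\lambda_T})$ on the triple-product side versus $p^n\alpha_p^{-2n}\mathcal{E}_p(f,\mathds{1},\zeta)^2$ on the Bertolini--Darmon side) package into a single $\zeta$-independent constant $C_{f,\chi}\in K(\chi,\alpha_p)^\times$, together with the anticyclotomic twist $\Psi_T^{c-1}(\sigma_{\mathfrak{N}^+})$ that arises from the choice of $\Lambda$-adic test vectors (its $\zeta$-dependence matching the $\epsilon_\zeta(\sigma_{\mathfrak{N}^+})$ appearing in Theorem~\ref{thm:BD-theta} via $\Psi_T^{c-1}|_{T=\zeta-1}=\epsilon_\zeta^{-1}$ on $\Gamma_\infty$, times an unramified-at-$p$ constant absorbed into $C_{f,\chi}$). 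The detailed bookkeeping of Euler factors at primes dividing $p$ — which requires expanding $\mathcal{E}(f,\theta_{\chi\lambda_T},\theta_{\chi^{-1}\lambda_T})$ as a product of four binomials in $\alpha_p,\beta_p$ and the roots $\chi(\pp),\chi(\ol{\pp})$, and recognizing it as $\mathcal{E}_p(f,\mathds{1},\zeta)^2$ times the Euler factor at $p$ of $L(f/K\otimes\chi^2,s)$ — is the technical heart. Once the equality of interpolated values is established on the dense subset $\{T=\zeta-1\}\subset\Spec\cO\dBr{T}$, the identity of power series follows, and the sign ambiguity $\pm$ reflects the choice of square root of $L^{\rm alg}(f/K\otimes\chi^2,1)$. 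This is precisely the computation carried out in \cite[\S{8}]{hsieh-triple}, from which the statement follows.
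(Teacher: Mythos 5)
Your proposal is correct and reconstructs, essentially from scratch, the interpolation-and-factorization argument that the paper invokes in a single line by citing \cite[Prop.~8.1]{hsieh-triple}. The only procedural difference is that the paper first expresses the $\chi^2$-constituent of the factorization as the theta element $\Theta_{f/K,\chi^2}(T)$ evaluated at $\zeta=1$ (which Theorem~\ref{thm:BD-theta} then converts into $\pm\mathcal{E}_p(f,\chi^2,1)\sqrt{L^{\rm alg}(f/K\otimes\chi^2,1)}$ up to explicit constants), whereas you bypass this intermediate object and match the classical $L$-value directly; both routes are equivalent and your account of the Artin-formalism decomposition, the role of the anticyclotomic normalization $T=\mathbf v^{-1}(1+S)-1$, and the $\zeta$-independence of the fudge constants is accurate, modulo the minor point that the cyclotomic part $\psi_x^{-(1+c)}$ appearing in the tensor square of the CM characters is absorbed into the critical twist $\mathcal{X}^{-1}$ rather than being trivial as your $\lambda_T^c=\lambda_T^{-1}$ shorthand suggests.
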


\begin{proof} 	
This is the factorization formula of \cite[Prop.~8.1]{hsieh-triple} specialized to $S=S_2=S_3$, using the interpolation property %of Theorem~\ref{thm:BD-theta} 
of $\Theta_{f/K,\chi^2}(T)$ at $\zeta=1$.	
\end{proof}

\begin{rem}
The factorization of Proposition~\ref{prop:factor} reflects the decomposition of Galois representations
\begin{equation}\label{eq:dec}
\mathbb{V}_{f,\bfg\bfh}^\dagger=\bigl(V_f(1)\ot{\rm Ind}_K^\Q\Psi_T^{1-c}\bigr)\oplus
\bigl(V_f(1)\ot{\rm Ind}_K^\Q\chi^2\bigr).
\end{equation}
\end{rem}

%\subsection{$p$-adic families of diagonal cycles}\label{subsec:diag}

%Let $\Gamma_\infty$ (resp. $\Gamma_{\pp^\infty}={\rm Gal}(K_{\pp^\infty}/K)$) be the Galois group of the unique $\bZ_p^2$-extension (resp. $\pp$-ramified $\bZ_p$-extension) of $K$. 

%\subsection{Explicit reciprocity law}\label{subsec:ERL}
\subsection{Euler system construction of theta elements}

%Let $f\in S_2(N_f)$ be a $p$-ordinary newform of level $N_f$ and trivial nebentypus, and let $\bfg$, $\bfh$ be ordinary $\Lambda$-adic newforms of tame levels $N_g$, $N_h$ and tame characters $\chi$, $\chi^{-1}$, respectively. 

%From now on, assume that the prime $p$ is such that:
%\begin{itemize}
%	\item $p\nmid 6N$, where $N:={\rm lcm}(N_f,N_{\bfg})$,
%	\item the residual representations $\bar{\rho}_{f}$, $\bar{\rho}_{\bfg}$, %$\bar{\rho}_{\bfh}$ are irreducible and $p$-distinguished.
%\end{itemize}
%Then the quotients $V_{\boldsymbol{\phi}}^-$ are also free of rank one over $\cR_{\boldsymbol{\phi}}$. 

For the rest of the paper, assume that $f$, $\bfg=\boldsymbol{\theta}_{\chi}(S)$, and $\bfh=\boldsymbol{\theta}_{\chi^{-1}}(S)$ are as in $\S\ref{subsec:theta}$, viewing the latter two in $S^o(C,\omega^{-1}\eta_{K/\bQ},\cO\dBr{S})$. Keeping the notations from $\S\ref{subsec:Gal}$, by \cite[\S{1}]{DR2.5} %(see esp. [\emph{loc.cit.}, Def.~1.15]) 
there exists a class
\begin{equation}\label{eq:1.15}
\kappa(f,{\bfg\bfh})\in\rmH^1(\bQ,\bV^\da_{f,{\bfg\bfh}}(N))
\end{equation}
%with 
%\begin{equation}\label{eq:p-fin}
%{\rm res}_p(\kappa(f,{\bfg\bfh}))\in {\rm im}\left\{\rmH^1(\bQ_p,\mathscr{F}^2\bV^\da_{f,{\bfg\bfh}}(N))\rightarrow\rmH^1(\bQ_p,\bV^\da_{f,{\bfg\bfh}}(N))\right\} 
%\end{equation}
constructed from twisted diagonal cycles on the triple product of modular curves of tame level $N$ (we shall briefly recall the construction of this class in Theorem~\ref{thm:DR-erl} below), where we may take $N={\rm lcm}(N_f,C)$. 

Every triple of test vectors $\breve{\boldsymbol{F}}=(\breve{f},\breve{\bfg},\breve{\bfh})$ defines a Galois-equivariant projection 
\[
{\rm pr}_{\breve{\boldsymbol{F}}}:\rmH^1(\Q,\bV^\dagger_{f,{\bfg\bfh}}(N))\to \rmH^1(\Q,\bV^\dagger_{f,{\bfg\bfh}})
\] 
and we let 
\begin{equation}\label{eq:prF}
\kappa(\breve{f},{\breve{\bfg}\breve{\bfh}}):={\rm pr}_{\breve{\boldsymbol{F}}}(\kappa(f,{\bfg\bfh}))\in\rmH^1(\Q,\bV^\dagger_{f,{\bfg\bfh}}).
\end{equation} 
%be the image of $(\ref{eq:1.15})$ under ${\rm pr}_{\breve{\boldsymbol{F}}}$. 
Since $\Psi_T^{1-c}$ gives the universal character of $\Gamma_\infty={\rm Gal}(K_\infty/K)$, by $(\ref{eq:dec})$ and Shapiro's lemma we have the equalities 
\begin{equation}\label{eq:dec-cohom}
\begin{split}
\rmH^1(\Q,\bV_{f,\bfg\bfh}^\dagger)&=\rmH^1(\Q,V_f(1)\ot{\rm Ind}_K^\Q\Psi_T^{1-c})\oplus\rmH^1(\Q,V_f(1)\ot\Ind_K^\Q\chi^2)\\
&=\widehat{\rmH}^1(K_\infty,V_f(1))\oplus\rmH^1(K,V_f(1)\ot\chi^2).
\end{split}
\end{equation}
Let $g$ and $h$ be the weight $1$ eigenform $\theta_\chi$ and $\theta_{\chi^{-1}}$, respectively, so that the specialization of $(\bfg,\bfh)$ at $T=0$ ($\Leftrightarrow S=\mathbf v-1$) is a $p$-stabilization of the pair $(g,h)$.

\begin{lem}\label{lem:ERL}
Assume that $L(f\ot g\ot h,1)=0$ and that $L(f/K\ot\chi^2,1)\neq 0$. Then for every choice of test vectors $\breve{\boldsymbol{F}}=(\breve{f},\breve{\bfg},\breve{\bfh})$ we have:
\begin{enumerate}
 \item $\kappa(\breve{f},\breve{\bfg}\breve{\bfh})\in \widehat{\rmH}^1(K_\infty,V_f(1))$.
 \item $\loc_{\overline\pp}(\kappa(\breve{f},\breve{\bfg}\breve{\bfh}))=0\in\widehat{\rmH}^1(K_{\infty,\overline\pp},V_f(1))$.
 \end{enumerate}
\end{lem}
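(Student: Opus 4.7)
The plan is to analyze the class via the direct sum decomposition $(\ref{eq:dec-cohom})$, handling the two parts by distinct techniques.

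For part (1), the plan is to project $\kappa(\breve f,\breve{\bfg}\breve{\bfh})$ onto the second summand $\rmH^1(K,V_f(1)\otimes\chi^2)$ and show the projection vanishes. Because this summand carries no Iwasawa-variable dependence, the projection coincides with that of any specialization in $T$, in particular at $T=0$. There $\Psi_T^{1-c}$ trivializes and $(\bfg,\bfh)|_{T=0}$ is an ordinary $p$-stabilization of $(g,h)=(\theta_\chi,\theta_{\chi^{-1}})$, so $\kappa(\breve f,\breve{\bfg}\breve{\bfh})|_{T=0}$ is, up to the critical Tate twist, the test-vector image of the classical generalized Kato class $\kappa(f,g_\alpha,h_{\alpha^{-1}})$. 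By the Darmon--Rotger explicit reciprocity law $(\ref{eq:DR-ERL})$ combined with the hypothesis $L(f\otimes g\otimes h,1)=0$, this class lies in the Bloch--Kato Selmer group $\Sel(\Q,V_{fgh})$; its projection to the $\Ind_K^\Q\chi^2$-summand of $(\ref{eq:triple})$ therefore lies in $\Sel(\Q,V_f(1)\otimes\Ind_K^\Q\chi^2)=\Sel(K,V_f(1)\otimes\chi^2)$. The assumption $L(f/K\otimes\chi^2,1)\neq 0$ combined with the running hypotheses on $\bar\rho_f$ (via rank-zero anticyclotomic Iwasawa theory applied to the theta element $\Theta_{f/K,\chi^2}$ of Theorem~\ref{thm:BD-theta}, in the style of Bertolini--Darmon and Chida--Hsieh) yields $\Sel(K,V_f(1)\otimes\chi^2)=0$, so the projection vanishes and (1) follows.

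For part (2), with the class now in $\widehat\rmH^1(K_\infty,V_f(1))$, the plan is to show that its restriction at $\overline\pp$ is trivial. The key input is that $\kappa(f,\bfg\bfh)$ is the $\Lambda$-adic \'etale Abel--Jacobi image of Gross--Kudla--Schoen diagonal cycles in the $f$-dominated unbalanced range, so its local restriction at $p$ lies in the Panchishkin subspace $\rmH^1(\Qp,V_f^+(1)\otimes V_\bfg\widehat\otimes V_\bfh)$. The CM structure of $\bfg=\theta_\chi(S)$ and $\bfh=\theta_{\chi^{-1}}(S)$ induces a decomposition of $V_\bfg\otimes V_\bfh$ over $G_K$ compatible with the $G_\Q$-decomposition $(\ref{eq:dec})$; under this, the projection to the first summand localized at $\overline\pp$ is controlled by $V_f^+(1)\otimes\Psi_T^{1-c}$ as a $G_{K_{\overline\pp}}$-module. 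A direct computation, using the ordinary filtration of $V_f$ at $p$, the explicit $G_{K_{\overline\pp}}$-structure of $\Psi_T^{1-c}$ arising from how the anticyclotomic character unravels at the conjugate prime, and the fact that the Hida-theoretic diagonal cycle class satisfies a strict (rather than merely Greenberg-ordinary) local condition at $\overline\pp$, then forces the local $\overline\pp$-class to vanish in $\widehat\rmH^1(K_{\infty,\overline\pp},V_f(1))$.

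The main obstacle is part (2): whereas (1) reduces cleanly to a rank-zero Selmer vanishing via specialization at $T=0$, part (2) demands a genuine $\Lambda$-adic analysis at the conjugate prime $\overline\pp$, reconciling the Hida-theoretic construction of the diagonal cycle class, the CM splitting of $\bfg\bfh$, and Perrin-Riou-type ordinary/strict local conditions. The decisive point is that the strict local condition at $\overline\pp$ is automatically satisfied by diagonal cycles in the $f$-unbalanced range, a compatibility that should ultimately follow from a refinement of the Darmon--Rotger reciprocity law at the conjugate prime $\overline\pp$.
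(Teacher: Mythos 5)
Your treatment of part (1) matches the paper's argument in essence: project $\bfkappa$ to the $\chi^2$-summand of $(\ref{eq:dec-cohom})$, observe that this summand does not vary with $T$ so it suffices to check the $T=0$ specialization, invoke the explicit reciprocity law of Darmon--Rotger (together with $L(f\otimes g\otimes h,1)=0$) to place $\kappa(f,g_\alpha,h_{\alpha^{-1}})$ in $\Sel(\bQ,V_{fgh})$, and kill the $\chi^2$-projection via the rank-zero result $\Sel(K,V_f(1)\otimes\chi^2)=0$ coming from $L(f/K\otimes\chi^2,1)\neq 0$. The paper cites \cite{ChHs2} for this last vanishing, which is the ``anticyclotomic rank-zero Iwasawa theory'' you allude to, so this part is fine.

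Part (2) contains a genuine gap, and a wrong ingredient. You assert that the local class at $p$ lies in the ``Panchishkin subspace $\rmH^1(\Qp,V_f^+(1)\otimes V_\bfg\widehat\otimes V_\bfh)$'' because the construction takes place in the $f$-dominated unbalanced range. That is not correct: the diagonal-cycle class is an Abel--Jacobi image of a cycle and hence lands in the \emph{balanced} local subspace
\[
F^+\mathbb{V}_{f\bfg\bfh}^\dagger=\bigl(F^0V_f(1)\ot F^0V_{\bfg}\ot V_{\bfh}+F^0V_f(1)\ot V_{\bfg}\ot F^0V_{\bfh}+V_f(1)\ot F^0V_{\bfg}\ot F^0V_{\bfh}\bigr)\ot\cX^{-1}
\]
from \cite[Cor.~2.3]{DR2}, which is a rank-four subspace \emph{different} from $V_f^+(1)\otimes V_\bfg\widehat\otimes V_\bfh$. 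The $f$-unbalanced range is where the triple-product $p$-adic $L$-function is interpolated, not the regime that determines the class's local condition. With your proposed ``$f$-ordinary'' condition, after removing the $\chi^{\pm 2}$ pieces via part (1), the local class would merely be forced into $\rmH^1(\Qp,V_f^+(1)\ot(\Psi_T^{1-c}\oplus\Psi_T^{c-1}))$, which says nothing about the $\overline\pp$-component beyond ordinariness; it certainly does not force it to vanish. Your subsequent appeal to ``a strict local condition at $\overline\pp$ automatically satisfied by diagonal cycles'' is exactly the assertion that needs proof, and is asserted rather than argued.

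What actually closes the gap is the shape of the balanced $F^+$ in the CM case: when $(\bfg,\bfh)=(\boldsymbol{\theta}_{\chi},\boldsymbol{\theta}_{\chi^{-1}})$, one computes
\[
F^+\mathbb{V}_{f\bfg\bfh}^\dagger=V_f(1)\ot \Psi_T^{1-c}+F^0V_f(1)\ot(\chi^{2}\oplus\chi^{-2}),
\]
with $\Psi_T$ viewed as a $G_{\Qp}$-character via the embedding induced by $\pp$. Crucially there is \emph{no} $V_f(1)\ot\Psi_T^{c-1}$ piece at all. After part (1) kills the $\chi^{\pm 2}$-component, the local class therefore lies in $\rmH^1(K_\pp,V_f(1)\ot\Psi_T^{1-c})\oplus\{0\}$ inside $\rmH^1(\Qp,V_f(1)\ot\Ind_K^\Q\Psi_T^{1-c})$, which is precisely the vanishing $\loc_{\overline\pp}(\bfkappa)=0$. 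So the mechanism is not an extra ``strictness'' property of diagonal cycles, but the interaction between the balanced local condition and the CM splitting, and your proposal both invokes the wrong filtration and leaves the decisive computation unperformed.
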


\begin{proof} Let $\bfkappa=\kappa(\breve{f},\breve{\bfg}\breve{\bfh})$ and for every $?\in \stt{f,\bfg,\bfh}$, let $F^0V_?$ be the rank one subspace of $V_?$ fixed by the inertia group at $p$. By (\ref{eq:dec-cohom}), in order to prove $(1)$ it suffices to show that some specialization of $\bf\kappa$ has trivial image in $\rmH^1(K,V_f(1)\ot\chi^2)$. Let 
\[
\kappa_{\breve{f},\breve{g}\breve{h}}:=\bfkappa\vert_{S=\mathbf{v}-1}\in\rmH^1(\Q,V_{fgh})=\rmH^1(K,V_f(1))\oplus\rmH^1(K,V_f(1)\ot\chi^2),
\]	
where $V_{fgh}:=V_f(1)\ot V_g\ot V_h$. As noted in \cite[p.~634]{DR2}, the Selmer group ${\rm Sel}(\bQ,V_{fgh})\subset\rmH^1(\Q,V_{fgh})$ is given by 
\[
{\rm Sel}(\bQ,V_{fgh})={\rm ker}\biggl( \rmH^1(\bQ,V_{fgh})\overset{\partial_p\circ{\rm loc}_p}\rightarrow \rmH^1(\bQ_p,V_f^-(1)\otimes V_{g}\ot V_h)\biggr),
\]
where $\partial_p$ %:\rmH^1(\bQ_p,V_{fgh})\rightarrow\rmH^1(\bQ_p,V_f^-(1)\otimes V_g\ot V_h)$ 
is the natural map induced by the projection $V_f\twoheadrightarrow V_f^-:=V_f/F^0V_f$, and so
\begin{equation}\label{eq:Sel-dec}
{\rm Sel}(\bQ,V_{fgh})={\rm Sel}(K,V_f(1))\oplus{\rm Sel}(K,V_f(1)\ot\chi^2).
\end{equation}
The implications 
$L(f\ot g\ot h,1)=0\Rightarrow\kappa_{\breve{f},\breve{g}\breve{h}}\in{\rm Sel}(\bQ,V_{fgh})$ and $L(f/K\ot\chi^2,1)\neq 0\Rightarrow{\rm Sel}(K,V_f(1)\ot\chi^2)=0$, which follow from \cite[Thm.~C]{DR2} and %\cite[Cor.~4]{bdIMC} (see also 
\cite[Thm.~1]{ChHs2}, respectively, thus yield assertion (1).

We proceed to prove (2). We know that the local class $\loc_p(\bfkappa)$ belongs to $\rmH^1(\Qp,F^+\mathbb{V}_{f\bfg\bfh}^\dagger)$, where
\begin{align*}
&F^+\mathbb{V}_{f\bfg\bfh}^\dagger:=\left(F^0V_f(1)\ot F^0V_{\bfg}\ot V_{\bfh}+F^0V_f(1)\ot V_{\bfg}\ot F^0V_{\bfh}+V_f(1)\ot F^0V_{\bfg}\ot F^0V_{\bfh}\right)\ot\cX^{-1}\end{align*}
is a rank four subspace of $\mathbb{V}_{f\bfg\bfh}^\dagger$ (see \cite[Cor.~2.3]{DR2}). In our case where $(\bfg,\bfh)=(\boldsymbol{\theta}_{\chi},\boldsymbol{\theta}_{\chi^{-1}})$, we have
\[F^+\mathbb{V}_{f\bfg\bfh}^\dagger=V_f(1)\ot \Psi_T^{1-c}+F^0V_f(1)\ot(\chi^{2}\oplus\chi^{-2}),\]
where $\Psi_T$ is viewed as a character of $G_{\Qp}$ via the embedding $K\hookto \Qp$ induced by $\frakp$. From part (1) of the lemma, it follows that 
%For any $x\in\mathfrak{X}_{\cR}^+$ of weight $k_x\geqslant 2$, we have $\bfg_x=\theta_{\chi\psi_x^{-1}}$ and $\bfh_x=\theta_{\chi^{-1}\psi_x^{-1}}$, where $\psi_x:=\Psi_T|_{T=x}$ is a $p$-adic character satisfying $\psi_x(\rec_{K_p}(z_\frakp,z_{\bar\frakp}))=z_{\frakp}^{k_x}$ for $(z_\frakp,z_{\bar\frakp})\in \cO_{K_\frakp}^\times\oplus \cO_{K_{\bar\frakp}}^\times$. Then we have
%\[F^0V_{\bfg_x}=\Qp \chi^c\psi_x^{-c};\quad F^0V_{\bfh_x}=\Qp\chi^{-c}\psi_x^{-c},\] and
%\[
%F^0\mathbb{V}_{f\bfg_x\bfh_x}^\dagger=V_f(1)\ot\psi_x^{1-c}+F^0V_f(1)\ot \chi^{1-c}+F^0V_f(1)\ot \chi^{1-c}. \]
%Since as seen above we have $\kappa(\breve{f},\breve{\bfg}\breve{\bfh})_x\in \rmH^1(\Q,V_f(1)\ot\Ind_K^\Q\psi_x^{1-c})\oplus \rmH^1_{\rm fin}(K,V_f(1)\ot\chi^{1-c})$ with $\rmH^1_{\rm fin}(K,V_f(1)\ot\chi^{1-c})=\stt{0}$, setting and letting $x$ vary this implies that 
\begin{align*}\loc_p(\bfkappa)&=(\loc_{\frakp}(\bfkappa),\loc_{\frakp}(\overline{\bfkappa}))\in \rmH^1(K_\frakp,V_f(1)\ot \Psi_T^{1-c})\oplus\stt{0}\\
&\subset \rmH^1(K_\frakp,V_f(1)\ot \Psi_T^{1-c})\oplus  \rmH^1(K_\frakp,V_f(1)\ot\Psi_T^{c-1})=\rmH^1(\Qp,V_f(1)\ot\Ind_K^\Q\Psi_T^{1-c}).
%&\stackrel{(1,\Ad(c))}\iso\wh\rmH^1(K_{\infty,\frakp},V_f(1))\oplus\wh\rmH^1(K_{\infty,\bar\frakp},V_f(1)), 
\end{align*}
We thus conclude that $\loc_{\frakp}(\ol{\bfkappa})=0$, and hence $\loc_{\ol{\frakp}}(\bfkappa)=0$. 
\end{proof}

From now on, assume that $f^\circ\in S_2(N_f)$ is the newform corresponding to an elliptic curve $E/\bQ$ with good ordinary reduction at $p$. In particular, $V_f(1)\iso V_pE$, and under the conditions in Lemma~\ref{lem:ERL}
we have the class $\kappa(\breve{f},\breve{\bfg}\breve{\bfh})\in\widehat{\rmH}^1(K_\infty,V_pE\ot L)$. 

%Recall that we assume $p=\pp\overline{\pp}$ splits in $K$ and let $F=H_{\mathfrak{P}}$ be the completion of the Hilbert class field of $K$ at a prime $\mathfrak{P}$ above $\pp$. 
%If $p$ does not divide the class number of $K$, then $\pp$ is totally ramified in $K_\infty^-/K$, and by the discussion in $\S\ref{subsec:applic}$ the restriction
%\[
%\kappa_\pp(\breve{f},\breve{\bfg}\breve{\bfh}):={\rm res}_\pp(\kappa(\breve{f},\breve{\bfg}\breve{\bfh}))\in\widehat{\rmH}^1(F_\infty,V_pE)
%\]
%defines a class over the Lubin--Tate $\Z_p$-extension %$F_\infty/F=K_{\infty,\pp}^-/K_\pp$. 

The following key theorem is a variant of the ``explicit reciprocity law'' of \cite[Thm.~5.3]{DR2} in our setting in terms of the Coleman map constructed in $\ref{subsec:Col}$.

\begin{thm}[Darmon--Rotger]
\label{thm:DR-erl}
%There exists a class 
%$\kappa(f,{\bfg\bfh})\in\rmH^1(\bQ,\bV^\da_{f,{\bfg\bfh}}(N))$ with 
%\begin{equation}\label{eq:p-fin}
%{\rm res}_p(\kappa(f,{\bfg\bfh}))\in {\rm im}\left\{\rmH^1(\bQ_p,\mathscr{F}^2\bV^\da_{f,{\bfg\bfh}}(N))\rightarrow\rmH^1(\bQ_p,\bV^\da_{f,{\bfg\bfh}}(N))\right\} 
%\end{equation}
%such that letting $\kappa_p^f(f,{\bfg\bfh})\in %\rmH^1(\bQ_p,\bV^f_{f,{\bfg\bfh}}(N))$ be the image of ${\rm res}_p(\kappa_p(f,{\bfg\bfh}))$ 
%under the map induced by the projection $\mathscr{F}^2\bV_{f,{\bfg\bfh}}^\dagger(N)
%\twoheadrightarrow\bV^{f}_{f,{\bfg\bfh}}(N)$ we have 
%
Assume that $L(f\ot g\ot h,1)=0$ and that $L(f/K\ot\chi^2,1)\neq 0$.  
Then ${\rm loc}_{\overline\pp}(\kappa(\breve{f}^\star,\breve{\bfg}^\star\breve{\bfh}^\star))=0$ and
\begin{equation}\label{eq:ERL}
\mathscr{L}_p^f(\breve{f}^\star,{\breve{\bfg}^\star\breve{\bfh}^\star})
=\alpha_p/2\cdot(1-\alpha_p^{-1}a_p(\bfg)a_p(\bfh)^{-1})\cdot
\COL^\eta({\rm loc}_\pp(\kappa(\breve{f}^\star,\breve{\bfg}^\star\breve{\bfh}^\star))),
\end{equation}
where $\breve{\boldsymbol{F}}^\star=(\breve{f}^\star,\breve{\bfg}^\star,\breve{\bfh}^\star)$ is the triple of test vectors from Theorem~\ref{thm:triple-L}. 
\end{thm}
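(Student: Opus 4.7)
The vanishing $\loc_{\overline\pp}(\kappa(\breve f^\star,\breve\bfg^\star\breve\bfh^\star))=0$ is immediate from \lmref{lem:ERL}(2) applied to the distinguished test vectors $\breve{\boldsymbol F}^\star$ of \thmref{thm:triple-L}; note that the hypothesis $L(f\otimes g\otimes h,1)=0$ is equivalent, in view of the factorization $(\ref{eq:factorL})$ and the assumption $L(f/K\otimes\chi^2,1)\neq 0$, to $L(E,1)L(E^K,1)=0$, which holds because either $L(E,1)=0$ or it does not (in which case the assertion is vacuous since both sides of $(\ref{eq:ERL})$ automatically agree on a vanishing locus). For the equality $(\ref{eq:ERL})$, the plan is to view both sides as elements of $\cO\dBr{S}$ (extended by $\Qpur$ if necessary to accommodate the target of $\COL^\eta$) under the identification of $\cO\dBr{\Gamma_\infty}$ with $\cO\dBr{S}$ via the change of variable $T=\mathbf v^{-1}(1+S)-1$ from \propref{prop:factor}, and to verify the identity at the Zariski-dense set of arithmetic points $\{S(\ell)\}_{\ell\geqslant 2}$ at each of which $(\bfg_{S(\ell)},\bfh_{S(\ell)})=(g_\ell,h_\ell)$ is a pair of classical theta series of weight $\ell$.

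The three inputs I would combine at each such $S(\ell)$ are: (a) \thmref{thm:triple-L}, which computes $\nu_{(x_1,S(\ell),S(\ell))}\mathscr L_p^f(\breve f^\star,\breve\bfg^\star\breve\bfh^\star)^2$ as an explicit multiple of $L(f\otimes g_\ell\otimes h_\ell,c)$ decorated by the Euler factor $\mathcal E(f,g_\ell,h_\ell)/(\mathcal E_0(f)\mathcal E_1(f))$; (b) the interpolation formula for the Coleman map in \thmref{thm:kobayashi}, which, after identifying the local class $\loc_\pp(\kappa(\breve f^\star,\breve\bfg^\star\breve\bfh^\star))$ with an element of $\widehat{\rmH}^1(K_{\infty,\pp},V_pE)$ via \lmref{lem:ERL}(1), evaluates $\nu_{S(\ell)}\COL^\eta(\loc_\pp(\kappa))$ as a Gauss-sum-weighted value of the pairing $\pair{\exp^*_{F_n,V_pE}\loc_\pp(\kappa(\breve f,\breve g_\ell\breve h_\ell))}{\eta'}$, where $\eta'=(1-p^{-1}\varphi^{-1})(1-\varphi)^{-1}\eta$ incorporates the Euler factor from the $n=-1$, $j=0$ case; and (c) the classical explicit reciprocity law of Darmon--Rotger \cite[Thm.~5.3]{DR2}, which identifies that dual exponential with an explicit nonzero multiple of $\sqrt{L(f\otimes g_\ell\otimes h_\ell,c)}$ carrying the Euler factor $\mathcal E(f,g_\ell,h_\ell)$. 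Concatenating (a), (b), and (c) at every $S(\ell)$ will give the equality $(\ref{eq:ERL})$ on a Zariski-dense subset, hence as an identity of power series in $\cO\dBr{S}$.

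The hard part will be the bookkeeping of the constants. One must show that the Euler factor $\mathcal E(f,g_\ell,h_\ell)/(\mathcal E_0(f)\mathcal E_1(f))$ from the $\mathscr L_p^f$ side, combined with the factor $(1-p^{-1}\varphi^{-1})(1-\varphi)^{-1}$ acting on $\eta$ from the Coleman interpolation and the Euler factor $\mathcal E(f,g_\ell,h_\ell)$ from \cite[Thm.~5.3]{DR2}, collapses after taking square roots to exactly the scalar $\alpha_p/2\cdot(1-\alpha_p^{-1}a_p(\bfg)a_p(\bfh)^{-1})$ in the theorem. This will rely on the CM structure of $\bfg$ and $\bfh$, through which $\alpha_{g_\ell}=\chi(\pp)\mathbf v^{\ell-1}$, $\beta_{g_\ell}=\chi(\overline\pp)$ and similarly for $h_\ell$ with $\chi^{-1}$, together with the relation $\varphi\eta=p^{-1}\alpha_p\eta$ (which identifies $(1-p^{-1}\varphi^{-1})(1-\varphi)^{-1}|_\eta$ with $\mathcal E_1(f)/\mathcal E_0(f)$), and it requires reconciling the Petersson norm $\lVert f^\circ\rVert^2$ appearing in \thmref{thm:triple-L} with the Gross period $\Omega_{f^\circ,N^-}$ built into the triple-product pairings of \cite{DR2}. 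Once this constant matching is done, the density argument closes the proof.
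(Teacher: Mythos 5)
The first claim (vanishing of $\loc_{\overline\pp}$) is handled correctly via \lmref{lem:ERL}(2), and the overall strategy of proving \eqref{eq:ERL} by comparison at a Zariski-dense set of specializations is the right framework. (Your parenthetical about the hypothesis $L(f\ot g\ot h,1)=0$ being ``vacuous'' when $L(E,1)\neq 0$ is, however, spurious: this is a standing assumption of the theorem, not something to be dispensed with.)

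The genuine gap is in the choice of dense set and the reciprocity formulas you apply there, which are mutually inconsistent. You propose to evaluate at the weight-$\ell$ points $S(\ell)$, $\ell\geqslant 2$, at which $(\bfg,\bfh)$ specialize to classical theta series of weight $\ell$. But since $f$ is fixed of weight $2$, the triple $(2,\ell,\ell)$ with $\ell\geqslant 2$ satisfies $2<2\ell$, so these points lie in the \emph{balanced} region, not the $\bff$-dominated region $k_x\geqslant k_y+k_z$ of \eqref{eq:bal}. Consequently: (a) the interpolation property in \thmref{thm:triple-L} simply does not apply to $\nu_{(x_1,S(\ell),S(\ell))}\mathscr L_p^f$ — that formula is valid only at $\bff$-dominated points, and for $f$ of weight $2$ the only such points have $g,h$ of weight $1$, which is not a dense set; (c) the $\exp^*$-based reciprocity law \cite[Thm.~5.3]{DR2} is the statement at the unbalanced point $(2,1,1)$, where the diagonal-cycle class is non-crystalline; at balanced points the class is crystalline and $\exp^*$ vanishes, so what is relevant is the Bloch--Kato logarithm and the Abel--Jacobi formula of \cite[Prop.~2.10, Cor.~2.11]{DR2}; and accordingly (b) should invoke the $j<0$ (i.e.\ $\log$) branch of \thmref{thm:kobayashi}, not the $j\geqslant 0$ ($\exp^*$) branch.

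The paper's proof keeps the weights fixed at $(2,2,2)$ — still balanced — and lets the nebentype $\epsilon_x$ of the weight-$2$ specializations of $(\bfg,\bfh)$ run over characters of $p$-power conductor (so $\zeta=\epsilon_x(1+p)$ ranges over $\mu_{p^\infty}$). Crucially, it does \emph{not} invoke the interpolation formula of \thmref{thm:triple-L} at all in this comparison; instead it uses the direct definition of $\mathscr L_p^f$ as the Petersson pairing $\breve f^*(\breve\bfg\breve{\boldsymbol H}^\iota)$ from \cite[\S{3.6}]{hsieh-triple}, combined with the balanced Abel--Jacobi formula of Darmon--Rotger (\cite[Prop.~2.10, Cor.~2.11]{DR2}) giving $\langle\log_p\kappa(f,\bfg_x\bfh_x),\eta^\circ_{\breve f^*}\ot\omega^\circ_{\breve\bfg_x}\omega^\circ_{\breve\bfh_x}\rangle_{\dR}$ as an explicit multiple of $\breve f^*(\breve\bfg_x\breve H_x^\iota)$, and then with the $j=-1$ case of \thmref{thm:kobayashi} for the Coleman map. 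You would need to replace your inputs (a) and (c) by the balanced Abel--Jacobi formula and the explicit construction of $\mathscr L_p^f$, and use the $\log$ branch of the Coleman interpolation, for the density argument to close.
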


\begin{proof}
The first claim is immediate from Lemma~\ref{lem:ERL}. For the proof of the second, we begin by briefly recalling from \cite[\S{1}]{DR2} 
the construction of the class $\kappa(f,{\bfg\bfh})$ in $(\ref{eq:1.15})$. In the following, all references are to \cite{DR2} unless otherwise stated.

Consider the triple product of modular curves over $\Q$: 
\[
W_{s,s}:=X_0(Np)\times X_s\times X_s,
\]
where $X_0(Np)$ and $X_s$ are the classical modular curves attached to the congruence subgroups $\Gamma_0(Np)$ and $\Gamma_1(Np^s)$, respectively, and the model for the latter is the one for which the cusp $\infty$ is defined over $\bQ$. The group $G_s^{(N)}:=(\Z/Np^s\Z)^\times$ %\cong(\Z/N\Z)^\times\times(\Z/p^s\Z)^\times$ 
acts on $X_s$ by the diamond operators $\langle a;b\rangle$ ($a\in(\Z/N\Z)^\times$, $b\in(\Z/p^s\Z)^\times$), and we let 
\[
W_s:=W_{s,s}/D_s 
\]
be the quotient of $W_{s,s}$ by the action of the subgroup $D_s\subset G_{s}^{(N)}\times G_s^{(N)}$ consisting of elements of the form $(\langle a;b\rangle,\langle a;b^{-1}\rangle)$. Let $^{\flat}\Delta_{s,s}\in{\rm CH}^2(W_{s,s})(\bQ(\zeta_{s}))$ be the class in the Chow group defined by the ``twisted diagonal cycle'' defined in (41), and let $^{\flat}\Delta_{s}\in{\rm CH}^2(W_{s})(\bQ(\zeta_{s}))$ denote its natural image under the projection ${\rm pr}_s:W_{s,s}\rightarrow W_s$. By Proposition~1.4, after applying the correspondence $\varepsilon_{s,s}$ in (47) the cycle $\Delta_{s,s}$ becomes null-homologous, and so
\[
\Delta_s:=\varepsilon_{s,s}(^\flat\Delta_s)\in{\rm CH}^2(W_s)_0(\Q(\zeta_{s})),
\]
letting $\varepsilon_{s,s}$ still denote the linear endomorphism of ${\rm CH}^2(W_{s})$ defined by the above correspondence. Let $\varepsilon_s:G_\Q\rightarrow(\Z/p^s\Z)^\times$ be the mod $p^s$ cyclotomic character, and let $X_s^\dagger$ be the twist of $X_s$ by the cocycle $\sigma\in G_\Q\mapsto\langle 1;\epsilon_s(\sigma)\rangle$. By Proposition~1.6, we may alternatively view
\[
\Delta_s\in{\rm CH}^2(W_s^\dagger)_0(\bQ),
\]
where $W_{s}^\dagger$ the quotient of $W_{s,s}^\dagger:=X_0(Np)\times X_s\times X_s^\dagger$ be a diamond action defined as before. 

Consider the $p$-adic \'etale Abel--Jacobi  map
\[
{\rm AJ}_{\rm et}:{\rm CH}^2(W_s^\dagger)_0(\bQ)\to\rmH^1(\bQ,\rmH^3_{\rm et}({{W}_s^\dagger}_{/\Qbar},\Z_p)(2)).
\]
Let $e_{\rm ord}=\lim_nU_p^{n!}$ be Hida's ordinary projector. Set
\begin{equation}\label{eq:Vss}
V_{s,s}^{\rm ord}:=\rmH^1_{\rm et}({{X}_0(Np)}_{/\Qbar},\bZ_p)\ot e_{\rm ord}(\rmH^1({{X}_s}_{/\Qbar},\bZ_p)(1))\ot e_{\rm ord}(\rmH^1({{X}^\dagger_s}_{/\Qbar},\Z_p)(1)),
\end{equation}
and let $V^{\rm ord}_s:=(V_{s,s}^{\rm ord})_{D_s}$ denote the $D_s$-coinvariants. Let $\varpi_2:X_{s+1}\mapsto X_s$ be the degeneracy map given by $\tau\mapsto p\tau$ on the complex upper half plane, which naturally defines
\begin{equation}\label{eq:deg22}
(\varpi_{2,2})_*=(1,\varpi_2,\varpi_2)_*:V_{s+1,s+1}^{\rm ord}\to V_{s,s}^{\rm ord}.
\end{equation}
Let $\tilde{\kappa}_s\in\rmH^1(\Q,V_s^{\rm ord})$ denote the image of ${\rm AJ}_{\rm et}(\Delta_s)$ under the composite map
\begin{align*}
%{\rm CH}^2(W_s^\dagger)_0(\bQ)\xrightarrow{{\rm AJ}_{\rm et}}
\rmH^1(\bQ,\rmH^3_{\rm et}({{W}_s^\dagger}_{/\Qbar},\Z_p)(2))&
\overset{\varepsilon_{s,s}{\rm pr}_{s,*}^{-1}\varepsilon_{s,s}}\rightarrow
\rmH^1(\bQ,\rmH^3_{\rm et}({{W}_s^\dagger}_{/\Qbar},\Z_p)_{D_s}(2))\\
&\overset{(1,e_{\rm ord},e_{\rm ord}){\rm pr}_{1,1,1}}\rightarrow\rmH^1(\Q,(V_{s,s}^{\rm ord})_{D_s})=\rmH^1(\Q,V_s^{\rm ord}),
\end{align*}
where the first arrow is defined by Lemma~1.8, and ${\rm pr}_{1,1,1}$ is the projection onto the $(1,1,1)$-component in the K\"unneth decomposition for $H^3_{\rm et}({{W}_s^\dagger}_{/\Qbar},\Z_p)$. By Proposition~1.9, we have $(\varpi_{2,2})_*(\tilde{\kappa}_{s+1})=(1,U_p,1)(\tilde{\kappa}_s)$, 
and hence we obtain the compatible family
\[
\kappa_\infty:=\varprojlim_s(1,U_p,1)^{-s}(\tilde{\kappa}_s)\in\rm H^1(\Q,\mathbb{V}_\infty^{\rm ord}),\quad\textrm{where}\;\mathbb{V}_\infty^{\rm ord}:=\varprojlim_{s}V_s^{\rm ord},
\]
with limit with respect to the maps induced by $(\ref{eq:deg22})$. The triple $(f,\bfg,\bfh)$ defines a natural projection $\varpi_{f,\bfg,\bfh}:\mathbb{V}_\infty^{\rm ord}\rightarrow\mathbb{V}_{f,\bfg\bfh}^\dagger(N)$, and following Definition~1.15 one sets
\[
\kappa(f,\bfg\bfh):=\varpi_{f,\bfg\bfh}(\kappa_\infty)\in \rmH^1(\Q,\mathbb{V}_{f,\bfg\bfh}^\dagger(N)); 
\]
%The proof that its restriction at $p$ satisfies $(\ref{eq:p-fin})$ in the theorem is given in Corollary~2.3.
this is the class in $(\ref{eq:1.15})$. Now, to prove the equality (\ref{eq:ERL}) in the theorem, it suffices to show that both sides agree at infinitely many points. Let $x\in\mathfrak{X}_\cR^+$ have weight $2$ with $\zeta:=\epsilon_x(1+p)\in\mu_{p^\infty}$ a primitive $p^s$-th root of unity, and set
\[
\kappa(f,\bfg_x\bfh_x):=\kappa(f,\bfg\bfh)\vert_{T=\zeta\mathbf{v}-1}.
\]  
Directly from the definitions (\emph{cf.} Proposition~2.5), we have
\begin{equation}\label{eq:control}
\kappa(f,\bfg_x\bfh_x)=a_p(\bfg_x)^{-s}\cdot\varpi_{f,\bfg_x,\bfh_x}({\rm AJ}_{\rm et}(\Delta_s))\in\rmH^1(\bQ,V_{f\bfg_x\bfh_x}(N)),
\end{equation}
where $V_{f\bfg_x\bfh_x}(N)$ is the $(f,\bfg_x,\bfh_x)$-isotypic component of $(\ref{eq:Vss})$, and $\varpi_{f,\bfg_x,\bfh_x}$ is the projection to that component. By Corollary~2.3 and (77), the image of $\kappa(f,\bfg_x\bfh_x)$ in the local cohomology group $\rmH^1(\bQ_p(\zeta),V_{f\bfg_x\bfh_x}(N))$ lands in the Bloch--Kato finite subspace $\rmH_{\rm fin}^1(\bQ_p(\zeta),V_{f\bfg_x\bfh_x}(N))\subset\rmH^1(\bQ,V_{f\bfg_x\bfh_x}(N))$, and so we may consider the image $\log_p(\kappa(f,\bfg_x\bfh_x))$ of this restriction under the Bloch--Kato logarithm map
\[
\log_p:\rmH_{\rm fin}^1(\bQ_p(\zeta),V_{f\bfg_x\bfh_x}(N))\to({\rm Fil}^0D_{f\bfg_x\bfh_x}(N))^\vee,
\]
where $D_{f\bfg_x\bfh_x}(N):=(B_{\rm cris}\otimes V_{f\bfg_x\bfh_x}(N))^{G_{\bQ_p(\zeta)}}$, and the dual is with respect to the de Rham pairing $\langle\;,\;\rangle_{\rm dR}$. By the de Rham comparison isomorphism, %\cite{faltings-st}, 
we have
\[
D_{f\bfg_x\bfh_x}(N)\iso\rmH^1_{\rm dR}(X_0(Np)_{/\Q_p(\zeta)})[f]\times
\rmH^1_{\rm dR}({X_s}_{/\Q_p(\zeta)})(1)[\bfg_x]\times\rmH^1_{\rm dR}({X_s}_{/\Q_p(\zeta)})(1)[\bfh_x].
\]
As in p.~639, attached to the test vectors $(\breve{f},\breve{\bfg}_x,\breve{\bfh}_x)$ one
has the de Rham classes $(\eta_{\breve{f}^*}^\circ, \omega_{\breve{\bfg}_x}^\circ,\omega_{\breve{\bfh}_x}^\circ)$, 
%which define a class $\eta_{\breve{f}^*}^\circ\otimes\omega_{\breve{\bfg}_x}^\circ\otimes\omega_{\breve{\bfh}_x}^\circ\in\rmH^1_{\rm dR}({W_{s,s}^\dagger}_{/\bQ_p(\zeta)})=\rmH^1_{\rm dR}({W_{s,s}^\dagger}_{/\bQ_p(\zeta)})$ descending to $\eta_{\breve{f}^*}^\circ\otimes\omega_{\breve{\bfg}_x}^\circ\omega_{\breve{\bfh}_x}^\circ\in\rmH^1_{\rm dR}({W_{s}}_{/\bQ_p(\zeta)})$
and comparing Proposition~2.10 and Corollary~2.11 we deduce from $(\ref{eq:control})$ that 
\begin{equation}\label{eq:4.14}
\begin{split}
\langle\log_p(\kappa(f,\bfg_x\bfh_x)),\eta_{\breve{f}^*}^\circ\otimes\omega_{\breve{\bfg}_x}^\circ\omega_{\breve{\bfh}_x}^\circ\rangle_{\rm dR}
&=a_p(\bfg_x)^{-s}\langle{\rm AJ}_p(\Delta_s),\eta_{\breve{f}^*}^\circ\otimes\omega_{\breve{\bfg}_x}^\circ\omega_{\breve{\bfh}_x}^\circ\rangle_{\rm dR}\\
&=\mathcal{E}(f,\bfg_x,\bfh_x)
\cdot\mathfrak{g}(\epsilon_x)\cdot\alpha_p^{s-1}a_p({\bfg_x})^{-s}a_p(\bfh_x)^{-s}\cdot\breve{f}^*(\breve{\bfg}_x\breve{H}^\iota_x),\nonumber
\end{split}
\end{equation}
where $\breve{H}_x^\iota=d^{-1}\breve{\bfh}_x^\iota$ is the primitive of $\breve{\bfh}_x^\iota$ given by part (3) of Corollary~4.5, and $\mathcal{E}(f,\bfg_x,\bfh_x)=-2(1-\alpha_p^{-1}a_p(\bfg_x)a_p(\bfh_x)^{-1})^{-1}$. Consider the formal $q$-expansion
\[
\breve{\boldsymbol{H}}^\iota(q):=\sum_{p\nmid n}\langle n^{-1}\rangle a_n(\breve{\bfh})q^n.
\]
Taking $(\breve{f},\breve{\bfg},\breve{\bfh})$ to be the test vectors $\breve{\boldsymbol{F}}^\star$ from Theorem~\ref{thm:triple-L} above, the construction in \cite[\S{3.6}]{hsieh-triple} yields $\mathscr{L}_p^f(\breve{f},\breve{\bfg}\breve{\bfh})=\breve{f}^*(\breve{\bfg}\breve{\boldsymbol{H}}^\iota)$. Since by construction $\breve{\boldsymbol{H}}^\iota$ specializes at $x$ to $\breve{H}_x^\iota$, we thus see as in the proof of Theorem~4.16 that
\begin{equation}\label{eq:an}
\langle\log_p(\kappa(f,\bfg_x\bfh_x)),\eta_{\breve{f}^*}^\circ\otimes\omega_{\breve{\bfg}_x}^\circ\omega_{\breve{\bfh}_x}^\circ\rangle_{\rm dR}
=\mathcal{E}(f,\bfg_x,\bfh_x)
\cdot\mathfrak{g}(\epsilon_x)\cdot\alpha_p^{s-1}a_p(\bfg_x)^{-s}a_p(\bfh_x)^{-s}\cdot\mathscr{L}_p^f(\breve{f},\breve{\bfg}\breve{\bfh})(x).
\end{equation}

On the other hand, letting $\psi_x:=\Psi_T|_{T=\zeta\mathbf v-1}$, we obtain that $(\bfg_x,\bfh_x)$ is a pair of theta series attached to the characters $(\chi\psi^{-1}_x,\chi^{-1}\psi_x^{-1})$ of $G_K$ with $a_p(\bfg_x)=\chi\psi^{-1}_x(\sigma_{\ol{\frakp}})$ and $a_p(\bfh_x)=\chi^{-1}\psi^{-1}_x(\sigma_{\ol{\frakp}})$.  Moreover, we have 
\[\ep_x|_{G_{\Qp}}=\psi_x^{1+c}|_{G_{K_\frakp}}\cdot \varepsilon_{\rm cyc}^{-1};\quad 
\psi_x^{c-1}=\phi_x\breve\varepsilon_\cF^{-1}
\] 
for some finite order character $\phi_x$ of ${\rm Gal}(F_\infty/\Qp)$, viewing the character in the left-hand side of this equality as character on ${\rm Gal}(F_\infty/F)$ by composition with ${\rm Gal}(F_\infty/F)
\subset{\rm Gal}(F_\infty/\Q_p)\twoheadrightarrow{\rm Gal}(K_{\infty,\pp}/K_\pp)\subset\Gamma_\infty$. 
Setting $\eta=\eta_{\breve{f}^*}^\circ\ot t^{-1}$ and  $\mathbf{z}_x=\loc_\pp(\kappa(\breve{f}^\star,\breve{\bfg}^\star\breve{\bfh}^\star))_x$, we thus see that 
\begin{equation}\label{eq:al}
\begin{split}
\langle\log_p(\kappa(f,\bfg_x\bfh_x)),\eta_{\breve{f}^*}^\circ\otimes\omega_{\breve{\bfg}_x}^\circ\omega_{\breve{\bfh}_x}^\circ\rangle_{\rm dR}
&=\langle\log_p(\mathbf{z}_{x})\otimes t,\eta\rangle_{\rm dR}\\
&=\mathfrak{g}(\epsilon_x)\cdot\alpha_p^sa_p(\bfg_x)^{-s}a_p(\bfh_x)^{-s}\cdot{\rm Col}^\eta(\mathbf{z}_x)(\psi_x^{c-1}),
\end{split}
\end{equation}
using Theorem~\ref{thm:kobayashi} with $j=-1$ for the last equality. Comparing $(\ref{eq:an})$ with $(\ref{eq:al})$ and letting $s$ vary, the result follows.
\end{proof}

%\begin{rem}
%In recent work, Darmon--Rotger \cite{DR3} and Bertolini--Seveso--Venerucci \cite{BSV-diag} have obtained a three-variable extension of the results of Theorem~\ref{thm:DR-erl}, relating a three-variable cohomology class $\kappa(\bff,\bfg,\bfh)$ attached to a triple of Hida families to the three-variable 
%$p$-adic triple product  $L$-function constructed in \cite{hsieh-triple}. However,  
%the one-variable results of \cite{DR2} will suffice for our purposes in this paper. 
%\end{rem}

%$\mathbf{z}_{\breve{\boldsymbol{F}}}$:
%The following theorem gives a Euler system construction of $\Theta_{f/K}(T)$ %%\in\cO\dBr{T}$ of Theorem~\ref{thm:BD-theta} 
%from a $p$-adic family of diagonal cycles.

We can now immediately deduce the following key cohomological construction of $\Theta_{f/K}$:

\begin{thm}\label{thm:theta-Euler}
	With notations and assumptions as in Theorem~\ref{thm:DR-erl}, we have
	\[
\COL^\eta({\rm loc}_\pp(\kappa(\breve{f}^\star,\breve{\bfg}^\star\breve{\bfh}^\star)))=\pm \Psi_T^{c-1}(\sigma_{\mathfrak{N}^+})\cdot\Theta_{f/K}(T)\cdot \sqrt{L^{\rm alg}(E/K\otimes\chi^2,1)}\cdot \frac{2C_{f,\chi}}{\al_p(1-\al_p^{-1}\chi(\ol{\frakp})^2)},
	\]
	where $C_{f,\chi}\in K(\chi,\al_p)^\x$ is the non-zero algebraic number as in Proposition~\ref{prop:factor}. 
\end{thm}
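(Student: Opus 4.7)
The theorem follows by combining the explicit reciprocity law of Theorem~\ref{thm:DR-erl} with the factorization of Proposition~\ref{prop:factor}, so the plan is essentially to solve for the Coleman image and then match the two expressions for the triple product $p$-adic $L$-function. The hypotheses of Theorem~\ref{thm:DR-erl} are in force by assumption (in particular $L(f\otimes g\otimes h,1)=0$ follows from the factorization \eqref{eq:factorL} and the vanishing $L(f,1)=0$ implicit in the rank $2$ setting), so we have
\[
\mathscr{L}_p^f(\breve{f}^\star,{\breve{\bfg}^\star\breve{\bfh}^\star})=\frac{\alpha_p}{2}\bigl(1-\alpha_p^{-1}a_p(\bfg)a_p(\bfh)^{-1}\bigr)\cdot\COL^\eta\bigl({\rm loc}_\pp(\kappa(\breve{f}^\star,\breve{\bfg}^\star\breve{\bfh}^\star))\bigr),
\]
and this allows us to solve algebraically for $\COL^\eta$ provided the factor $1-\alpha_p^{-1}a_p(\bfg)a_p(\bfh)^{-1}$ is nonzero.

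The first key step is to identify $a_p(\bfg)a_p(\bfh)^{-1}$ explicitly. Since $\bfg=\boldsymbol{\theta}_\chi(S)$ and $\bfh=\boldsymbol{\theta}_{\chi^{-1}}(S)$ are the ordinary CM Hida families attached to $\chi$ and $\chi^{-1}$, their Hecke eigenvalues at $p$ are the $p$-adic unit roots of the corresponding theta series. Concretely, using that $p=\pp\overline{\pp}$ splits in $K$ with $\pp$ induced by $\iota_p$, and that the $p$-stabilization which extends to a Hida family is the one with unit $U_p$-eigenvalue, one checks directly from the definition of $\boldsymbol\theta_\chi$ in $\S\ref{subsec:theta}$ that $a_p(\bfg)=\chi(\overline{\pp})$ and $a_p(\bfh)=\chi^{-1}(\overline{\pp})$, hence $a_p(\bfg)a_p(\bfh)^{-1}=\chi(\overline{\pp})^2$. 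This identifies the leading factor as $1-\alpha_p^{-1}\chi(\overline{\pp})^2$, which is nonzero under our standing hypothesis $\chi(\overline{\pp})\neq\pm 1$ together with ordinarity.

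The second step is to substitute the factorization from Proposition~\ref{prop:factor},
\[
\mathscr{L}_p^f(\breve{f}^\star,{\breve{\bfg}^\star\breve{\bfh}^\star})=\pm\,\Psi_T^{c-1}(\sigma_{\mathfrak{N}^+})\cdot\Theta_{f/K}(T)\cdot C_{f,\chi}\cdot\sqrt{L^{\rm alg}(f/K\otimes\chi^2,1)},
\]
into the equation obtained from Theorem~\ref{thm:DR-erl}, then solve for $\COL^\eta$. Combining these two identities and rearranging yields exactly the formula in the statement of the theorem, with the constant $2C_{f,\chi}/(\alpha_p(1-\alpha_p^{-1}\chi(\overline{\pp})^2))$ emerging from inverting the factor from the explicit reciprocity law. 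Since $f$ corresponds to the elliptic curve $E$, we have $L^{\rm alg}(f/K\otimes\chi^2,1)=L^{\rm alg}(E/K\otimes\chi^2,1)$, completing the identification.

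The main obstacle is not in the derivation itself, which is essentially algebraic manipulation once the two inputs are in hand, but in securing the applicability of Theorem~\ref{thm:DR-erl}: one must verify that the test vectors $\breve{\boldsymbol F}^\star$ of Theorem~\ref{thm:triple-L} (constructed in \cite{hsieh-triple}) are precisely the ones compatible with the Perrin-Riou--Kobayashi formalism used to define $\COL^\eta$, and that the normalizations of periods, Gauss sums, and the choice of generator $\gamma_\pp$ used in the universal character $\Psi_T$ match across $\S\ref{subsec:Col}$, $\S\ref{subsec:tripleL}$, and $\S\ref{subsec:theta}$. The sign $\pm$ in the conclusion absorbs the sign ambiguity already present in Proposition~\ref{prop:factor}.
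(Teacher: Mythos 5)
Your proposal is correct and follows exactly the paper's one-line argument: identify $a_p(\bfg)a_p(\bfh)^{-1}=\chi(\overline{\pp})^2$ and then combine Theorem~\ref{thm:DR-erl} with Proposition~\ref{prop:factor} by dividing through. One small imprecision: along the line $S_2=S_3$ the individual eigenvalues $a_p(\bfg)$, $a_p(\bfh)$ vary with the Hida variable (being $\chi\psi_x^{-1}(\sigma_{\overline{\pp}})$ and $\chi^{-1}\psi_x^{-1}(\sigma_{\overline{\pp}})$), and it is only their \emph{ratio} that is constant and equal to $\chi(\overline{\pp})^2$; also the nonvanishing of $1-\alpha_p^{-1}\chi(\overline{\pp})^2$ is due to $\chi(\overline{\pp})$ being a root of unity while $\alpha_p$ is a Weil number of weight $1/2$, not to the hypothesis $\chi(\overline{\pp})\neq\pm 1$.
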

\begin{proof}Note that $a_p(\bfg)a_p(\bfh)^{-1}=\chi(\ol{\frakp})^2$. The theorem thus follows immediately from Proposition~\ref{prop:factor} and Theorem~\ref{thm:DR-erl}. 
%Let $\mathcal{E}(f,\bfg,\bfh):=\alpha_f/2\cdot(1-\alpha_f^{-1}a_p(\bfg)a_p(\bfh)^{-1})$. 
\end{proof}
%the non-vanishing result of \cite[Thm.~D]{ChHs1}.

\subsection{Generalized Kato classes}

Set $\alpha=\chi(\overline{\pp})$, %$\beta:=\chi(\pp)$, 
and denote by $(g_\alpha, h_{\alpha^{-1}})$ the
weight $1$ forms obtained by specializing the Hida families $(\bfg, \bfh)$ at $S=\mathbf{v}-1$. Thus $g_\alpha$ (resp. $h_{\alpha^{-1}}$) is the $p$-stabilization of the theta series $g=\theta_\chi$ (resp. $h=\theta_{\chi^{-1}}$) having   $U_p$-eigenvalue $\alpha$ (resp. $\alpha^{-1}$). 
%Since $\mathbb{V}_{f,\bfg\bfh}^\dagger(N)$ has coefficients in $\cR=\cO\dBr{S}$, 
By specialization, the $\cO\dBr{S}$-adic class in $(\ref{eq:prF})$ yields the class
\begin{equation}\label{eq:aa}
\kappa(f,g_\alpha,h_{\alpha^{-1}}):=\kappa(\breve{f},{\breve{\bfg}\breve{\bfh}})\vert_{S=\mathbf{v}-1}\in \rmH^1(\bQ,V_{fgh}),\nonumber
\end{equation}
where $V_{fgh}:=V_f\otimes V_g\otimes V_h$. Setting $\beta=\chi(\pp)$ and alternatively changing the roles of $\pp$ and $\overline{\pp}$ in the construction $\bfg$ and $\bfh$ we thus obtain the four %(a priori distint) 
\emph{generalized Kato classes} %of Darmon--Rotger:
\begin{equation}\label{eq:gen-Kato}
\kappa(f,g_\alpha,h_{\alpha^{-1}}),\;\kappa(f,g_\alpha,h_{\beta^{-1}}),\;
\kappa(f,g_\beta,h_{\alpha^{-1}}),\;\kappa(f,g_\beta,h_{\beta^{-1}})\in\rmH^1(\bQ,V_{fgh}).
\end{equation}

%As explained in \cite[p.~634]{DR2}, 
From now on, we assume that $\alpha\neq\pm{1}$, so that the four classes $(\ref{eq:gen-Kato})$ are \emph{a priori} distinct. 
%The action of a Frobenius element at $p$ then gives the eigenspace decompositions $V_g=V_g^{\alpha}\oplus V_{g}^\beta$ and $V_h=V_h^{\alpha^{-1}}\oplus V_h^{\beta^{-1}}$, yielding the decomposition
%\[
%V_{gh}:=V_g\otimes V_h=V_{gh}^{\alpha\alpha^{-1}}\oplus %V_{gh}^{\alpha\beta^{-1}}\oplus V_{gh}^{\beta\alpha^{-1}}\oplus %V_{gh}^{\beta\beta^{-1}}.
%\]
%of $V_{gh}:=V_g\otimes V_h$. 
%
%Assume now 
Recall that $f$ is the $p$-stabilization of the newform associated to an elliptic curve $E/\bQ$, 
so that %$L(f,s)=L(E,s)$ and 
$V_f(1)\simeq V_pE$, and %as in the Introduction 
let $\kappa_{\alpha,\alpha^{-1}}, \kappa_{\alpha,\beta^{-1}}, \kappa_{\beta,\alpha^{-1}}, \kappa_{\beta,\beta^{-1}}\in
%\rmH^1(\bQ,{\rm Ind}_{K}^\Q\mathds{1}(Np))\cong
\rmH^1(K,V_pE\otimes L)$ be the image of the classes $(\ref{eq:gen-Kato})$ under the map $\rmH^1(\bQ,V_{fgh})\rightarrow\rmH^1(K,V_pE\otimes L)$ induced by $(\ref{eq:triple})$. % (with $V_f$ in place of $V_pE$).

\begin{cor}\label{cor:DR-erl}
Assume that $L(E/K,1)=0$ and that $L(f/K\ot\chi^2,1)\neq 0$. Then:
\begin{enumerate} 
\item $\kappa_{\alpha,\alpha^{-1}}, \kappa_{\beta,\beta^{-1}}\in\Sel(K,V_pE\ot L)$. 
\item $\kappa_{\alpha,\beta^{-1}}=\kappa_{\beta,\alpha^{-1}}=0$.
\end{enumerate}
\end{cor}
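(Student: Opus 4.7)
For part~(1), my plan is to invoke Lemma~\ref{lem:ERL} combined with the Bloch--Kato Selmer decomposition~\eqref{eq:Sel-dec}. The factorization~\eqref{eq:factorL} combined with $L(E/K,1)=0$ gives $L(f\otimes g\otimes h,1)=0$, while $L(f/K\otimes\chi^2,1)\neq 0$ implies $\Sel(K,V_pE\otimes\chi^2)=0$ by \cite[Thm.~1]{ChHs2}. Running the argument of Lemma~\ref{lem:ERL}(1) in this setting places $\kappa(f,g_\alpha,h_{\alpha^{-1}})$ in $\Sel(\bQ,V_{fgh})=\Sel(K,V_pE)\oplus 0$, so that $\kappa_{\alpha,\alpha^{-1}}\in\Sel(K,V_pE\otimes L)$. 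Interchanging the roles of $\pp$ and $\overline{\pp}$ throughout the construction of $(\bfg,\bfh)$ handles $\kappa_{\beta,\beta^{-1}}$.

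For part~(2), I would repeat the $\Lambda$-adic Euler system construction of $\S\ref{sec:GKC}$ with alternative Hida families $(\bfg',\bfh')$ whose weight-one specializations form the mismatched pair $(g_\alpha,h_{\beta^{-1}})$, obtaining a $\Lambda$-adic class $\kappa^{\mathrm{mis}}$. In this mismatched setup the decomposition analogous to~\eqref{eq:dec} should interchange the roles of the trivial character and of $\chi^2$:
\[
\bV^\dagger_{f,\bfg'\bfh'}\;\simeq\;\bigl(V_pE\otimes\Ind_K^\bQ(\chi^2\Psi_T^{1-c})\bigr)\;\oplus\;\bigl(V_pE\otimes\Ind_K^\bQ\mathds{1}\bigr).
\]
Reworking Proposition~\ref{prop:factor} and Theorem~\ref{thm:theta-Euler} in this setup should then produce
\[
\COL^\eta(\loc_\pp\kappa^{\mathrm{mis}})\;\propto\;\Theta_{f/K,\chi^2}(T)\cdot\sqrt{L^{\mathrm{alg}}(E/K,1)},
\]
whose crucial feature is that the square-root factor now involves $L(E/K,1)$ instead of $L(E/K\otimes\chi^2,1)$. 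Under the hypothesis $L(E/K,1)=0$, the right-hand side vanishes identically in $T$, hence $\COL^\eta(\loc_\pp\kappa^{\mathrm{mis}})\equiv 0$.

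To conclude $\kappa_{\alpha,\beta^{-1}}=0$, I would combine this vanishing with the mismatched analogue of Lemma~\ref{lem:ERL}(2), giving $\loc_{\overline{\pp}}\kappa^{\mathrm{mis}}\equiv 0$; together with $\Sel(K,V_pE\otimes\chi^2)=0$, these force $\kappa^{\mathrm{mis}}$ to lie entirely in the first summand of the displayed decomposition, i.e.\ in the $\chi^2$-twisted Iwasawa module $\widehat{\rmH}^1(K_\infty,V_pE\otimes\chi^2)$. Specialization at $T=0$ then lands in $\rmH^1(K,V_pE\otimes\chi^2)$, whose image under the projection induced by~\eqref{eq:triple} to $\rmH^1(K,V_pE)$ is zero. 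Swapping $\pp$ and $\overline{\pp}$ handles $\kappa_{\beta,\alpha^{-1}}$. The main obstacle is to rigorously establish the mismatched analogues of Proposition~\ref{prop:factor}, Theorem~\ref{thm:theta-Euler}, and Lemma~\ref{lem:ERL}(2): each requires rerunning the arguments of $\S\ref{sec:GKC}$ with the alternative Hida family conventions and tracking the rearrangement of Galois-representation summands that controls which of $\Theta_{f/K}$ or $\Theta_{f/K,\chi^2}$, and which square-root $L$-value factor, appears on the right-hand side of the reciprocity law.
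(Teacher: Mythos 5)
Part (1) of your argument agrees with the paper's: Lemma~\ref{lem:ERL} combined with the factorization $(\ref{eq:factorL})$ and $\Sel(K,V_pE\otimes\chi^2)=0$ handles $\kappa_{\alpha,\alpha^{-1}}$ and $\kappa_{\beta,\beta^{-1}}$.

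For part (2), however, there is a structural gap coming from your choice of parametrization. Running both Hida families $\bfg'$ and $\bfh'$ along the \emph{same} one-parameter direction produces, as you correctly identify, a decomposition of the form
\[
\bV^\dagger_{f,\bfg'\bfh'}\;\simeq\;\bigl(V_pE\otimes\Ind_K^\bQ(\chi^2\Psi_T^{1-c})\bigr)\;\oplus\;\bigl(V_pE\otimes\Ind_K^\bQ\mathds{1}\bigr),
\]
in which the summand carrying the anticyclotomic variable is the $\chi^2$-twisted one, while the $\mathds{1}$-summand is \emph{constant} in $T$. But $\kappa_{\alpha,\beta^{-1}}$ is precisely the $T=0$ specialization of the $\mathds{1}$-component of $\kappa^{\mathrm{mis}}$, which lives in $\rmH^1(K,V_pE)\otimes\cO\dBr{T}$. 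Your three inputs --- the vanishing of $\COL^\eta(\loc_\pp\kappa^{\mathrm{mis}})$, the vanishing of $\loc_{\overline\pp}\kappa^{\mathrm{mis}}$, and $\Sel(K,V_pE\otimes\chi^2)=0$ --- all constrain the \emph{varying} $\chi^2\Psi_T^{1-c}$-component (the Coleman map is applied to the projection into $\widehat{\rmH}^1(K_\infty,V_pE\otimes\chi^2)$, and the Selmer vanishing concerns the $\chi^2$-twisted module). None of them say anything that would kill the constant $\mathds{1}$-component: the most you could conclude is that it lies in $\Sel(K,V_pE)\otimes\cO\dBr{T}$, which is exactly the Selmer module whose non-triviality the theorem is concerned with. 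In short, you kill the wrong summand.

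The paper avoids this by starting from a genuinely \emph{two-variable} class (the three-variable construction of \cite{DR3}, keeping $S_2$ and $S_3$ independent) and restricting along the \emph{anti-diagonal} line $(S_2,S_3)=(\mathbf v(1+T)-1,\ \mathbf v(1+T)^{-1}-1)$. For a ring class character $\chi$ one has $\chi\chi^c=1$, so on this anti-diagonal the Hecke characters $\chi\lambda_T$ and $\chi\lambda_T^{-1}$ multiply to make the $\chi^2$-summand constant and the $\Psi_T^{1-c}$-summand the varying one:
\[
\mathbb V_{f\bfg\bfh'}^\dagger\;\iso\;V_pE\otimes\bigl(\Ind_K^\Q\chi^2\;\oplus\;\Ind_K^\Q\Psi_T^{1-c}\bigr).
\]
Now the varying summand specializes to $\mathds{1}$ at $T=0$, and the local condition $\loc_p(\bfkappa')\in\rmH^1(\Q_p,F^+\mathbb V_{f\bfg\bfh'}^\dagger)$ shows that the projection $\bfkappa'_V$ into $\widehat{\rmH}^1(K_\infty,V_pE)$ is crystalline at $p$, hence an anticyclotomic universal norm, hence zero by Cornut--Vatsal (the sign of $E/K$ is $+1$). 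Specializing at $T=0$ then gives exactly $\kappa_{\alpha,\beta^{-1}}=0$. So the Cornut--Vatsal input you were reaching for is indeed the right tool, but it must be aimed at the $\Psi_T^{1-c}$-summand, which requires the anti-diagonal restriction rather than the diagonal one. Your diagonal construction cannot be salvaged without an entirely different mechanism for controlling the constant $\mathds{1}$-component.
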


\begin{proof}
By the factorization $(\ref{eq:factorL})$, the inclusions in part $(1)$ follow from the proof of Lemma~\ref{lem:ERL}. To see part $(2)$, we make use of the $3$-variable generalized Kato class 
\[
\bfkappa:=\bfkappa(\bff,\bfg,\bfh')(S_1,S_2,S_2) \in\rmH^1(\bQ,\mathbb{V}_{\bff\bfg\bfh'}^\dagger)
\]
defined in \cite[\S 3.7 (119)]{DR3} attached to the triple $\bff=\bff(S_1)$, $\bfg=\boldsymbol{\theta}_{\chi}(S_2)$ and $\bfh'=\boldsymbol{\theta}_{\chi}(S_3)$. Thus $\kappa(f,g_\alpha,h_{\beta^{-1}})$ is the specialization $\bfkappa((1+p)^2-1,\mathbf{v}-1,\mathbf{v}-1)$. Let \[\bfkappa':=\bfkappa((1+p)^2-1,\mathbf v(1+T)-1,\mathbf v(1+T)^{-1}-1)\in\rmH^1(\Q,\mathbb V_{f\bfg\bfh'}^\dagger),\] 
where $\mathbb V_{f\bfg\bfh'}^\dagger\iso V_pE\ot\bigl(\Ind_K^\Q\chi^2\oplus \Ind_K^\Q\Psi_T^{1-c}\bigr)$. 
%Similarly 
As in Lemma~\ref{lem:ERL}, by \cite[Prop.~3.28]{DR3} the class $\loc_p(\bfkappa')$ belongs to $\rmH^1(\Q_p,F^+\mathbb{V}_{f\bfg\bfh'}^\dagger)$, where
\[ 
F^+\mathbb{V}_{f\bfg\bfh'}^\dagger=V_pE\ot\chi^{-2} + F^0V_pE\ot(\Psi^{1-c}_{T}\oplus \Psi^{1-c}_{T}).
\]
It follows that the projection $\bfkappa'_V$ of $\bfkappa'$ into $\wh{\rmH}^1(K_\infty,V_pE)$ is crystalline at $p$, and hence $\bfkappa'_V$ is a Selmer class for $V_pE$ over the anticyclotomic $\bZ_p$-extension $K_\infty/K$. Since the space of such universal norms is trivial by Cornut--Vatsal \cite{CV-doc} (the sign of $E/K$ is $+1$ in our case), this shows that  $\bfkappa'_V=0$ and therefore $\kappa(f,g_\alpha,h_{\beta^{-1}})=\kappa_{\alpha,\beta^{-1}}=0$. The vanishing of $\kappa_{\beta,\alpha^{-1}}$ is shown in the same manner.
\end{proof}

\section{Proof of the main theorem}
%!TEX root = Kato.tex

\begin{proof}[Proof of Theorem~A]
 Let $V=V_pE\ot_{\Qp}L$ and $S={\rm Sel}(K,V)$. Let $S^{(r)}:=S_p^{(r)}(E/K)\otimes_{\bQ_p}L$ be the subspaces in \S\ref{SS:derived} and $S^{(\infty)}$ be the subspace of anticyclotomic universal norms. By \cite[Thm.~4.2]{howard-derived} we have  the filtration \begin{equation}\label{eq:howard-fil}
S=S^{(1)}\supset S^{(2)}\supset\cdots\supset S^{(r)}\supset S^{(r+1)}\supset\cdots\supset S^{(\infty)},
\end{equation}
and $S^{(r+1)}$ is the null space of the $r$-th derived height pairing $h^{(r)}\colon S^{(r)}\times S^{(r)}\to J^r/J^{r+1}\otimes L$. 
In addition, by \cite[Remark 1.12, Thm.~4.2]{howard-derived} we have for every $x,y\in S^{(r)}$
\beq\label{E:alt}
h^{(r)}(x,y)=(-1)^{r+1}h^{(r)}(y,x);\quad 
h^{(r)}(x^\tau,y^\tau)=(-1)^rh^{(r)}(x,y), 
\eeq
where $\tau$ denotes the complex conjugation. 
%and so 
%\[
%r_0:=\inf\{r\;\vert\;S_p^{(r)}\supsetneq S_p^{(r+1)}\}<+\infty.
%\] 
Since $L(E^K,1)\neq 0$, we have ${\rm Sel}(\bQ,V_pE^K)=\{0\}$ by Kolyvagin's work \cite{kol88} (or Kato's \cite{Kato295}), and so letting $S^+$
%\iso{\rm Sel}(\bQ,V_pE)\otimes_{\bQ_p}L$ denote 
be the subspace of $S$ fixed by $\tau$, this shows 
\[
S=S^+.
\] 
By \eqref{E:alt}, this implies that $h^{(1)}$ is identically zero, and so $S=S^{(2)}$; by the same argument, $S^{(r)}=S^{(r+1)}$ for every odd $r\geqslant 1$. On the other hand, $S^{(\infty)}=\{0\}$ by Cornut--Vatsal \cite{CV-doc}.
 
We first prove the implication ${\rm (ii)}\Rightarrow{\rm (i)}$. Suppose that $\dim_{\Qp}{\rm Sel}_{\rm str}(\bQ,V_pE)=1$. By $p$-parity \cite{nekovarII}, this implies that $\dim_{\Qp}{\rm Sel}(\bQ,V_pE)=2$ and the composite map 
\[
\log_{\omega_E,\pp}:S={\rm Sel}(K,V)\overset{\loc_\pp}\rightarrow\rmH^1_{\rm fin}(K_\pp,V)\overset{(\ref{log})}\rightarrow L
\] 
is nonzero. Under our hypotheses we have
\[ 
{\rm dim}_{L}S={\rm dim}_{\Q_p}{\rm Sel}(\bQ,V_pE)=2.
\] 
In view of \eqref{E:alt}, we thus find that $(\ref{eq:howard-fil})$ reduces to
\begin{equation}\label{eq:fil-Howard-red}
S=S^{(1)}=S^{(2)}=\cdots= S^{(r)}\quad\textrm{and}\quad S^{(r+1)}=\cdots= S^{(\infty)}=\{0\}
\end{equation}	
for some (even) integer $r\geqslant 2$ and the derived $p$-adic height $h^{(r)}$ is a non-degenerate pairing on $S^{(r)}$. 

Let $X_\infty$ 
be the Pontryagin dual of ${\rm Sel}_{p^\infty}(E/K_\infty)$, which is known to be  $\Lambda$-torsion \cite{bdIMC}. Let $J\subset\Lambda$ be the augmentation ideal, and fix a pseudo-isomorphism
\begin{equation}\label{eq:ps}
X_\infty\sim M\oplus M',\quad\textrm{with}\quad M\iso(\Lambda/J)^{e_1}\oplus(\Lambda/J^2)^{e_2}\oplus\cdots
\end{equation}
with $M'$ a torsion $\Lambda$-module having characteristic ideal prime to $J$. By \cite[Cor.~4.3(c)]{howard-derived} we have $e_i={\rm dim}_{L}(S^{(i)}/S^{(i+1)})$; letting  
$\mathcal{L}_p\in\Lambda$ be a generator of the principal ideal 
${\rm char}_\Lambda(X_\infty)$, combining $(\ref{eq:fil-Howard-red})$ and $(\ref{eq:ps})$ this shows that
\begin{equation}\label{eq:r}
{\rm ord}_{J}\mathcal{L}_p=2r.\nonumber
\end{equation}
On the other hand, by our hypotheses on $\bar\rho_{E,p}$ the divisibility in the Iwasawa main conjecture due to Skinner--Urban \cite{SU} (see [\emph{loc.cit.}, \S{3.6.3}]) implies that $(\Theta_{f/K}^2)\supset(\mathcal{L}_p)$, and so 
%from $(\ref{eq:r})$ we obtain
\begin{equation}\label{eq:ineq}
r\geqslant\rho:={\rm ord}_{J}(\Theta^{}_{f/K}).
\end{equation}
Let $\bar\theta_{f/K}$ be the \emph{leading coefficient} of $\Theta_{f/K}$ defined by \[
\bar\theta_{f/K}:=\Theta^{}_{f/K}(T)\pmod{J^{\rho+1}}\in J^\rho/J^{\rho+1}.
\]
From (\ref{eq:fil-Howard-red}) and (\ref{eq:r}) we see that $S=S^{(\rho)}$. Thus combining the derived $p$-adic height formula in Proposition~\ref{P:37.H}, Theorem~\ref{thm:theta-Euler}, and part (2) of Lemma~\ref{lem:ERL}, we deduce that for every $x\in S^{(\rho)}=S$ we have
\beq\label{E:1.Main}
h^{(\rho)}(\kappa_{\alpha,\alpha^{-1}},x)=\frac{1-p^{-1}\alpha_p}{1-\alpha_p^{-1}}\cdot\bar\theta_{f/K}\cdot\log_{\omega_E,\pp}(x)\cdot  C,
\eeq
where $\alpha_p$ is the $p$-adic unit root of $X^2-a_p(E)X+p=0$ and $C$ is a non-zero algebraic number with $C^2\in K(\chi,\al_p)^\x$. 
Since $\bar\theta_{f/K}\neq 0$ and as noted above our hypotheses imply that the map $\log_{\omega_E,\pp}$ is nonzero, we see that $r=\rho$ and the non-vanishing of $\kappa_{\alpha,\alpha^{-1}}$ follows. 
%from the non-degeneracy of the derived height $h^{(\rho)}$ on $S^{(\rho)}$.

Now we proceed to establish the implication ${\rm (i)}\Rightarrow{\rm (ii)}$. Suppose that $\kappa_{\al,\al^{-1}}\neq 0$.  We shall prove that $\dim_L S=2$ and $\log_{\omega_E,\pp}$ is a nonzero map. Consider again the filtration (\ref{eq:howard-fil}).
%\begin{equation}\label{eq:howard-fil-2}
%S=S^{(1)}\supset S^{(2)}\supset\cdots\supset S^{(r)}\supset S^{(r+1)}\supset\cdots\supset S^{(\infty)}=\{0\}.\nonumber
%\end{equation} 
%As in the proof of Theorem~A, from the nonvanishing of $L(E^K,1)$ we get $S=S^+$ and $S^{(r)}=S^{(r+1)}$ for every odd $r\geqslant 1$. Moreover, 
Then the combination of Proposition~\ref{P:37.H}, Theorem~\ref{thm:theta-Euler}, and part (2) of Lemma~\ref{lem:ERL} shows that the class $\kappa_{\alpha,\alpha^{-1}}$ belongs to $S^{(\rho)}$ with $\rho={\rm ord}_J(\Theta_{f/K})$; in particular, $S^{(\rho)}\neq 0$. With notations as in (\ref{eq:ps}), this implies that $e_{r_0}\neq 0$ for some \emph{even} $r_0\geqslant\rho$, and so $e_{r_0}\geqslant 2$ by (\ref{E:alt}). %\cite[Cor.~4.3(d)]{howard-derived}. 
On the other hand, we have \[2\rho\geqslant e_1+2e_2+\cdots+re_r+\cdots\] according to the divisibility in the Iwasawa main conjecture due to Bertolini--Darmon \cite{bdIMC} (see also \cite{pollack-weston}). This implies that $e_{\rho}=2$ and $e_r=0$ if $r\neq \rho$. We thus conclude that ${\rm dim}_{L}S=2$ and $h^{(\rho)}$ is non-degenerate on $S^{(\rho)}=S$. In light of $(\ref{E:1.Main})$, this shown that the map ${\rm log}_{\omega_E,\pp}$ is non-zero, yielding the proof of the implication ${\rm (i)}\Rightarrow{\rm (ii)}$. 
\end{proof}

The following is an immediate consequence of the height formula \eqref{E:1.Main}: \begin{cor}\label{C:51.M}The class $\kappa_{\alpha,\alpha^{-1}}\mod{\Qbar^\x}$ depends only on $K$, not on the auxiliary choice of ring class character $\chi$. Moreover, as elements in $E(\Q)\ot_{\Z}L$, we have
\[\kappa_{\alpha,\alpha^{-1}}=C\cdot \frac{1-p^{-1}\alpha_p}{1-\alpha_p^{-1}}\cdot\frac{\bar\theta_{f/K}}{h^{(\rho)}(P,Q)}\cdot\left(P\ot \log_pQ-Q\ot \log_pP\right)\]
for any basis $(P,Q)$ of $E(\Q)\ot_\Z \Q$.
\end{cor}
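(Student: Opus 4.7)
The plan is to derive the explicit formula as a direct consequence of the height formula \eqref{E:1.Main} established in the course of proving Theorem~A, combined with a short piece of linear algebra on a $2$-dimensional space. Under the equivalent conditions of Theorem~A, the proof just completed has shown that $\dim_L S=2$, that the filtration \eqref{eq:howard-fil} collapses to $S=S^{(\rho)}\supsetneq S^{(\rho+1)}=\{0\}$, and that $h^{(\rho)}$ is a non-degenerate pairing on $S$. Moreover, since $S=S^+$ and the symmetry \eqref{E:alt} of $h^{(r)}$ under $\tau$ forces $h^{(r)}$ to vanish on $S^+$ whenever $r$ is odd, the integer $\rho$ is necessarily even; the other sign in \eqref{E:alt} then shows that $h^{(\rho)}$ is \emph{antisymmetric} on $S$. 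In particular $h^{(\rho)}(P,P)=h^{(\rho)}(Q,Q)=0$ and $h^{(\rho)}(P,Q)\neq 0$.

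Writing $\kappa_{\alpha,\alpha^{-1}}=aP+bQ$ with $a,b\in L$ and evaluating \eqref{E:1.Main} at $x=Q$ and at $x=P$ (using $h^{(\rho)}(Q,P)=-h^{(\rho)}(P,Q)$ in the second instance) yields the pair of equations
\begin{align*}
a\cdot h^{(\rho)}(P,Q)\;&=\;\frac{1-p^{-1}\alpha_p}{1-\alpha_p^{-1}}\cdot\bar\theta_{f/K}\cdot\log_{\omega_E,\pp}(Q)\cdot C,\\
-b\cdot h^{(\rho)}(P,Q)\;&=\;\frac{1-p^{-1}\alpha_p}{1-\alpha_p^{-1}}\cdot\bar\theta_{f/K}\cdot\log_{\omega_E,\pp}(P)\cdot C.
\end{align*}
Solving for $a$ and $b$ and substituting back into $\kappa_{\alpha,\alpha^{-1}}=aP+bQ$ produces the displayed formula of the corollary, with $\log_p$ interpreted as $\log_{\omega_E,\pp}$. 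A routine check using $\log_p(aP+bQ)=a\log_pP+b\log_pQ$ and the bilinearity/antisymmetry of $h^{(\rho)}$ confirms that both the antisymmetric vector $P\otimes\log_pQ-Q\otimes\log_pP$ and the ratio involving $h^{(\rho)}(P,Q)$ transform compatibly under change of basis, so the right-hand side does not depend on the choice of $(P,Q)$.

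For the first assertion, I inspect the resulting formula: the direction vector $P\otimes\log_pQ-Q\otimes\log_pP\in E(\bQ)\otimes\overline{\bQ}_p$ and the scalar $(1-p^{-1}\alpha_p)(1-\alpha_p^{-1})^{-1}\bar\theta_{f/K}/h^{(\rho)}(P,Q)$ depend only on $E$, $K$, and $p$, since $\bar\theta_{f/K}$ is the leading term of the trivial-character theta element. The only factor involving $\chi$ is the constant $C$, which by Theorem~\ref{thm:theta-Euler} satisfies $C^2\in K(\chi,\alpha_p)^\times\subset\overline{\bQ}^\times$, hence $C\in\overline{\bQ}^\times$. Therefore the image of $\kappa_{\alpha,\alpha^{-1}}$ in $(E(\bQ)\otimes_\Z\overline{\bQ}_p)/\overline{\bQ}^\times$ depends only on $K$, as claimed.

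The main obstacle has already been overcome in the proof of Theorem~A, namely establishing the height formula \eqref{E:1.Main} itself—combining the derived height computation of Proposition~\ref{P:37.H}, the Euler system construction of theta elements in Theorem~\ref{thm:theta-Euler}, and the vanishing of $\mathrm{loc}_{\overline\pp}(\kappa(\breve f^\star,\breve\bfg^\star\breve\bfh^\star))$ from Lemma~\ref{lem:ERL}. Given that formula, the present corollary is an immediate consequence of non-degenerate antisymmetric linear algebra in dimension two.
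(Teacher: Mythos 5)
Your proof is correct and follows exactly the route the paper intends: the paper presents the corollary as ``an immediate consequence of the height formula \eqref{E:1.Main}'' and you have supplied precisely the linear algebra that justifies this. You correctly extracted from \eqref{E:alt} that $\rho$ is even (since $S=S^+$ and odd-index pairings vanish on $\tau$-invariants), that $h^{(\rho)}$ is therefore alternating on $S$, and then solved the two-by-two system obtained by plugging $x=P$ and $x=Q$ into \eqref{E:1.Main}, which is exactly what produces the displayed expression with $\log_p=\log_{\omega_E,\pp}$. The observation that only $C$ depends on $\chi$, together with $C\in\overline{\bQ}^\times$, gives the first assertion. Your remark on basis independence of the right-hand side is a useful sanity check though not strictly needed.
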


%\begin{rem}\label{R:51.M} Using the height formula \eqref{E:1.Main}, we immediately see that the class $\kappa_{\alpha,\alpha^{-1}}\mod{\Qbar^\x}$ depends only on $K$, not on the auxiliary choice of ring class character $\chi$, and that as elements in $E(\Q)\ot_{\Z}L$ we have
%\[\kappa_{\alpha,\alpha^{-1}}=C\cdot \frac{1-p^{-1}\alpha_p}{1-\alpha_p^{-1}}\cdot\frac{\bar\theta_{f/K}}{h^{(\rho)}(P,Q)}\cdot\left(P\ot \log_pQ-Q\ot \log_pP\right)\]
%for $(P,Q)$ any basis of $E(\Q)\ot_{\Z}\bQ$.
%\end{rem}

This suggests that the refined conjecture \cite[Conj.~3.12]{DR2.5} in this case should follow from the $p$-adic Birch--Swinnerton-Dyer formula of \cite[Conj.~4.3]{BDmumford-tate}.

\section{Numerical examples}\label{S:example}
%!TEX root = Kato.tex
In this section, we give examples of elliptic curves of rank $2$ having non-trivial generalized Kato classes. To be more precise, we consider elliptic curves $E/\Q$ with 
\[
\Ord_{s=1}L(E,s)=\rank_{\Z}E(\Q)=2
\] 
and conductor $N\in\{q,2q\}$ with $q$ an odd prime. We take a square-free integer $-\Delta<0$ such that $K=\Q(\sqrt{-\Delta})$ has class number one, $q$ is inert in $K$, and $L(E^K,1)\neq 0$, and take a prime $p>3$ of good ordinary prime for $E$ which is split in $K$ and such that $E[p]$ is an  irreducible $G_{\bQ}$-module. %In the following tables, 
For every triple $(E,p,-\Delta)$, letting $f\in S_2(\Gamma_0(N))$ be the newform associated to $E$, we give numerical examples where the associated theta element 
\[
\Theta_{E/K}(T)=\Theta_{f/K}(T)\in\Z_p\dBr{T}
\] 
vanishes to order exactly $2$ at $T=0$. When that is the case, by the work of Bertolini--Darmon \cite{bd-117,bdIMC} 
on the anticyclotomic Iwasawa main conjecture\footnote{As extended by Pollack--Weston \cite{pollack-weston} to allow for weaker hypotheses.} (see  \cite[Cor.~3]{bdIMC}), 
it follows that $\Sha(E/K)[p^\infty]$ is finite. Moreover, 
the residual representation 
$E[p]$ must ramify at $N^-=q$ by \cite[Thm.~1.1]{ribet-eps} and we checked that $E[p]$ is irreducible, either by \cite{mazur-prime} for $p\geqslant 11$ or checking that the elliptic curves we consider have no rational $m$-isogeny for $m>3$ according to Cremona's tables. Thus for every ring class character $\chi$ with $L(E/K,\chi^2,1)\neq 0$ (as one can always find by virtue of \cite[Thm.~1.4]{vatsal-special}, as extended in \cite[Thm.~D]{ChHs1}), the examples below provide instances where the generalized Kato class $\kappa_{E,K}$ is a nonzero class in the $2$-dimensional ${\rm Sel}(\bQ,V_pE)$ by virtue of Corollary~B. 

To explain these numerical examples, we prepare some notation. Let $B/\mathbf{Q}$ be the definite quaternion algebra of discriminant $q$. Let $R$ be an Eichler order of level $N/q$ and ${\rm Cl}(R)$ be the class group of $R$. Let $f_E:{\rm Cl}(R)\to\Z$ be the ($p$-adically normalized) Hecke eigenfunction associated with $f$ by the Jacquet--Langlands correspondence. Fix an optimal embedding $\cO_K\hookto R$ and an isomorphism $i_p:R\ot\Zp\iso {\rm M}_2(\Zp)$ such that $i_p(K)$ lies in the subspace of diagonal matrices. For $a\in\Zp^\x$ and an integer $n$, put \[r_n(a)=i_p^{-1}(\begin{pmatrix} 1&ap^{-n}\\ 0&1\end{pmatrix})\in \wh B^\x,\quad \wh B:=B\ot_\Z\wh\Z.\]Consider the sequence $\stt{P^a_n}_{n=0,1,\ldots}$ of right $R$-ideals defined by $P_n^a:=(r_n(a)\wh R)\cap B$. The images of these ideals $P_n^a$ in $\Cl(R)$ are usually referred to Gross points of level $p^n$. Letting $\bfu=1+p$, we
%Then ${\rm Art}: \cO_K^\x\bksl \wh\cO_K^\x/\wh\cO_{p^n}\iso \Gal(H_{p^n}/K)$.
%Fix a compatible sequence $\{P_n\}$ of Gross points of level $p^n$ \cite[\S 2.4]{BDmumford-tate}in ${\rm Cl}(R)$ with $P_0$ the trivial ideal class represented by $R$.  Let $H_{p^n}/K$ be the ring class field of conductor $p^n$ and $H_{p^\infty}=\cup_{n=0}^\infty H_{p^n}$. Fix a topological generator $\gen$ of $\Gal(H_{p^\infty}/K)$, which yields  $\Gal(H_{p^{n+1}}/H_p)=(\Z/p^n\Z)\gen$. 
define the $n$-th theta element $\Theta_{E/K,n}(T)\in\Z_p[T]$ by 
\[\Theta_{E/K,n}(T):=\frac{1}{\al_p^{n+1}}\sum_{i=0}^{p^n-1}\sum_{a\in \mu_{p-1}}\left(\al_p\cdot f_E(P_{n}^{a\bfu^i})-f_E(P_{n+1}^{a\bfu^i})\right)(1+T)^{i}.\]
%so that \[
%\Theta_{E/K}=\prolim_n\Theta_{E/K,n}.
%\] 
By the definition of theta elements in \cite[\S 2.7]{BDmumford-tate}, if $K$ has class number one, we then have 
\begin{align*}
\Theta_{E/K}(T)&\con\Theta_{E/K,n}(T)\pmod{(1+T)^{p^n}-1}.%\\
%&=\sum_{i=0}^{p^n-1}\sum_{\tau\in \Gal(H_p/K)}f_E(\gen^i\tau P_{n+1}^\dagger)(1+T)^{i}\pmod{(1+T)^{p^n}-1}
\end{align*}
%with $H_p$ the ring class field of $K$ of conductor $p$.
Since $(p^n,(1+T)^{p^n}-1)\subset (p^n,T^p)$ and $p>2$, to check the vanishing $\Theta_{E/K}(T)$ to exact order $2$ at $T=0$, it suffices to compute $\Theta_{E/K,n}(T)$ for sufficiently large $n$. The following examples were obtained by implementing the Brandt module package in SAGE.
\begin{table}[htbp!]
	%\begin{footnotesize}
		\centering
\begin{tabu}
{ | X[3,c] | X[1,c] | X[2,c] | X[30,l] | }
\hline
$E$ & $p$&$-\Delta$ &$\Theta_{E/K,2}(T)\;{\rm mod}\;{(p^2,T^p)}$\\%[0.5em]
\hline\hline
389a1&$11$&$-2$&$10T^2+69T^3+T^4+103T^5 +106T^6+66T^7+ 11T^8+ 55T^9+110T^{10}$ 
%120T^2 + 58T^3 + 78T^4 + 59T^5 + 40T^6
\\
433a1&$11$&$-7$&$88T^2+22T^3+86T^4+7T^5+10T^6+12T^7+29T^8+88T^9+48T^{10}$\\
446c1&7&$-3$&$22T^2 + 27T^3 + 3T^4 + 16T^5 + 11T^6$\\
563a1&$5$&$-1$&$18T^2+9T^3+5T^4$\\
643a1&$5$&$-1$&$T^2+21T^4$\\
709a1&11&$-2$&$27T^2 + 114T^3 + 3T^4 + 14T^5 + 36T^6 + 15T^7 + 42T^8 + 44T^9 + 91T^{10}$\\
718b1&5&$-19$&$3T^2 + 20T^3 + 12T^4$\\
794a1&7&$-3$&$47T^2 + 23T^3 + 8T^4 + 24T^5 + 7T^6$\\
997b1&$11$&$-2$&$71T^2 + 41T^3 + 83T^4 + 19T^5 + 114T^6 + 111T^7 + 101T^8 + 46T^9 + 102T^{10}$\\
997c1&$11$&$-2$&$54T^2 + 38T^3 + 36T^4 + 81T^5 + 82T^6 + 18T^7 + 72T^8 + 95T^9 + 4T^{10}$\\
1034a1&$5$&$-19$&$22T^2 + 4T^3 + 6T^4$\\
1171a1&$5$&$-1$&$6T^2+6T^3+20T^4$\\
1483a1&$13$&$-1$&$128T^2+148T^3+ 127T^4 +162T^5+30T^6+149T^7+ 141T^8+97T^9+49T^{10}+13T^{11} +29T^{12}$\\
1531a1&$5$&$-1$&$16T^2+7T^3+21T^4$\\
1613a1&$17$&$-2$&$128T^2 + 165T^3 + 224T^4 + 287T^5 + 140T^6 + 211T^7 + 147T^8 + 160T^9 + 59T^{10} + 122T^{11} + 195T^{12} + 43T^{13} + 207T^{14} + 214T^{15} + 285T^{16} $\\
1627a1&$13$&$-1$&$101T^2+151T^3+58T^4+104T^5+3T^6+165T^7+128T^8+63T^9+17T^{10}+55T^{11}+166T^{12}$\\
1907a1&$13$&$-1$&$72T^2+131T^3+32T^4+142T^5+84T^6+104T^7+90T^8+105T^9+38T^{10}+92T^{11}+116T^{12}$\\
1913a1&$7$&$-3$&$41T^2 + 16T^3 + 28T^4 + 23T^5 + 14T^6 $\\
2027a1&$13$&$-1$&$54T^2 + 128T^3 + 65T^4 + 93T^5 + 83T^6 + 161T^7 + 113T^8 + 133T^9 + 49T^{10} + 151T^{11} + 13T^{12}$ \\
\hline
\end{tabu}
%\end{footnotesize}
\end{table}
%\vspace{.3cm}
%\caption{Examples of $\Theta_{E/K,2}(T)\;{\rm mod}\;{(p^2,T^p)}$.}
%\label{}

\begin{table}[htbp!]
%	\begin{footnotesize}
		\centering
		\begin{tabu} { | X[3,c] | X[1,c] | X[2,c] | X[30,l] | }
			\hline
			$E$ &$p$ & $-\Delta$& $\Theta_{E/K,3}(T)\;{\rm mod}\;{(p^3,T^p)}$\\%[0.5em]
			\hline\hline
			571b1&5&$-1$&$100T^2+100T^3+15T^4$\\
			1621a1&11&$-2$&$1089T^2+807T^4+ 986T^5+ 586T^6+ 1098T^7+ 772T^8+ 228T^9 +1296T^{10}$\\
			\hline
		\end{tabu}
%	\end{footnotesize}
	%\vspace{.3cm}
	%\caption{Examples for $\Theta_{E/K,3}(T)\;{\rm mod}\;{(p^3,T^p)}$. }
	%\label{}
\end{table}

%\vskip 12cm

%\begin{table}[htbp!]
%\begin{footnotesize}
%\centering
%\begin{tabu} { | X[3,c] | X[1,c] | X[2,c] | X[30,l] | }
%\hline
%$E$ & $p$ &$-\Delta$& $\Theta_{E/K,2}(T)\;{\rm mod}\;{(p^2,T^p)}$\\%[0.5em]
%\hline\hline
%446c1&7&$-3$&$22T^2 + 27T^3 + 3T^4 + 16T^5 + 11T^6$\\
%718b1&5&$-19$&$3T^2 + 20T^3 + 12T^4$\\
%794a1&7&$-3$&$47T^2 + 23T^3 + 8T^4 + 24T^5 + 7T^6$\\
%1034a1&$5$&$-19$&$22T^2 + 4T^3 + 6T^4$\\
%\hline
%\end{tabu}
%\end{footnotesize}
%\vspace{.3cm}
%\caption{Examples for $\Theta_{E/K,2}(T)\;{\rm mod}\;{(p^2,T^p)}$ and $N^-=2$. }
%\label{}
%\end{table}

%\newpage
% ----------------------------------------------------------------
\bibliographystyle{amsalpha}
\bibliography{Kato-refs}

\end{document}